\newtheorem{theorem}{Theorem}[section]
\newtheorem{lemma}[theorem]{Lemma}
\newtheorem{proposition}[theorem]{Proposition}
\newtheorem{corollary}[theorem]{Corollary}
\newtheorem{remark}[theorem]{Remark}
\newtheorem{example}[theorem]{Example}
\newtheorem{assumption}[theorem]{Assumption}
\DeclareMathOperator{\Id}{Id}
\DeclareMathOperator{\Fix}{Fix}
\DeclareMathOperator{\prox}{prox}
\DeclareMathOperator{\gra}{gra}
\DeclareMathOperator{\zer}{zer}
\DeclareMathOperator{\dom}{dom}
\DeclareMathOperator{\sri}{sri}
\DeclareMathOperator*{\argmin}{argmin}
\DeclareMathOperator{\cone}{cone}
\DeclareMathOperator{\iso}{iso}
\newcommand{\setto}{\rightrightarrows}
\title{Linear Convergence of Resolvent Splitting with Minimal Lifting and its Application to a Primal-Dual Algorithm}
\author[*]{Farhana A. Simi} 
\author[*]{Matthew K. Tam}
\affil[*]{School of Mathematics and Statistics, University of Melbourne, Parkville VIC 3010, Australia. Email: \href{mailto:fsimi@student.unimelb.edu.au}{fsimi@student.unimelb.edu.au}, \href{mailto:matthew.tam@unimelb.edu.au}{matthew.tam@unimelb.edu.au}}
\begin{document}

\maketitle

\begin{abstract}
We consider resolvent splitting algorithms for finding a zero of the sum of finitely many maximally monotone operators. The standard approach to solving this type of problem involves reformulating as a two-operator problem in the product-space and applying the Douglas--Rachford algorithm. However, existing results for linear convergence cannot be applied in the product-space formulation due to a lack of appropriate Lipschitz continuity and strong monotonicity. In this work, we investigate a different approach that does not rely on the Douglas--Rachford algorithm or the product-space directly. We establish linear convergence of the ``resolvent splitting with minimal lifting" algorithm due to Malitsky \& Tam for monotone inclusions with finitely many operators. Our results are then used to derive linear convergence of a primal-dual algorithm for convex minimization problems involving infimal convolutions. The theoretical results are demonstrated on numerical experiments in image denoising.

\end{abstract}
\paragraph*{Keywords.} Resolvent splitting, linear convergence, Lipschitz continuity, strong monotonicity, image denoising
\paragraph*{MSC2020.} 47H05, 49M27, 65K10, 90C30
\section{Introduction}
Let $\mathcal{H}$ be a real Hilbert space. In this work, we consider the monotone inclusion problem given by
\begin{equation} \label{eq:1n}
   \text{find } x\in\mathcal{H} \text{ such that } 0\in\sum_{i=1}^{n}A_{i}(x)\subseteq\mathcal{H},
   %\text{find}~x\in H~\text{such that}~0\in\sum_{i=1}^{n}A_{i}x,
\end{equation}
where the (set-valued) operator $A_{i}:\mathcal{H} \setto \mathcal{H}$ is maximally monotone for all $i\in \{1,\dots,n\}$. The setting of problem~\eqref{eq:1n} is quite general and includes many fundamental problems that arise in mathematical optimization such as nonsmooth minimization~\cite{bagirov2014introduction,{rockafellar1970monotone},{rockafellar1997convex}}, variational inequalities~\cite{marcotte1995convergence,{rockafellar1976monotone},tam2023bregman}, and fixed point problems \cite{eckstein1992douglas,lions1979splitting,setzer2009split}. 
Of particular interest for this work is the following convex minimization problem involving infimal convolution. 

\begin{example}\label{example 1.1}
Let $\mathcal{H}_{1} \text{ and } \mathcal{H}_{2}$ be real Hilbert spaces. Suppose $C:\mathcal{H}_{1}\rightarrow\mathcal{H}_{2}$ is bounded and linear, $f_{i}:\mathcal{H}_{1}\rightarrow\mathbb{R}$ is convex and differentiable with Lipschitz continuous gradient for $i=2,\dots,n-1$, $f_{n}:\mathcal{H}_{1}\rightarrow(-\infty,+\infty]$ is proper, closed and strongly convex, $g_{i}:\mathcal{H}_{2}\rightarrow(-\infty,+\infty]$ is proper, closed and strongly convex for $i=2,\dots,n-1$, and $g_{n}:\mathcal{H}_{2}\rightarrow\mathbb{R}$ is convex and differentiable with Lipschitz continuous gradient. Consider the minimization problem
\begin{equation} \label{convex optimization problem intro}
    \min_{u\in\mathcal{H}_{1}}\quad \sum_{i=2}^{n}f_{i}(u)+(g_{2}\Box\cdot\cdot\cdot\Box g_{n})(Cu),
\end{equation}
where $(g_{2}\Box\cdot\cdot\cdot\Box g_{n})$ denotes the infimal convolution of $g_{2},\dots,g_{n}$. The first order optimality condition for \eqref{convex optimization problem intro} can be expressed as the monotone inclusion
\begin{equation} \label{monotone inclusion n=2*}
    \text{find }\begin{pmatrix}
        u\\v
    \end{pmatrix}\in\mathcal{H}_{1}\times\mathcal{H}_{2}\text{ such that }\begin{pmatrix}
        0\\0
    \end{pmatrix}\in\begin{pmatrix}
        0&C^*\\-C&0
    \end{pmatrix}\begin{pmatrix}
        u\\v
    \end{pmatrix}+\sum_{i=2}^{n-1}\begin{pmatrix}
        \nabla f_{i}(u)\\\nabla g_{i}^*(v)\end{pmatrix}+\begin{pmatrix}
        \partial f_{n}(u)\\\partial g^*_{n}(v)
    \end{pmatrix},
\end{equation}
where $f^*_{i}$ and $g^*_{i}$ denote conjugates of $f_{i}$ and $g_{i}$ respectively for $i=2,\dots,n$. The inclusion problem~\eqref{monotone inclusion n=2*} is in the form of~\eqref{eq:1n}  with
\begin{equation*} \label{monotone operators}
    \mathcal{H}=\mathcal{H}_1\times\mathcal{H}_{2},\quad A_{1}=\begin{pmatrix}
        0&C^*\\-C&0
    \end{pmatrix}, \quad A_{i}=\begin{pmatrix}
        \nabla f_{i}\\ \nabla g_{i}^*\end{pmatrix},\quad 
        A_{n}=\begin{pmatrix}
        \partial f_{n}\\ \partial g_{n}^*
    \end{pmatrix},
\end{equation*}
where $i=2,\dots,n-1$.
\end{example}

\medskip

\emph{Resolvent splittings} are a family of algorithms that can be used to solve~\eqref{eq:1n}. These work by invoking each operator in~\eqref{eq:1n} individually, through their resolvents, rather than using the whole sum directly. Recall that the resolvent of a maximally monotone operator $A$ is the operator $J_{A}:\mathcal{H}\rightarrow\mathcal{H}$ defined as $J_{A}=(\Id+A)^{-1}$~\cite[Corollary]{minty1962monotone}. A well known example of a resolvent splitting, which solves the monotone inclusion problem \eqref{eq:1n} when $n=2$, is the \emph{Douglas--Rachford algorithm}~\cite{{lions1979splitting},{svaiter2011weak}}. Let $T_{\rm DR}:\mathcal{H}\rightarrow\mathcal{H}$ and ${z}^{0}\in \mathcal{H}$, this algorithm can be described in terms of the iteration
\begin{equation} \label{eq:4n}
{z}^{k+1}=T_{\rm DR}({z}^k):={z}^k+J_{A_{2}}(2J_{A_{1}}({z}^k)-{z}^k)-J_{A_{1}}({z}^k) \quad \forall k\in\mathbb{N}.
\end{equation}
The sequence $({z}^k)_{k\in \mathbb{N}}$ given by \eqref{eq:4n} converges weakly to a point ${z}\in \mathcal{H}$ with $z=T_{\rm DR}(z)$, and the \emph{shadow sequence} $\bigl(J_{A_{1}}({z}^k)\bigr)_{k\in \mathbb{N}}$ converges weakly to $J_{A_{1}}(z)$, which is a solution of \eqref{eq:1n}, see \cite[Theorem~1]{svaiter2011weak} and \cite[Theorem~2.3]{svaiter2019simplified}. Further, if one operator is Lipschitz continuous and the other is strongly monotone, then the result can be refined --- both sequences can be shown to converge linearly, see~\cite[Theorem~4.3]{moursi2019douglas} and \cite[Corollary~4.10 \& Remark~4.11]{dao1809adaptive}. Linear convergence of the Douglas--Rachford algorithm has also been established in a number of important, but specialized, settings of~\eqref{eq:1n} including where the operators are assumed to be subdifferentials~\cite{giselsson2016linear,giselsson2017tight} or normal cones~\cite{bauschke2016optimal,bauschke2014rate,bauschke2016douglas,hesse2013nonconvex,hesse2014alternating,phan2016linear}. 

The standard way to solve \eqref{eq:1n} for more than $n>2$ operators involves using the Douglas--Rachford algorithm applied to a two operator reformulation in the product space $\mathcal{H}^n$. Precisely, 
\begin{equation}\label{product space DR}
    \text{find }\mathbf{x}=(x,\dots,x)\in \mathcal{H}^n \text{ such that } 0\in (A+N_{\Delta_{n}})(\mathbf{x})\subseteq \mathcal{H}^n,
\end{equation}
where $A=(A_{1},\dots, A_{n})$,  $N_{\Delta_{n}}$ denotes the normal cone to the \emph{diagonal subspace} $\Delta_{n}:=\{\mathbf{x}=(x_{1},\dots, x_{n})\in \mathcal{H}^n: x_{1}=\dots= x_{n}\}$. Any solution $\mathbf{x}=(x,\dots,x)$ of \eqref{product space DR} is necessarily contained in $\Delta_n$ with $x$ a solution to \eqref{eq:1n}, and vice versa. However, many of the existing results for linear convergence of the Douglas--Rachford algorithm do not apply to \eqref{product space DR} as the normal cone $N_{\Delta_{n}}$ is neither Lipschitz continuous nor strongly monotone.

This study aims to establish linear convergence of the ``resolvent splitting algorithm with minimal lifting" due to Malitsky and Tam~\cite{malitsky2023resolvent}. This algorithm does not rely on a product space formulation in solving the inclusion problem~\eqref{eq:1n}. Let $T_{\rm MT}:\mathcal{H}^{n-1}\rightarrow\mathcal{H}^{n-1}$, $\mathbf{z}^0=(z_{1}^0,\dots, z_{n-1}^0)\in \mathcal{H}^{n-1}$, and $\gamma\in(0, 1)$, this algorithm can be described in terms of the iteration
\begin{equation}\label{eq:1}
    \mathbf{z}^{k+1}=T_{\rm MT}(\mathbf{z}^k)=\mathbf{z}^k+\gamma\begin{pmatrix}
x_{2}^{k}-x_{1}^{k}\\x_{3}^{k}-x_{2}^{k}\\\vdots \\x_{n}^{k}-x_{n-1}^{k}
\end{pmatrix},
\end{equation}
where $\mathbf{x}^k=(x_{1}^k,\dots,x_{n}^{k})\in\mathcal{H}^{n}$ depends on $\mathbf{z}=(z_{1}^k, \dots, z_{n-1}^k)\in \mathcal{H}^{n-1}$ and is given by\\
\begin{equation} \label{eq:2}
\left\{\begin{aligned} 
x_{1}^k &=J_{A_{1}}(z_{1}^k)\\
x_{i}^k &=J_{A_{i}}(z_{i}^k+x_{i-1}^k-z_{i-1}^k)&\forall i\in \{2,\dots,n-1\}  \\
x_{n}^k &=J_{A_{n}}(x_{1}^k+x_{n-1}^k-z_{n-1}^k).
\end{aligned}\right.
\end{equation}
The sequence $(\mathbf{z}^k)_{k\in\mathbb{N}}$ given by~\eqref{eq:1} converges weakly to a point $\mathbf{z}^*\in\mathcal{H}^{n-1}$ with $\mathbf{z}^*=T_{\rm MT}(\mathbf{z^*})$, and the shadow sequence $(\mathbf{x}^k)_{k\in\mathbb{N}}$ converges weakly to a point $(x,\dots,x)\in\mathcal{H}^n$ with $x=J_{A_{1}}(z_{1})$, which is a solution of \eqref{eq:1n}, see \cite[Theorem 4.5]{malitsky2023resolvent}. Although this algorithm is known to converge linearly for affine feasibility problems~\cite{bauschke2023splitting}, linear convergence in the setting of \eqref{eq:1n} has not been previously studied. In this work, we address this by establishing linear convergence of this algorithm when applied to the inclusion problems~\eqref{eq:1n}. 

The remainder of this paper is structured as follows. In Section~\ref{s: prel}, we recall the preliminaries needed for our analysis. In Section~\ref{s:resolvent splitting}, we present our main result (Theorem~\ref{theorem for linear convergence}) concerning linear convergence of the ``resolvent splitting with minimal lifting" algorithm \cite{malitsky2023resolvent} for problem~\eqref{eq:1n} with $n\geq2$. When specialized to $n=2$ operators, our result generalizes the findings presented in~\cite{moursi2019douglas}. 
In Section~\ref{s: section 4}, we apply the results of Section~\ref{s:resolvent splitting} to derive linear convergence of a primal-dual algorithm for the convex minimization problem with infimal convolution given in Example~\ref{example 1.1}. In Section~\ref{s: Experiment}, we present experimental results on image denoising which are supported by our findings. Finally, Section~\ref{s: conclusions} concludes by outlining future directions and open question for future research.

\section{Preliminaries}\label{s: prel}
Throughout this paper, $\mathcal{H}$ denotes a real Hilbert space with inner product $\langle\cdot,\cdot\rangle$ and induced norm $\|\cdot\|$. A \emph{set-valued} operator, denoted $A:\mathcal{H}\setto \mathcal{H}$, maps each point $x\in \mathcal{H}$ to a set $A(x)\subseteq \mathcal{H}$. When $A$ is \emph{single-valued} (\emph{i.e.,}~$A(x)$ is a singleton for all $x\in\mathcal{H})$, we write $A:\mathcal{H}\rightarrow\mathcal{H}$. The \emph{graph}, the set of \emph{fixed points} and the set of \emph{zeros} of the operator $A\colon\mathcal{H}\setto\mathcal{H}$ are defined by
$\gra A:=\{(x,u)\in \mathcal{H}\times\mathcal{H}:u\in A(x)\}, \Fix A:=\{x\in \mathcal{H}:x\in A(x)\}$, and $\zer A:=\{x\in \mathcal{H}:0\in A(x)\}$ respectively. The \emph{identity operator} is denoted by $\Id:\mathcal{H}\rightarrow \mathcal{H}$.

An operator $A:\mathcal{H}\setto\mathcal{H}$ is $\mu$-\emph{monotone} if
$$\langle x-y,u-v\rangle\geq\mu\|x-y\|^2\quad \forall (x,u),(y,v)\in \gra A,$$
and it is \emph{maximally $\mu$-monotone}, if there exists no $\mu$-monotone operator $B:\mathcal{H}\setto\mathcal{H}$ such that $\gra B$ properly contains $\gra A$. Depending on the sign of $\mu$, we say $A$ is monotone if $\mu=0$ and $A$ is $\mu$-\emph{strongly monotone} if $\mu>0$. 

A single-valued operator $B:\mathcal{H}\rightarrow\mathcal{H}$ is $\beta$-\emph{Lipschitz}, with $\beta\geq0$, if $$\|B(x)-B(y)\|\leq\beta\|x-y\|\quad \forall (x,y)\in\mathcal{H},$$
and a $\beta$-Lipschitz operator with $\beta\in[0,1)$ is said to be a \emph{$\beta$-contraction}. A $1$-Lipschitz operator is said to be \emph{nonexpansive}. 

The \emph{resolvent} of an operator $A:\mathcal{H}\setto\mathcal{H}$ is defined as $J_{A}:=(\Id+A)^{-1}$. The following proposition summarises its key properties in the presence of monotonicity.
\begin{proposition}\label{nonexpansiveness}
    Let $A:\mathcal{H}\setto\mathcal{H}$ be maximally monotone operator. Then the resolvent $J_{A}$ is single-valued with full domain and satisfies 
    $$ \|J_{A}(x)-J_{A}(y)\|^2+\|(\Id-J_{A})(x)-(\Id-J_{A})(y)\|^2\leq\|x-y\|^2\quad\forall (x,y)\in\mathcal{H}.$$
    In particular, $J_A$ is a nonexpansive.
\end{proposition}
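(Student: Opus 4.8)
The plan is to verify the three assertions in order: single-valuedness, full domain, and the firmly nonexpansive inequality (with ordinary nonexpansiveness as an immediate consequence of the last).

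First I would prove single-valuedness directly from monotonicity. Suppose $p,q\in J_A(x)$ for some $x\in\mathcal{H}$; by definition of the resolvent this means $x-p\in A(p)$ and $x-q\in A(q)$, so $(p,x-p)$ and $(q,x-q)$ both lie in $\gra A$. Monotonicity of $A$ then gives $\langle p-q,(x-p)-(x-q)\rangle\geq 0$, i.e.\ $-\|p-q\|^2\geq 0$, forcing $p=q$. For the full domain claim, I would invoke Minty's theorem (already cited in the excerpt), which states that maximal monotonicity of $A$ is equivalent to surjectivity of $\Id+A$; hence $\ran(\Id+A)=\mathcal{H}$ and $J_A=(\Id+A)^{-1}$ is defined on all of $\mathcal{H}$.

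The core computation is the firmly nonexpansive estimate. Fix $x,y\in\mathcal{H}$ and set $p:=J_A(x)$, $q:=J_A(y)$. As above, $x-p\in A(p)$ and $y-q\in A(q)$, so monotonicity yields $\langle p-q,(x-p)-(y-q)\rangle\geq 0$. Writing $a:=p-q=J_A(x)-J_A(y)$ and $b:=(x-p)-(y-q)=(\Id-J_A)(x)-(\Id-J_A)(y)$, we have $a+b=x-y$ and $\langle a,b\rangle\geq 0$, whence
\[
\|x-y\|^2=\|a+b\|^2=\|a\|^2+2\langle a,b\rangle+\|b\|^2\geq\|a\|^2+\|b\|^2,
\]
which is exactly the claimed inequality. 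Nonexpansiveness of $J_A$ follows at once by discarding the nonnegative term $\|(\Id-J_A)(x)-(\Id-J_A)(y)\|^2$.

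I do not anticipate any real obstacle here: the argument is entirely elementary once Minty's theorem is available for the domain statement, and the main inequality is a one-line expansion of $\|x-y\|^2$ using the monotonicity of $A$ evaluated at the pairs $(J_A(x),x-J_A(x))$ and $(J_A(y),y-J_A(y))$. The only point requiring care is bookkeeping the identities $a+b=x-y$ and the correspondence of $a,b$ with $J_A$ and $\Id-J_A$, respectively.
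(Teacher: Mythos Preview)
Your argument is correct in every detail: single-valuedness follows from monotonicity exactly as you write, full domain from Minty's surjectivity theorem, and the displayed inequality from expanding $\|x-y\|^2=\|a+b\|^2$ with $\langle a,b\rangle\geq 0$.

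The only difference from the paper is that the paper does not prove this proposition at all; it simply cites \cite[Corollary~23.10]{bauschke2011convex}. Your write-up is the standard self-contained proof one would find behind that citation, so the approaches are not genuinely different in substance---you have just unpacked what the reference contains. If self-containment is desired, your version is preferable; if brevity is the goal, the citation suffices.
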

\begin{proof}
    See \cite[Corollary~23.10]{bauschke2011convex}.
\end{proof}
The following theorem will be important for establishing linear convergence. Recall that a sequence $({z}^k)_{k\in\mathbb{N}}$ is said to converge \emph{$R$-linearly} to a point $z\in\mathcal{H}$ if there exists $c\in\mathbb{R}_+$ and $r\in[0,1)$ such that $\|{z}^{k}-{z}\|\leq cr^k$ for all $k\in\mathbb{N}$.
\begin{theorem}[\emph{Banach fixed-point theorem}]\label{Banach Theorem}
    Let $T:\mathcal{H}\rightarrow\mathcal{H}$ be $\beta$-contraction. Given $z^0\in\mathcal{H}$, define a sequence $(z^k)_{k\in\mathbb{N}}$ according to $$z^{k+1}=T(z^k) \quad \forall k\in\mathbb{N}.$$
    Then there exists $z\in\mathcal{H}$ such that the following hold:
    \begin{enumerate}[(i)]
        \item  $z$ is the unique fixed point of $T$.
        \item $\|z^k-z\|\leq\beta^k\|z^0-z\|$ for all 
        $k\in\mathbb{N}$.
        %\item (a priori error estimate) $\|z^k-z\|\leq\frac{\beta^k}{1-\beta}\|z^0-z^1\|$ for all $k\in\mathbb{N}$.
    \end{enumerate}
In particular, the sequence $(z^k)_{k\in\mathbb{N}}$ converges $R$-linearly to $z$. 
\end{theorem}
\begin{proof}
    See \cite[Theorem 1.48]{bauschke2011convex}.
\end{proof}

Given a function $f:\mathcal{H}\rightarrow[-\infty,+\infty]$, we say $f$ is \emph{proper}, if $-\infty\notin f(\mathcal{H})$ and $\dom f:=\{x\in\mathcal{H}:f(x)<+\infty\}\neq\emptyset$. We say $f$ is \emph{lower semi-continuous (lsc)} at $\Bar{x}\in\mathcal{H}$ if $$\liminf_{x\rightarrow\bar{x}}f(x)\geq f(\Bar{x}),$$ and say it is \emph{lower semi-continuous (lsc)}, if it is lsc at every point in $\mathcal{H}$. 
A function $f$ is \emph{convex}, if
    $$f((1-\lambda)x+\lambda y)\leq\lambda f(x)+(1-\lambda)f(y) \quad \forall (x,y)\in\mathcal{H},\quad \lambda\in(0,1),$$
and $f$ is $\alpha$-\emph{strongly convex}, with $\alpha>0$, if $f-\frac{\alpha}{2}\|\cdot\|^2$ is convex. 
The \emph{conjugate (Fenchel conjugate)} of $f$ is the function $f^*:\mathcal{H}\rightarrow[-\infty,+\infty]$ defined by
$$f^*(u)=\sup_{x\in\mathcal{H}}(\langle x,u\rangle-f(x)).$$
The \emph{infimal convolution} of $f_{1},\dots, f_{n}:\mathcal{H}\rightarrow(-\infty,+\infty]$  is the function $(f_{1}\Box\cdots\Box f_{n}):\mathcal{H}\rightarrow[-\infty,+\infty]$ defined by
\begin{equation}\label{infimal convolution}
    (f_{1}\Box\cdots\Box f_{n})(u)=\inf_{(v_{1},\dots,v_{n})\in\mathcal{H}\times\dots\times\mathcal{H}}\{f_{1}(v_{1})+\cdots+f_{n}(v_{n}):u=v_{1}+\dots+v_{n}\}.
\end{equation}
and it is said to be \emph{exact} at a point $u\in\mathcal{H}$, if the infimum in \eqref{infimal convolution} is attained. The following two proposition explore properties of the infimal convolution.

\begin{proposition}\label{remark infimal convolution}
    Suppose $f_{1},\dots,f_{n}:\mathcal{H}\rightarrow(-\infty,+\infty]$ are proper convex functions. Then $$(f_{1}\Box\cdots\Box f_{n})^*=f^*_{n}+\dots+f^*_{n}.$$
\end{proposition}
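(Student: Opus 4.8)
The plan is to prove this by a direct computation straight from the definitions of the Fenchel conjugate and the infimal convolution, with no appeal to duality theorems; the only fact I need beyond the definitions is the elementary identity $-\inf_{s\in S}h(s)=\sup_{s\in S}\bigl(-h(s)\bigr)$ in $[-\infty,+\infty]$. (I read the asserted equality as $(f_{1}\Box\cdots\Box f_{n})^* = f_1^* + \cdots + f_n^*$.)

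First I would fix $u\in\mathcal{H}$, unfold the conjugate of the infimal convolution, and convert the inner infimum into a supremum:
\begin{align*}
    (f_{1}\Box\cdots\Box f_{n})^*(u)
    = \sup_{x\in\mathcal{H}}\Bigl(\langle x,u\rangle - \inf_{v_{1}+\cdots+v_{n}=x}\sum_{i=1}^{n}f_{i}(v_{i})\Bigr)
    = \sup_{x\in\mathcal{H}}\ \sup_{v_{1}+\cdots+v_{n}=x}\Bigl(\langle x,u\rangle - \sum_{i=1}^{n}f_{i}(v_{i})\Bigr).
\end{align*}
Then I would observe that, since $x$ ranges over all of $\mathcal{H}$ while the inner constraint pins $x=v_{1}+\cdots+v_{n}$, this nested supremum equals the unconstrained supremum over $(v_{1},\dots,v_{n})\in\mathcal{H}^{n}$ with $x$ replaced by $\sum_{i}v_{i}$; expanding $\langle\sum_{i}v_{i},u\rangle=\sum_{i}\langle v_{i},u\rangle$ by bilinearity gives
\begin{align*}
    (f_{1}\Box\cdots\Box f_{n})^*(u)
    = \sup_{(v_{1},\dots,v_{n})\in\mathcal{H}^{n}}\sum_{i=1}^{n}\bigl(\langle v_{i},u\rangle - f_{i}(v_{i})\bigr).
\end{align*}
Finally, because the $i$-th summand depends only on $v_{i}$ and the variables $v_{1},\dots,v_{n}$ are mutually unconstrained, the supremum distributes over the sum, yielding $\sum_{i=1}^{n}\sup_{v_{i}\in\mathcal{H}}\bigl(\langle v_{i},u\rangle - f_{i}(v_{i})\bigr)=\sum_{i=1}^{n}f_{i}^*(u)$. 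Since $u$ was arbitrary, this is the claim.

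The step needing the most care is the extended-real bookkeeping, which is also where properness enters: for each $i$, picking any $x_{0}\in\dom f_{i}$ shows $f_{i}^*(u)\ge\langle x_{0},u\rangle-f_{i}(x_{0})>-\infty$, so $\sum_{i}f_{i}^*(u)$ always lies in $(-\infty,+\infty]$ and no indeterminate $(+\infty)+(-\infty)$ occurs; dually, each term $\langle v_{i},u\rangle-f_{i}(v_{i})$ lies in $[-\infty,+\infty)$, which is exactly what legitimizes splitting the supremum of the sum into a sum of suprema. I do not expect any genuine difficulty beyond this — the result is essentially definitional (convexity is in fact not needed for this identity, but I would retain the stated hypotheses to match the surrounding development).
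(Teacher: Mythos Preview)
Your proof is correct and self-contained. The paper, by contrast, does not actually prove this statement: it simply cites \cite[Theorem~16.4]{rockafellar1997convex}. So the two approaches differ in that yours is a direct elementary computation from the definitions, whereas the paper defers entirely to a standard reference. Your argument has the advantage of being self-contained and of making transparent exactly where properness is used (to ensure each $f_i^*>-\infty$ so that the sum $\sum_i f_i^*$ is well defined in $(-\infty,+\infty]$); it also correctly notes that convexity plays no role in this identity. The citation approach buys brevity and situates the result in the classical literature. Either is perfectly acceptable here, and your reading of the asserted equality as $f_1^*+\cdots+f_n^*$ (correcting the evident typo) is the intended one.
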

\begin{proof}
    See \cite[Theorem 16.4]{rockafellar1997convex}.
\end{proof}

\begin{proposition}\label{prop for infimal convolution}
    Suppose $f_{1},\dots,f_{n-1}:\mathcal{H}\rightarrow(-\infty,+\infty]$ are proper lsc $\alpha$-strongly convex, and $f_{n}:\mathcal{H}\rightarrow(-\infty,+\infty)$ is convex. Then $(f_{1}\Box\cdots\Box f_{n})\colon\mathcal{H}\to(-\infty,+\infty)$ is convex and exact at every $v\in\mathcal{H}.$
\end{proposition}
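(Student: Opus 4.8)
The plan is to express the $n$-fold infimal convolution at a point as the minimization of a single $\alpha$-strongly convex function over the product space $\mathcal{H}^{n-1}$, and then invoke the fact that a proper, lower semicontinuous, $\alpha$-strongly convex function on a Hilbert space attains its unique minimum.

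Fix $v\in\mathcal{H}$. Writing the constraint $v=v_1+\dots+v_n$ from \eqref{infimal convolution} as $v_n=v-\sum_{i=1}^{n-1}v_i$ and treating $(v_1,\dots,v_{n-1})\in\mathcal{H}^{n-1}$ as free variables, we have
\[
(f_1\Box\cdots\Box f_n)(v)=\inf_{(v_1,\dots,v_{n-1})\in\mathcal{H}^{n-1}}\Phi(v_1,\dots,v_{n-1}),\qquad
\Phi(v_1,\dots,v_{n-1}):=\sum_{i=1}^{n-1}f_i(v_i)+f_n\!\Bigl(v-\sum_{i=1}^{n-1}v_i\Bigr),
\]
so that exactness at $v$ is precisely the attainment of the infimum of $\Phi$. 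I would then verify three properties of $\Phi$. First, $\Phi$ is proper: choosing $v_i\in\dom f_i$ for each $i<n$ (possible since each $f_i$ is proper) makes $\Phi$ finite at that point, while $\Phi>-\infty$ everywhere because each $f_i$ is $(-\infty,+\infty]$-valued and $f_n$ is real-valued. Second, $\Phi$ is $\alpha$-strongly convex: the map $(v_1,\dots,v_{n-1})\mapsto\sum_{i=1}^{n-1}\bigl(f_i(v_i)-\tfrac{\alpha}{2}\|v_i\|^2\bigr)$ is convex and $\sum_{i=1}^{n-1}\tfrac{\alpha}{2}\|v_i\|^2=\tfrac{\alpha}{2}\|(v_1,\dots,v_{n-1})\|^2$, whereas $(v_1,\dots,v_{n-1})\mapsto f_n(v-\sum_{i=1}^{n-1}v_i)$ is convex as a composition of the convex $f_n$ with a continuous affine map. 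Third, $\Phi$ is lower semicontinuous: each $v_i\mapsto f_i(v_i)$ is lsc, hence so is their sum on $\mathcal{H}^{n-1}$, and the remaining term is lsc as soon as $f_n$ is; here one uses lower semicontinuity of $f_n$, which for a real-valued convex function on a Hilbert space is equivalent to its continuity.

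Given these three properties, a standard result on proper lsc strongly convex functions guarantees that $\Phi$ is coercive and attains its minimum at a unique point $(\bar v_1,\dots,\bar v_{n-1})$, with finite minimal value. Hence $(f_1\Box\cdots\Box f_n)(v)=\Phi(\bar v_1,\dots,\bar v_{n-1})\in(-\infty,+\infty)$ and the infimum in \eqref{infimal convolution} is attained by taking $v_i=\bar v_i$ for $i<n$ and $v_n=v-\sum_{i=1}^{n-1}\bar v_i$; since $v\in\mathcal{H}$ was arbitrary, $f_1\Box\cdots\Box f_n$ is everywhere real-valued and exact. Convexity then follows from the general fact that an infimal convolution of convex functions is convex, or directly: for $v,v'\in\mathcal{H}$ and $\lambda\in(0,1)$, the tuple $\bigl(\lambda\bar v_i+(1-\lambda)\bar v_i'\bigr)_{i=1}^{n-1}$ is feasible for $\lambda v+(1-\lambda)v'$, and convexity of each $f_i$ yields the desired inequality after passing to the infimum.

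The step requiring the most care is the lower semicontinuity of $\Phi$ together with the attainment result: $\alpha$-strong convexity alone does not force the infimum to be attained or even finite — for an unbounded linear functional $\ell$, the strongly convex function $v\mapsto\tfrac{\alpha}{2}\|v\|^2-\ell(v)$ has infimum $-\infty$ — so one genuinely needs lower semicontinuity (equivalently, continuity) of $f_n$ in order to apply the coercivity/weak-compactness argument. The remaining ingredients (properness, strong convexity on the product space, and the bookkeeping associated with the constraint $\sum_{i=1}^{n}v_i=v$) are routine.
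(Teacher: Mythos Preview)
Your proof is correct and follows essentially the same approach as the paper: both fix the point, introduce the same auxiliary function $\Phi$ (called $F_2$ in the paper) on $\mathcal{H}^{n-1}$ by eliminating $v_n$ via the constraint, verify it is proper, lsc, and $\alpha$-strongly convex, and then invoke the standard existence/uniqueness result for minimizers of such functions (the paper cites \cite[Corollary~11.17]{bauschke2011convex}), with convexity handled separately. Your write-up is in fact slightly more careful than the paper's in that you explicitly flag the need for lower semicontinuity of $f_n$, which the paper uses without comment when asserting that $F_2$ is lsc.
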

\begin{proof}
Convexity of $f_{1}\Box\cdots\Box f_{n}$ follows by applying \cite[Proposition~8.26]{bauschke2011convex} to the function $F_1:\mathcal{H}\times\mathcal{H}^{n-1}\rightarrow(-\infty,+\infty]:(u,(v_1,\dots,v_{n-1}))\mapsto\sum_{i=1}^{n-1}f_{i}(v_{i})+f_{n}\bigl(u-\sum_{i=1}^{n-1}v_{i}\bigr)$. To show $f_{1}\Box\cdots\Box f_{n}$ is exact, fix $u\in\mathcal{H}$ and consider the convex function 
    $$F_2(v_1,\dots,v_{n-1}):=\sum_{i=1}^{n-1}f_{i}(v_{i})+f_{n}\bigl(u-\sum_{i=1}^{n-1}v_{i}\bigr),$$
where we note that $\dom F_2\supseteq \dom f_1\times\dots\times\dom f_{n-1}$ as $\dom f_n=\mathcal{H}$. Since $f_1,\dots,f_{n-1}$ are proper and lsc, it follows that $F_2$ is also proper and lsc.
Since $f_1,\dots,f_{n-1}$ are $\alpha$-strongly convex on $\mathcal{H}$, it follows that $F_2$ is $\alpha$-strongly convex on $\mathcal{H}^{n-1}$. Applying \cite[Corollary 11.17]{bauschke2011convex} to the proper lsc $\alpha$-convex function $F_2$ implies it has exactly one minimizer. Since $u\in\mathcal{H}$ was chosen arbitrarily, this completes the proof. 
\end{proof}

The \emph{subdifferential} of a function $f:\mathcal{H}\rightarrow(-\infty,+\infty]$ at $x\in\dom f$ is given by $$\partial f(x):=\{u\in\mathcal{H}:\langle y-x,u\rangle+f(x)\leq f(y), \forall y\in\mathcal{H}\},$$ and  at $x\notin \dom f$ it is defined as $\partial f(x):=\emptyset$. 
In order to compute the subdifferential of the sum of two functions, we will make use the following sum-rule which assumes a condition involving the strong relative interior. Recall that a set $D\subseteq\mathcal{H}$ is \emph{cone} if it satisfies $D=\mathbb{R}_{++}D$. The smallest cone in $\mathcal{H}$  containing $D$ is denoted $\cone D$, and the smallest closed linear subspace of $\mathcal{H}$ containing $D$ is denoted $\overline{\text{span} D}$. The \emph{strong relative interior} of $D$ is given by
$$\sri D:=\{x\in D: \cone(D-x)=\overline{\text{span}(D-x)}\}.$$
Note that when $\mathcal{H}$ is finite-dimensional, the notion of strong relative interior coincides with the usual notation of \emph{relative interior}~\cite[Fact 6.14(i)]{bauschke2011convex}.
\begin{theorem}\label{sum rule of subdifferential for two functions}
   Let $\mathcal{H}_{1}$ and $\mathcal{H}_{2}$ be real Hilbert spaces. Suppose $f:\mathcal{H}_{1}\rightarrow(-\infty,+\infty]$ and $g:\mathcal{H}_{2}\rightarrow(-\infty,+\infty]$ are proper lsc convex functions, and $C:\mathcal{H}_{1}\rightarrow\mathcal{H}_{2}$ is bounded and linear. If $0\in\sri(\dom g-C\dom f)$ then
   $$\partial(f+g\circ C)=\partial f+C^*\circ\partial g\circ C.$$
\end{theorem}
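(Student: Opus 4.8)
The final statement is Theorem~\ref{sum rule of subdifferential for two functions}, the subdifferential sum rule for $\partial(f + g\circ C)$ under the strong relative interior condition $0\in\sri(\dom g - C\dom f)$.

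\medskip

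The plan is to reduce this to the standard Attouch--Brezis-type sum rule for two functions on the \emph{same} space, which is already available in the literature (e.g.\ \cite[Corollary~16.38 or Theorem~16.47]{bauschke2011convex}), by encoding the composition with $C$ as an addition on the product space $\mathcal{H}_1\times\mathcal{H}_2$. First I would record the easy inclusion: for any $x$, if $u\in\partial f(x)$ and $w\in\partial g(Cx)$ then, testing the subgradient inequalities at $Cy$ and adding, one gets $u + C^*w\in\partial(f+g\circ C)(x)$; this gives $\partial f + C^*\circ\partial g\circ C\subseteq\partial(f+g\circ C)$ with no constraint qualification needed. The real content is the reverse inclusion, and this is where the $\sri$ hypothesis enters.

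\medskip

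For the reverse inclusion, I would introduce the functions on $\mathcal{H}_1\times\mathcal{H}_2$ given by $F(x,y) := f(x) + \iota_{\gra C}(x,y)$ — or more symmetrically $F(x,y) := f(x)$ restricted to the graph of $C$ — and $G(x,y):=g(y)$, equivalently work with $\tilde f(x,y) = f(x)$, $\tilde g(x,y) = g(y)$, and the closed subspace $V=\gra C=\{(x,Cx):x\in\mathcal{H}_1\}$. One checks that $f + g\circ C$ is, up to the obvious identification $x\leftrightarrow(x,Cx)$, the restriction of $\tilde f + \tilde g$ to $V$, i.e.\ $(f+g\circ C)(x) = (\tilde f + \tilde g + \iota_V)(x,Cx)$. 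Then I would apply the two-function sum rule twice (or the $n$-function version): the subdifferential of $\tilde f$ is $\{0\}\times\partial g$-style products $\partial f\times\{0\}$, that of $\tilde g$ is $\{0\}\times\partial g$, and that of $\iota_V$ is $V^\perp = \{(C^*w, -w):w\in\mathcal{H}_2\}$. The constraint qualification needed to combine these is precisely a statement about $\dom\tilde f\cap\dom\tilde g\cap V = \gra(C|_{\dom f})\cap(\mathcal{H}_1\times\dom g)$, and translating $0\in\sri(\dom\tilde f + \dom\tilde g - V)$ (or the appropriate difference) back down to $\mathcal{H}_2$ yields exactly $0\in\sri(\dom g - C\dom f)$, using that the image of a subspace/affine set under the bounded linear projection interacts well with $\sri$ (cf.\ \cite[Proposition~6.19 and Corollary~16.38]{bauschke2011convex}). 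Chaining the inclusions and projecting the $V^\perp$ component back gives $\partial(f+g\circ C)(x)\subseteq\partial f(x) + C^*\partial g(Cx)$.

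\medskip

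The main obstacle I anticipate is the bookkeeping in the constraint qualification: verifying that $0\in\sri(\dom g - C\dom f)$ in $\mathcal{H}_2$ is equivalent to (or at least implies) the product-space condition $0\in\sri\bigl((\dom f\times\dom g) - \gra C\bigr)$ in $\mathcal{H}_1\times\mathcal{H}_2$ required to apply the Attouch--Brezis sum rule there. This is a routine but slightly delicate computation with cones and spans of differences of convex sets, and one must be careful that $\sri$ (not merely $\ri$) is preserved, since $\mathcal{H}$ need not be finite-dimensional; the key fact is that a bounded surjective-onto-its-range linear map sends $\sri$ into $\sri$ and that $\gra C - \gra C = \gra C$ is itself a subspace, which simplifies the difference. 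An alternative, if one prefers to avoid the product-space detour, is to cite the composition rule directly — \cite[Theorem~16.47]{bauschke2011convex} states exactly $\partial(g\circ C) = C^*\circ\partial g\circ C$ under $0\in\sri(\dom g - C\dom f)$ with $f$ absorbed — and then apply the plain two-function sum rule \cite[Corollary~16.48(iii)]{bauschke2011convex}; in that case the proof is essentially a two-line citation and the ``obstacle'' is merely checking that the cited qualification conditions match ours, which they do.
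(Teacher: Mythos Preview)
Your sketch is mathematically sound --- the product-space reduction via $\gra C$ and the direct citation of the composition rule are both standard, valid routes to this result. However, the paper does not prove this theorem at all: its ``proof'' is the single line ``See \cite[Theorem 16.37(i)]{bauschke2011convex}.'' This is a preliminaries-section statement quoted from Bauschke--Combettes, not an original contribution, so the paper simply cites it. Your own closing remark that ``the proof is essentially a two-line citation'' is exactly what the paper does; the elaborate product-space argument you outline is correct but far more than what is called for here.
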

\begin{proof}
    See \cite[Theorem 16.37(i)]{bauschke2011convex}.
\end{proof}
Now introduce the following proposition which will be useful for simplifying our result.
\begin{proposition}\label{lemma for gap}
    Suppose $f\colon\mathcal{H}\to(-\infty,+\infty]$ is proper lsc convex, and $(u^k)$ converges $R$-linearly to $u$. If there exists a bounded sequence of subgradients $\phi^k\in\partial f(u^k)$ and $\partial f(u)\neq \emptyset$, then $f(u^k)$ converges $R$-linearly to $f(u)$.
\end{proposition}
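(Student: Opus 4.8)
The plan is to bound $|f(u^k)-f(u)|$ above by a geometrically decaying sequence, using the subgradient inequality twice --- once based at $u^k$ and once based at $u$. First I would observe that all the function values involved are finite: since $\partial f(u^k)\neq\emptyset$ we have $u^k\in\dom f$, and since $\partial f(u)\neq\emptyset$ we have $u\in\dom f$; because $f$ is proper this forces $f(u^k),f(u)\in\mathbb{R}$, so the claimed convergence is meaningful.

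For the upper estimate, I would apply the subgradient inequality for $\phi^k\in\partial f(u^k)$ at the point $y=u$, namely $\langle u-u^k,\phi^k\rangle+f(u^k)\le f(u)$, which rearranges to $f(u^k)-f(u)\le\langle u^k-u,\phi^k\rangle\le\|u^k-u\|\,\|\phi^k\|$ by Cauchy--Schwarz. Setting $M:=\sup_{k}\|\phi^k\|<\infty$ --- this is exactly where the boundedness hypothesis on $(\phi^k)$ is used --- and invoking the $R$-linear convergence $\|u^k-u\|\le c r^k$ for some $c\in\mathbb{R}_+$ and $r\in[0,1)$, we get $f(u^k)-f(u)\le Mc\,r^k$.

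For the reverse estimate, I would fix any $\phi\in\partial f(u)$ (which exists by the assumption $\partial f(u)\neq\emptyset$) and apply the subgradient inequality at $y=u^k$, giving $\langle u^k-u,\phi\rangle+f(u)\le f(u^k)$, hence $f(u)-f(u^k)\le\langle u-u^k,\phi\rangle\le\|u^k-u\|\,\|\phi\|\le c\|\phi\|\,r^k$. Combining the two one-sided bounds yields $|f(u^k)-f(u)|\le c\max\{M,\|\phi\|\}\,r^k$ for all $k\in\mathbb{N}$, which is precisely $R$-linear convergence of $(f(u^k))$ to $f(u)$.

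I do not expect a real obstacle here; the proof is a direct double application of the subgradient inequality. The only point worth flagging is that the estimate $f(u^k)-f(u)\le\|u^k-u\|\,\|\phi^k\|$ is vacuous without a uniform bound on the $\phi^k$, so the boundedness hypothesis is essential, and symmetrically the reverse bound breaks down unless $\partial f(u)\neq\emptyset$; this explains why both assumptions are imposed (note that neither lower semicontinuity nor convexity beyond the defining subgradient inequality is actually needed for the argument).
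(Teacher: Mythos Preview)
Your proof is correct and follows essentially the same approach as the paper: both apply the subgradient inequality once at $u^k$ (using the bounded $\phi^k$) and once at $u$ (using some $\phi\in\partial f(u)$), then combine the two one-sided estimates with Cauchy--Schwarz and the $R$-linear bound on $\|u^k-u\|$. Your added remarks about finiteness of the function values and about which hypotheses are actually used are accurate but not present in the paper's shorter write-up.
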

\begin{proof}
By assumption, there exists $M>0$ such that $\|\phi^k\|\leq M$ for all $k\in\mathbb{N}$. On one hand, since $\phi^k\in\partial f(u^k)$, we have
 $f(u^k)-f(u)\leq \langle \phi^k,u^k-u\rangle \leq \|\phi^k\|\|u^k-u\|\leq M\|u^k-u\|. $
On the other hand, for any $\phi\in\partial f(u)\neq\emptyset$, we have
 $  f(u)-f(u^k)\leq \langle \phi,u-u^k\rangle \leq \|\phi\|\|u-u^k\|. $
Since $(u^k)$ converges $R$-linearly to $u$, the result follows by combining these inequalities.
\end{proof}
Given a proper lsc convex function $f:\mathcal{H}\rightarrow(-\infty,+\infty]$, its \emph{proximal operator} \cite[Definition 12.23]{bauschke2011convex}, denoted by $\prox_{f}\colon\mathcal{H}\rightarrow\mathcal{H}$, is given by 
$$\prox_f:=\argmin_{u\in\mathcal{H}}\left\{f(u)+\frac{1}{2}\|\cdot-u\|^2\right\}.$$
The proximal operator of $f$ be can viewed as the resolvent of $\partial f$. In other words, $J_{\partial f}=\prox_{f}$ (see \cite[Example 23.3]{bauschke2011convex}). Finally, we recall the \emph{Moreau decomposition} which relates the proximal operator of a function to the proximal operator of its conjugate. 
\begin{theorem}[\emph{Moreau decomposition}]\label{Moreau decomposition}
    Let $f:\mathcal{H}\rightarrow(-\infty,+\infty]$ be a proper lsc convex function. Then
$$x=\prox_f(x)+\prox_{f^*}(x) \quad \forall x\in\mathcal{H}.$$
\end{theorem}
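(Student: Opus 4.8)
The plan is to derive the identity from the optimality characterisation of the proximal operator together with the fact that the subdifferential of the conjugate is the inverse of the subdifferential. Since $f$ is proper lsc convex, $\partial f$ is maximally monotone, so by Proposition~\ref{nonexpansiveness} its resolvent $J_{\partial f}=\prox_f$ is single-valued with full domain; likewise $f^*$ is proper lsc convex, so $\prox_{f^*}=J_{\partial f^*}$ is well-defined. Thus both terms on the right-hand side make sense for every $x\in\mathcal{H}$, and it suffices to verify the claimed equality pointwise.

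Fix $x\in\mathcal{H}$ and set $p:=\prox_f(x)=J_{\partial f}(x)$. By the defining property of the resolvent, $p=(\Id+\partial f)^{-1}(x)$, which is equivalent to $x-p\in\partial f(p)$. I would then invoke the standard conjugacy rule $\partial f^*=(\partial f)^{-1}$ --- equivalently, the equality case of the Fenchel--Young inequality, namely $u\in\partial f(y)\iff y\in\partial f^*(u)$, which holds because $f=f^{**}$ for proper lsc convex $f$ (see, e.g., \cite[Corollary~16.30]{bauschke2011convex}). Applying this to $x-p\in\partial f(p)$ yields $p\in\partial f^*(x-p)$.

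Now write $q:=x-p$. The inclusion $p\in\partial f^*(q)$ reads $x-q\in\partial f^*(q)$, that is, $x\in(\Id+\partial f^*)(q)$, and hence $q=(\Id+\partial f^*)^{-1}(x)=J_{\partial f^*}(x)=\prox_{f^*}(x)$. Consequently $x=p+q=\prox_f(x)+\prox_{f^*}(x)$, and since $x\in\mathcal{H}$ was arbitrary, the identity follows.

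The only nontrivial ingredient is the conjugacy identity $\partial f^*=(\partial f)^{-1}$, so the main obstacle is whether to cite it or prove it in place; since it is a standard consequence of $f^{**}=f$ and the Fenchel--Young inequality (the latter immediate from the definition of $f^*$), I would simply cite it. An equivalent and slightly more streamlined route is to note the purely operator-theoretic identity $J_A+J_{A^{-1}}=\Id$ valid for any maximally monotone $A$ --- which follows from the chain of equivalences $p=J_A(x)\iff x-p\in A(p)\iff p\in A^{-1}(x-p)\iff x-p=J_{A^{-1}}(x)$ --- and then to specialise to $A=\partial f$, using $\partial f^*=(\partial f)^{-1}$ together with $J_{\partial f}=\prox_f$ and $J_{\partial f^*}=\prox_{f^*}$.
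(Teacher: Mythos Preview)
Your proof is correct and is the standard argument for the Moreau decomposition. Note, however, that the paper does not actually prove this statement: its ``proof'' consists solely of the citation ``See \cite[Remark~14.4]{bauschke2011convex}.'' So there is no approach to compare against --- you have supplied a self-contained derivation where the paper defers entirely to the literature. Your argument via $x-p\in\partial f(p)\iff p\in\partial f^*(x-p)$ (using $(\partial f)^{-1}=\partial f^*$) is exactly the classical route and would be a perfectly acceptable replacement for the bare citation.
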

\begin{proof}
    See \cite[Remark 14.4]{bauschke2011convex}.
\end{proof}
\section{Linear Convergence of Resolvent Splitting with Minimal Lifting}\label{s:resolvent splitting}
In this section, we establish linear convergence of the algorithm given by \eqref{eq:1} and \eqref{eq:2} for solving the inclusion \eqref{eq:1n}. This algorithm is a fixed-point algorithm based on the operator $T_{\rm MT}:\mathcal{H}^{n-1}\rightarrow\mathcal{H}^{n-1}$ defined as 
\begin{equation}\label{eq: fixed point operator}
    T_{\rm MT}(\mathbf{z})=\mathbf{z}+\gamma\begin{pmatrix}
x_{2}-x_{1}\\x_{3}-x_{2}\\\vdots\\x_{n}-x_{n-1}
\end{pmatrix},
\end{equation}
where $\mathbf{x}=(x_{1},\dots,x_{n})\in\mathcal{H}^{n}$ depends on $\mathbf{z}=(z_{1},\dots, z_{n-1})\in \mathcal{H}^{n-1}$ and is given by\\
\begin{equation} \label{eq: def of x}
\left\{\begin{aligned} 
x_{1} &=J_{A_{1}}(z_{1})\\
x_{i} &=J_{A_{i}}(z_{i}+x_{i-1}-z_{i-1})&\forall i\in \{2,\dots,(n-1)\}  \\
x_{n} &=J_{A_{n}}(x_{1}+x_{n-1}-z_{n-1}).
\end{aligned}\right.
\end{equation}
Our analysis identifies conditions under which the operator $T_{\rm MT}$ is a $\beta$-contraction with $\beta\in(0,1)$, as detailed in Lemma~\ref{lemma for contraction factor}, and our main regarding linear convergence is given in Theorem~\ref{theorem for linear convergence}. 

We will use the following lemmas to simplify the presentation of our main result. We begin by recalling the following Lemma~\ref{new lemma} concerning fixed point of $T_{\rm MT}$.
\begin{lemma}\label{new lemma}
    Let $n\geq2$ and $\gamma\in(0,1)$. Suppose $A_{1},\dots,A_{n}:\mathcal{H}\setto\mathcal{H}$ are maximally monotone. Let $\mathbf{z}^*=(z^*_{1},\dots,z^*_{n-1})\in\Fix T_{MT}$ and set $x^*=J_{A_{1}}({z_{1}}^*)$. Then 
    $x^*\in\zer(\sum_{i=1}^n A_{i})$,  and 
    \begin{equation} \label{eq: def of x^*}
x^* =J_{A_{i}}(z^*_{i}+x^*-z^*_{i-1})=J_{A_{n}}(2x^*-z^*_{n-1})\quad \forall i\in \{2,\dots,(n-1)\}.
\end{equation}
\end{lemma}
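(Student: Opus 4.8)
The plan is to unpack the fixed-point condition $\mathbf{z}^*=T_{\rm MT}(\mathbf{z}^*)$ and convert it, together with the defining relations~\eqref{eq: def of x}, into the characterization~\eqref{eq: def of x^*} and then into the inclusion $0\in\sum_{i=1}^n A_i(x^*)$. First I would observe that $\mathbf{z}^*=T_{\rm MT}(\mathbf{z}^*)$ together with~\eqref{eq: fixed point operator} forces $x_1^*=x_2^*=\dots=x_n^*=:x^*$, since $\gamma>0$ and the increment vector $(x_2^*-x_1^*,\dots,x_n^*-x_{n-1}^*)$ must vanish. Because $x^*=x_1^*=J_{A_1}(z_1^*)$, this is consistent with the given definition of $x^*$.

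Next I would substitute $x_{i-1}^*=x^*$ into the recursion~\eqref{eq: def of x}: for $i\in\{2,\dots,n-1\}$ we get $x^*=x_i^*=J_{A_i}(z_i^*+x^*-z_{i-1}^*)$, and for $i=n$ we get $x^*=x_n^*=J_{A_n}(x_1^*+x_{n-1}^*-z_{n-1}^*)=J_{A_n}(2x^*-z_{n-1}^*)$, using $x_1^*=x_{n-1}^*=x^*$. This establishes~\eqref{eq: def of x^*}. Then I would translate each resolvent identity into a subgradient-type inclusion via $x=J_A(y)\iff y-x\in A(x)$: from $x^*=J_{A_1}(z_1^*)$ we obtain $z_1^*-x^*\in A_1(x^*)$; from the middle equations, $(z_i^*-z_{i-1}^*)\in A_i(x^*)$ for $i\in\{2,\dots,n-1\}$ (after simplifying $z_i^*+x^*-z_{i-1}^*-x^*$); and from the last equation, $x^*-z_{n-1}^*\in A_n(x^*)$ (since $2x^*-z_{n-1}^*-x^*=x^*-z_{n-1}^*$).

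Finally I would add these $n$ inclusions. The right-hand sides sum to an element of $\sum_{i=1}^n A_i(x^*)$, while the left-hand sides telescope: $(z_1^*-x^*)+\sum_{i=2}^{n-1}(z_i^*-z_{i-1}^*)+(x^*-z_{n-1}^*) = z_1^* - x^* + (z_{n-1}^* - z_1^*) + x^* - z_{n-1}^* = 0$. Hence $0\in\sum_{i=1}^n A_i(x^*)$, i.e.\ $x^*\in\zer(\sum_{i=1}^n A_i)$. I do not anticipate a genuine obstacle here; the only mildly delicate point is bookkeeping the index shifts in the telescoping sum and handling the boundary cases $n=2$ (where the ``middle'' range is empty and the first and last relations alone give $z_1^*-x^*\in A_1(x^*)$ and $x^*-z_1^*\in A_2(x^*)$, summing to $0$) correctly, which is purely routine.
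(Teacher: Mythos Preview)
Your proposal is correct and follows exactly the natural argument: the fixed-point condition forces the $x_i^*$ to coincide, substitution into~\eqref{eq: def of x} yields~\eqref{eq: def of x^*}, the resolvent identities convert to inclusions, and the telescoping sum gives $0\in\sum_{i=1}^n A_i(x^*)$. The paper does not actually prove this lemma but simply cites \cite[Lemma~4.2]{malitsky2023resolvent}; your argument is precisely the standard one given there, so there is nothing to compare.
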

\begin{proof}
    See \cite[Lemma 4.2]{malitsky2023resolvent}.
\end{proof}

The following lemma refines \cite[Lemma 4.3]{malitsky2023resolvent} and its proof to the setting where some of the operators are potentially strongly monotone. 
\begin{lemma} \label{lemma 3.1}
Let $n\geq 2$ and $\gamma\in(0, 1)$. Suppose $A_{1},\dots,A_{n}: \mathcal{H}\setto \mathcal{H}$ are maximally $\mu_{i}$-monotone with $\mu_{i}\geq0$ for $i\in\{1,\dots,n\}$. Then, for all $\mathbf{z}=(z_{1},\dots, z_{n-1})\in \mathcal{H}^{n-1}$ and $\mathbf{\Bar{z}}=(\bar{z}_{1},\dots, \bar{z}_{n-1})\in \mathcal{H}^{n-1}$, we have
\begin{multline} \label{eq:3}
    \| T_{\rm MT}(\mathbf{z})-T_{\rm MT}(\Bar{\mathbf{z}})\|^2 +\gamma(1-\gamma)\sum_{i=1}^{n-1}\|({x}_{i}-{x}_{i+1})-(\Bar{x}_{i}-\Bar{{x}}_{i+1})\|^2+\gamma\|(x_{n}-x_{1})-(\Bar{x}_{n}-\Bar{x}_{1})\|^2\\
    \leq \|\mathbf{z}-\bar{\mathbf{z}}\|^2-2\gamma\sum_{i=1}^{n}\mu_{i}\|x_{i}-\bar{x}_{i}\|^2,
\end{multline}
where $T_{\rm MT}:\mathcal{H}^{n-1}\rightarrow \mathcal{H}^{n-1}$ is defined by \eqref{eq: fixed point operator}, $\mathbf{x}=(x_{1},\dots,x_{n})\in \mathcal{H}^{n}$ is given by \eqref{eq: def of x} and $\Bar{\mathbf{x}}=(\Bar{x}_{1},\dots,\bar{x}_{n})\in \mathcal{H}^{n}$ is given analogously.
\end{lemma}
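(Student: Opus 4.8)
The plan is to expand the left-hand side by writing $T_{\rm MT}(\mathbf{z})-T_{\rm MT}(\bar{\mathbf{z}})$ componentwise using the definition \eqref{eq: fixed point operator}, and then to exploit the defining resolvent identities \eqref{eq: def of x}. The key observation is that each $x_i$ arises as $J_{A_i}$ applied to some argument; writing $u_i$ and $\bar u_i$ for those arguments, maximal $\mu_i$-monotonicity of $A_i$ gives the inclusion $u_i - x_i \in A_i(x_i)$, hence the inequality $\langle x_i - \bar x_i,\, (u_i - x_i) - (\bar u_i - \bar x_i)\rangle \ge \mu_i\|x_i - \bar x_i\|^2$. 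Summing these $n$ inequalities (with weight $2\gamma$) produces the $-2\gamma\sum_i \mu_i\|x_i-\bar x_i\|^2$ term on the right; the monotone case ($\mu_i = 0$ for all $i$) is exactly \cite[Lemma~4.3]{malitsky2023resolvent}, so the task is to redo that computation carrying the $\mu_i$ terms through rather than discarding them.

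Concretely, I would first introduce the shorthand differences $\delta z_i := z_i - \bar z_i$, $\delta x_i := x_i - \bar x_i$, and note from \eqref{eq: def of x} that the arguments of the resolvents are: $z_1$ for $A_1$; $z_i + x_{i-1} - z_{i-1}$ for $A_i$, $i=2,\dots,n-1$; and $x_1 + x_{n-1} - z_{n-1}$ for $A_n$. Subtracting the barred quantities and rearranging the monotonicity inequalities gives, for each $i$, a lower bound on an inner product of the form $\langle \delta x_i, (\text{combination of } \delta z\text{'s and }\delta x\text{'s})\rangle$. Next I would form the telescoping sum: the $A_1$ inequality contributes $\langle \delta x_1, \delta z_1 - \delta x_1\rangle$, the middle ones contribute $\langle \delta x_i, \delta z_i + \delta x_{i-1} - \delta z_{i-1} - \delta x_i\rangle$, and the last contributes $\langle \delta x_n, \delta x_1 + \delta x_{n-1} - \delta z_{n-1} - \delta x_n\rangle$. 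Multiplying by $2\gamma$ and adding, one uses the polarization identity $2\langle a,b\rangle = \|a\|^2 + \|b\|^2 - \|a-b\|^2$ (or $2\langle a,b\rangle = \|a+b\|^2 - \|a\|^2 - \|b\|^2$) repeatedly to convert these inner products into squared norms that can be matched against $\|\mathbf{z}-\bar{\mathbf{z}}\|^2 - \|T_{\rm MT}(\mathbf{z})-T_{\rm MT}(\bar{\mathbf{z}})\|^2$ and against the two sum-of-squares terms $\gamma(1-\gamma)\sum_{i=1}^{n-1}\|\delta x_i - \delta x_{i+1}\|^2$ and $\gamma\|\delta x_n - \delta x_1\|^2$ on the left.

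I would also use the sharper resolvent estimate from Proposition~\ref{nonexpansiveness} — namely $\|J_A x - J_A y\|^2 + \|(\Id - J_A)x - (\Id - J_A)y\|^2 \le \|x-y\|^2$ — for the step $x_1 = J_{A_1}(z_1)$, since this is where the ``firm'' part of firm nonexpansiveness of the first resolvent feeds into the identity $T_{\rm MT}(\mathbf{z})^{(i)} = z_i + \gamma(x_{i+1}-x_i)$ and ultimately supplies the $\gamma(1-\gamma)$ coefficient; this is precisely how the corresponding bookkeeping goes in \cite{malitsky2023resolvent}. The main obstacle is purely organizational: the argument is a single long chain of algebraic identities in which every cross term must cancel exactly, and the only substantive change from \cite[Lemma~4.3]{malitsky2023resolvent} is that the monotonicity inequalities are now strict-by-$\mu_i\|\delta x_i\|^2$ instead of plain monotonicity, so I would present the computation for general $\mu_i \ge 0$ and simply observe that setting all $\mu_i = 0$ recovers the known identity, which serves as a consistency check. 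No genuinely new idea is needed beyond carefully tracking the strong-monotonicity surplus through the existing telescoping argument.
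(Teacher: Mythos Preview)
Your plan is essentially the paper's proof: write the $\mu_i$-monotonicity inequality for each $A_i$ using the resolvent inclusions from \eqref{eq: def of x}, add them, and reorganise via polarisation together with the relation $z_i - z_i^+ = \gamma(x_i - x_{i+1})$ to isolate $\|\mathbf{z}-\bar{\mathbf{z}}\|^2 - \|T_{\rm MT}(\mathbf{z})-T_{\rm MT}(\bar{\mathbf{z}})\|^2$ and the two difference-of-$x$ sums; the $\mu_i$ terms simply ride along as you say.

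One correction worth flagging: the paper does \emph{not} use Proposition~\ref{nonexpansiveness} at all, and the $\gamma(1-\gamma)$ coefficient does not come from firm nonexpansiveness of $J_{A_1}$. It arises purely algebraically. After summing the monotonicity inequalities one is left with (among other terms) $\sum_{i=1}^{n-1}\langle \delta x_i - \delta x_{i+1},\, -\delta x_i\rangle + \sum_{i=1}^{n-1}\langle \delta x_i - \delta x_{i+1},\, \delta z_i\rangle$. Polarisation on the first sum produces $-\tfrac12\sum\|\delta x_i - \delta x_{i+1}\|^2$; in the second sum one substitutes $\delta x_i - \delta x_{i+1} = \tfrac1\gamma(\delta z_i - \delta z_i^+)$, expands, and then substitutes back, which produces $+\tfrac{\gamma}{2}\sum\|\delta x_i - \delta x_{i+1}\|^2$ together with $\tfrac{1}{2\gamma}(\|\mathbf{z}-\bar{\mathbf{z}}\|^2 - \|\mathbf{z}^+-\bar{\mathbf{z}}^+\|^2)$. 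The two coefficients combine to $\tfrac{\gamma-1}{2}$, and multiplying the whole inequality by $2\gamma$ gives the $\gamma(1-\gamma)$ factor. All $n$ operators, including $A_1$, enter only through their $\mu_i$-monotonicity inequality, so drop the appeal to Proposition~\ref{nonexpansiveness} (which in any case would lose the $\mu_1$ term).
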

\begin{proof}
For convenience, denote $\mathbf{z}^+:= T_{\rm MT}(\mathbf{z})$ and $\Bar{\mathbf{z}}^+:=T_{\rm MT}(\Bar{\mathbf{z}})$. Since $z_{1}-x_{1}\in A_{1}(x_{1})$ and $\bar{z}_{1}-\bar{x}_{1}\in A_{1}(\bar{x}_{1})$, maximally $\mu_{1}$-monotonicity of $A_{1}$ implies
\begin{equation} \label{eq:4}
\begin{aligned}
 \mu_{1}\|x_{1}-\bar{x}_{1}\|^2&\leq\left<x_{1}-\bar{x}_{1},(z_{1}-x_{1})-(\bar{z}_{1}-\bar{x}_{1})\right>\\
 &=\left<x_{2}-\bar{x}_{1},(z_{1}-x_{1})-(\bar{z}_{1}-\bar{x}_{1})\right>+\left<x_{1}-x_{2},(z_{1}-x_{1})-(\bar{z}_{1}-\bar{x}_{1})\right>.
\end{aligned}
\end{equation}
For $i\in\{2,\dots,n-1\}, z_{i}-z_{i-1}+x_{i-1}-x_{i}\in A_{i}(x_{i})$ and $\bar{z}_{i}-\bar{z}_{i-1}+\bar{x}_{i-1}-\bar{x}_{i}\in A_{i}(\bar{x}_{i})$. Thus maximally $\mu_{i}$-monotonicity of $A_{i}$ yields
\begin{equation*}
\begin{aligned}
    \mu_{i}\|x_{i}-\bar{x}_{i}\|^2&\leq\langle x_{i}-\bar{x}_{i}, (z_{i}-z_{i-1}+x_{i-1}-x_{i})-(\bar{z}_{i}-\bar{z}_{i-1}+\bar{x}_{i-1}-\bar{x}_{i})\rangle\\&=\langle x_{i}-\bar{x}_{i}, (z_{i}-x_{i})-(\bar{z}_{i}-\bar{x}_{i})\rangle-\langle x_{i}-\bar{x}_{i}, (z_{i-1}-x_{i-1})-(\bar{z}_{i-1}-\bar{x}_{i-1})\rangle\\
    &=\langle x_{i+1}-\bar{x}_{i}, (z_{i}-x_{i})-(\bar{z}_{i}-\bar{x}_{i})\rangle+\langle x_{i}-{x}_{i+1}, (z_{i}-x_{i})-(\bar{z}_{i}-\bar{x}_{i})\rangle\\
    &\qquad -\left<x_{i}-\bar{x}_{i-1}, (z_{i-1}-x_{i-1})-(\bar{z}_{i-1}-\bar{x}_{i-1})\right>-\left<\bar{x}_{i-1}-\bar{x}_{i}, (z_{i-1}-x_{i-1})-(\bar{z}_{i-1}-\bar{x}_{i-1})\right>.
\end{aligned}
\end{equation*}
Summing this inequality for $i\in\{2,\dots,n-1\}$ and simplifying gives
\begin{multline} \label{eq:5}
    \mu_{i}\|x_{i}-\bar{x}_{i}\|^2\leq\left<x_{n}-\bar{x}_{n}, (z_{n-1}-x_{n-1})-(\bar{z}_{n-1}-\bar{x}_{n-1})\right>-\left<x_{2}-\bar{x}_{1}, (z_{1}-x_{1})-(\bar{z}_{1}-\bar{x}_{1})\right>\\
    +\sum_{i=2}^{n-1}\left<x_{i}-{x}_{i+1}, (z_{i}-x_{i})-(\bar{z}_{i}-\bar{x}_{i})\right>-\sum_{i=1}^{n-2}\left<\bar{x}_{i}-\bar{x}_{i+1}, (z_{i}-x_{i})-(\bar{z}_{i}-\bar{x}_{i})\right>.
\end{multline}
Since $x_{1}+x_{n-1}-x_{n}-z_{n-1}\in A_{n}(x_{n})$ and $\bar{x}_{1}+\bar{x}_{n-1}-\bar{x}_{n}-\bar{z}_{n-1}\in A_{n}(\bar{x}_{n})$, maximally $\mu_{n}$-monotonicity of $A_{n}$ gives
\begin{equation} \label{eq:6}
\begin{aligned} 
\mu_{n}\|x_{n}-\Bar{x}_{n}\|^2&\leq\langle x_{n}-\bar{x}_{n}, (x_{1}+x_{n-1}-x_{n}-z_{n-1})-(\bar{x}_{1}+\bar{x}_{n-1}-\bar{x}_{n}-\bar{z}_{n-1})\rangle\\
&=\langle x_{n}-\bar{x}_{n}, (x_{n-1}-z_{n-1})-(\bar{x}_{n-1}-\bar{z}_{n-1})\rangle+\langle x_{n}-\bar{x}_{n}, (x_{1}-\bar{x}_{1})-({x}_{n}-\bar{x}_{n})\rangle\\
&=-\langle x_{n}-\bar{x}_{n-1},(z_{n-1}-x_{n-1})-(\bar{z}_{n-1}-\bar{x}_{n-1})\rangle+\langle\bar{x}_{n}-\bar{x}_{n-1},(z_{n-1}-x_{n-1})-(\bar{z}_{n-1}-\bar{x}_{n-1})\rangle\\
  &\qquad +\frac{1}{2}(\|x_{1}-\bar{x}_{1}\|^2-\|x_{n}-\bar{x}_{n}\|^2-\|(x_{1}-x_{n})-(\bar{x}_{1}-\bar{x}_{n})\|^2).
\end{aligned}  
\end{equation}
Adding \eqref{eq:4}, \eqref{eq:5}, and \eqref{eq:6} and rearranging gives
\begin{multline} \label{eq:7}
    \sum_{i=1}^n\mu_{i}\|x_{i}-\bar{x}_{i}\|^2\leq\sum_{i=1}^{n-1}\langle(x_{i}-\bar{x}_{i})-(x_{i+1}-\bar{x}_{i+1}), \bar{x}_{i}-x_{i}\rangle+\sum_{i=1}^{n-1}\langle(x_{i}-\bar{x}_{i})-(x_{i+1}-\bar{x}_{i+1}), {z}_{i}-\bar{z}_{i}\rangle\\+\frac{1}{2}(\|x_{1}-\bar{x}_{1}\|^2-\|x_{n}-\bar{x}_{n}\|^2-\|(x_{1}-x_{n})-(\bar{x}_{1}-\bar{x}_{n})\|^2).
\end{multline}
The first term in \eqref{eq:7} can be expressed as
\begin{equation} \label{eq:8}
 \begin{aligned}
 &\sum_{i=1}^{n-1}\langle(x_{i}-\bar{x}_{i})-(x_{i+1}-\bar{x}_{i+1}), \bar{x}_{i}-x_{i}\rangle\\
   &=\frac{1}{2}\sum_{i=1}^{n-1}(\|x_{i+1}-\bar{x}_{i+1}\|^2-\|x_{i}-\bar{x}_{i}\|^2-\|(x_{i}-x_{i+1})-(\bar{x}_{i}-\bar{x}_{i+1})\|^2)\\
   &=\frac{1}{2}(\|x_{n}-\bar{x}_{n}\|^2-\|x_{1}-\bar{x}_{1}\|^2-\sum_{i=1}^{n-1}\|(x_{i}-x_{i+1})-(\bar{x}_{i}-\bar{x}_{i+1})\|^2).
\end{aligned}   
\end{equation}
Also the second term in \eqref{eq:7} can be written as
\begin{equation} \label{eq:9}
    \begin{aligned}
    &\sum_{i=1}^{n-1}\left<(x_{i}-\bar{x}_{i})-(x_{i+1}-\bar{x}_{i+1}), {z}_{i}-\bar{z}_{i}\right>\\
    &=\frac{1}{\gamma}\sum_{i=1}^{n-1}\left<(z_{i}-z_{i}^+)-(\bar{z}_{i}-\bar{z}_{i}^+),z_{i}-\bar{z}_{i}\right>\\
    &=\frac{1}{\gamma}\left<(\mathbf{z}-\mathbf{z}^+)-(\bar{\mathbf{z}}-\bar{\mathbf{z}}^+), \mathbf{z}-\bar{\mathbf{z}}\right>\\
     &=\frac{1}{2\gamma}\left(\|(\mathbf{z}-\mathbf{z}^+)-(\bar{\mathbf{z}}-\bar{\mathbf{z}}^+)\|^2+\|\mathbf{z}-\bar{\mathbf{z}}\|^2-\|\mathbf{z}^+-\bar{\mathbf{z}}^+\|^2\right)\\
     &=\frac{1}{2\gamma}\left(\sum_{i=1}^{n-1}\|(z_{i}-z^+_{i})-(\bar{z}_{i}-\bar{z}^+_{i})\|^2+\|\mathbf{z}-\bar{\mathbf{z}}\|^2-\|\mathbf{z}^+-\bar{\mathbf{z}}^+\|^2\right)\\
     &=\frac{\gamma}{2}\sum_{i=1}^{n-1}\|(x_{i}-x_{i+1})-(\bar{x}_{i}-\bar{x}_{i+1})\|^2+\frac{1}{2\gamma}\left(\|\mathbf{z}-\bar{\mathbf{z}}\|^2-\|\mathbf{z}^+-\bar{\mathbf{z}}^+\|^2\right).
    \end{aligned}
\end{equation}
Thus substituting \eqref{eq:8} and \eqref{eq:9} into \eqref{eq:7}, and simplifying gives \eqref{eq:3}. This completes the proof.
\end{proof}
In what follows, we will make frequent use of the inequality
\begin{equation}\label{inequality}
    ab\leq \frac{1}{2\epsilon}a^2+\frac{\epsilon}{2}b^2\text{ for }a,b\geq0 \text{ and }\epsilon>0.
\end{equation}
\begin{lemma}\label{lipschitz operators}
    Let $n\geq 2$. Suppose that $A_{1},\dots,A_{n-1}: \mathcal{H}\rightarrow \mathcal{H}$ are maximally monotone and $L$-Lipschitz, and $A_{n}:\mathcal{H}\setto\mathcal{H}$ is maximally monotone. Then there exists $\eta\in(0,1)$ such that for all $\mathbf{z}=(z_{1},\dots, z_{n-1})\in \mathcal{H}^{n-1}$ and $\mathbf{\Bar{z}}=(\bar{z}_{1},\dots, \bar{z}_{n-1})\in \mathcal{H}^{n-1}$, we have
    \begin{equation}\label{lipschitz for n*}
    \sum_{i=1}^{n-1}\|x_{i}-\Bar{x}_{i}\|^2\geq \eta\|\mathbf{z}-\bar{\mathbf{z}}\|^2, 
\end{equation}
where $\mathbf{x}=(x_{1},\dots,x_{n})\in \mathcal{H}^{n}$ is given by \eqref{eq: def of x}, and $\Bar{\mathbf{x}}=(\Bar{x}_{1},\dots,\bar{x}_{n})\in \mathcal{H}^{n}$ is given analogously.
\end{lemma}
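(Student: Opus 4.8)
The plan is to unroll the recursion~\eqref{eq: def of x} into a clean closed form for each $z_i$ in terms of the $x_j$'s, and then simply invert the Lipschitz bound. First I would use that $A_1,\dots,A_{n-1}$ are single-valued to rewrite the defining relations pointwise: from $x_1=J_{A_1}(z_1)$ we get $z_1=x_1+A_1(x_1)$, and for $i\in\{2,\dots,n-1\}$ the relation $x_i=J_{A_i}(z_i+x_{i-1}-z_{i-1})$ gives $z_i+x_{i-1}-z_{i-1}=x_i+A_i(x_i)$. A short induction on $i$ (using $z_{i}=x_i+A_i(x_i)-x_{i-1}+z_{i-1}$ and the inductive hypothesis for $z_{i-1}$) then yields
\[
z_i=x_i+\sum_{j=1}^{i}A_j(x_j)\qquad\forall i\in\{1,\dots,n-1\},
\]
and the analogous identity holds for $\bar z_i$ in terms of the $\bar x_j$.

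Next I would subtract the two identities, apply the triangle inequality, and use $L$-Lipschitz continuity of each $A_j$:
\[
\|z_i-\bar z_i\|\le\|x_i-\bar x_i\|+\sum_{j=1}^{i}\|A_j(x_j)-A_j(\bar x_j)\|\le (1+L)\sum_{j=1}^{n-1}\|x_j-\bar x_j\|.
\]
Squaring, summing over $i\in\{1,\dots,n-1\}$, and invoking the elementary inequality $\bigl(\sum_{j=1}^{m}a_j\bigr)^2\le m\sum_{j=1}^{m}a_j^2$ (a consequence of Cauchy--Schwarz, in the spirit of~\eqref{inequality}), one obtains
\[
\|\mathbf{z}-\bar{\mathbf{z}}\|^2=\sum_{i=1}^{n-1}\|z_i-\bar z_i\|^2\le (n-1)^2(1+L)^2\sum_{i=1}^{n-1}\|x_i-\bar x_i\|^2 .
\]
Since $n\ge 2$ and $L\ge 0$, the constant $(n-1)^2(1+L)^2$ is at least $1$, so taking e.g. $\eta:=\bigl(1+(n-1)^2(1+L)^2\bigr)^{-1}\in(0,1)$ establishes~\eqref{lipschitz for n*}.

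This is a short argument, so I do not expect a genuine obstacle; the only points needing a little care are (i) justifying the closed-form recursion for $z_i$ cleanly by induction, which relies on $A_1,\dots,A_{n-1}$ being single-valued (note that $A_n$ does not enter the sum on the left-hand side of~\eqref{lipschitz for n*}, consistent with the fact that $A_n$ is not assumed Lipschitz), and (ii) choosing $\eta$ strictly below $1$ --- the bare reciprocal of the constant above already lies in $(0,1]$ and equals $1$ only in the degenerate case $n=2$, $L=0$, which is why I would take the slightly smaller value displayed above.
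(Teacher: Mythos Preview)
Your proof is correct, and it takes a genuinely different---and considerably shorter---route than the paper's.

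The paper never writes $z_i$ explicitly in terms of the $x_j$'s. Instead it works with the increments $w_i:=(z_i-x_i)-(\bar z_i-\bar x_i)=A_i(x_i)-A_i(\bar x_i)$ (for $i=1$) and uses Lipschitz continuity in the form $L^2\|x_i-\bar x_i\|^2\ge\|A_i(x_i)-A_i(\bar x_i)\|^2$, expanding the right-hand side for $i\ge 2$ as $\|w_i-w_{i-1}\|^2$ and applying~\eqref{inequality} with a free parameter $\epsilon_i$. A recursive choice $\epsilon_{i+1}=\sqrt{2-1/\epsilon_i}$ is then made to keep all coefficients positive, producing a lower bound of the form $\epsilon'\sum_i\|w_i\|^2$; a second application of~\eqref{inequality} converts this to a bound on $\|\mathbf z-\bar{\mathbf z}\|^2$, yielding $\eta=(1+L/\sqrt{\epsilon'})^{-2}$.

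Your argument bypasses all of this by observing the telescoping identity $z_i=x_i+\sum_{j\le i}A_j(x_j)$, which reduces the lemma to a one-line Lipschitz-plus-Cauchy--Schwarz estimate and gives the explicit constant $\eta=(1+(n-1)^2(1+L)^2)^{-1}$. The gain is transparency and brevity; the paper's approach, with its more delicate bookkeeping of the $w_i$'s, could in principle produce a sharper constant (it does not accumulate an explicit factor of $(n-1)^2$), but neither proof aims at tightness, as the paper itself acknowledges in Section~\ref{s: conclusions}.
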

\begin{proof}
Since $z_{1}-x_{1}\in A_{1}(x_{1})$ and $\bar{z}_{1}-\bar{x}_{1}\in A_{1}(\bar{x}_{1})$, $L$-Lipschitz continuity of $A_{1}$ implies
\begin{align} \label{eq34}
    L^2\|x_{1}-\Bar{x}_{1}\|^2\geq\|A_{1}(x_{1})-A_{1}(\bar{x}_{1})\|^2=\|{(z_{1}-x_{1})-(\Bar{z}_{1}-\Bar{x}_{1})}\|^2.
\end{align}    
For $i\in\{2,\dots,n-1\}, z_{i}-z_{i-1}+x_{i-1}-x_{i}\in A_{i}(x_{i})$ and $\bar{z}_{i}-\bar{z}_{i-1}+\bar{x}_{i-1}-\bar{x}_{i}\in A_{i}(\bar{x}_{i})$. Thus, for any $\epsilon_{i}>0$, $L$-Lipschitz continuity of $A_{i}$ followed by applying \eqref{inequality} yields
\begin{equation}\begin{aligned}\label{eq:A_i Lips}
    L^2\| x_{i}-\bar{x}_{i}\|^2&\geq
\| A_{i}(x_{i})-A_{i}(\bar{x}_{i})\|^2\\
&=\|(z_{i}-z_{i-1}+x_{i-1}-x_{i})-(\bar{z}_{i}-\bar{z}_{i-1}+\bar{x}_{i-1}-\bar{x}_{i})\|^2\\
&=\|\{(z_{i}-x_{i})-(\bar{z}_{i}-\bar{x}_{i})\}-\{(z_{i-1}-x_{i-1})-(\bar{z}_{i-1}-\bar{x}_{i-1})\}\|^2\\
&=\|(z_{i}-x_{i})-(\bar{z}_{i}-\bar{x}_{i})\|^2+\|(z_{i-1}-x_{i-1})-(\bar{z}_{i-1}-\bar{x}_{i-1})\|^2\\&\qquad-2\langle(z_{i}-x_{i})-(\bar{z}_{i}-\bar{x}_{i}),(z_{i-1}-x_{i-1})-(\bar{z}_{i-1}-\bar{x}_{i-1})\rangle\\
&\geq\|(z_{i}-x_{i})-(\bar{z}_{i}-\bar{x}_{i})\|^2+\|(z_{i-1}-x_{i-1})-(\bar{z}_{i-1}-\bar{x}_{i-1})\|^2\\
&\qquad-\frac{1}{\epsilon_{i}}\|(z_{i}-x_{i})-(\bar{z}_{i}-\bar{x}_{i})\|^2-\epsilon_{i}\|(z_{i-1}-x_{i-1})-(\bar{z}_{i-1}-\bar{x}_{i-1})\|^2\\
&=(1-\frac{1}{\epsilon_{i}})\|(z_{i}-x_{i})-(\bar{z}_{i}-\bar{x}_{i})\|^2+(1-\epsilon_{i})\|(z_{i-1}-x_{i-1})-(\bar{z}_{i-1}-\bar{x}_{i-1})\|^2.
\end{aligned}\end{equation}
Summing the inequality~\eqref{eq:A_i Lips} for $i\in\{2,\dots,n-1\}$ and then adding \eqref{eq34} gives
\begin{equation}\label{*}
\begin{aligned}
    \sum_{i=1}^{n-1}L^2\| x_{i}-\bar{x}_{i}\|^2&\geq\|{(z_{1}-x_{1})-(\Bar{z}_{1}-\Bar{x}_{1})}\|^2+\sum_{i=2}^{n-1}(1-\frac{1}{\epsilon_{i}})\|(z_{i}-x_{i})-(\bar{z}_{i}-\bar{x}_{i})\|^2\\&\qquad+\sum_{i=2}^{n-1}(1-\epsilon_{i})\|(z_{i-1}-x_{i-1})-(\bar{z}_{i-1}-\bar{x}_{i-1})\|^2\\
    &\geq(2-\epsilon_{2})\|{(z_{1}-x_{1})-(\Bar{z}_{1}-\Bar{x}_{1})}\|^2+\sum_{i=2}^{n-2}\left(2-\frac{1}{\epsilon_{i}}-\epsilon_{i+1}\right)\|(z_{i}-x_{i})-(\bar{z}_{i}-\bar{x}_{i})\|^2\\
    &\qquad+\left(1-\frac{1}{\epsilon_{n-1}}\right)\|(z_{n-1}-x_{n-1})-(\bar{z}_{n-1}-\bar{x}_{n-1})\|^2.
\end{aligned}
\end{equation}
Now fix $\epsilon_{2}\in(1,2)$. We claim that we can choose constants $\epsilon_3,\dots,\epsilon_{n-1}\in(1,2)$ such that 
\begin{equation}\label{min of epsilon'}
    \epsilon':=\min_{i\in\{2,\dots,n-2\}}\left\{(2-\epsilon_{2}),\left(2-\frac{1}{\epsilon_{i}}-\epsilon_{i+1}\right),\left(1-\frac{1}{\epsilon_{n-1}}\right)\right\}>0.
\end{equation}
Indeed, first note that $2-\epsilon_2>0$ by assumption. Next suppose $\epsilon_i\in(1,2)$ for some $i\in\{2,\dots,n-2\}$. Since $1<(2-\frac{1}{\epsilon_i})<2$, we deduce that
$$\epsilon_{i+1}:=\sqrt{2-\frac{1}{\epsilon_{i}}}\in(1,2) \implies  \epsilon_{i+1} < \epsilon_{i+1}^2 = 2-\frac{1}{\epsilon_{i}} \implies 2-\frac{1}{\epsilon_{i}} - \epsilon_{i+1}>0. $$
Finally, by construction $\epsilon_{n-1}\in(1,2)$ and so $1-\frac{1}{\epsilon_{n-1}}>0$.

Now, combining \eqref{min of epsilon'} and \eqref{*} followed by applying \eqref{inequality}, we deduce that
\begin{equation}\label{simplify for epsilon*}
    \begin{aligned}
        L^2\sum_{i=1}^{n-1}\|x_{i}-\Bar{x}_{i}\|^2
        &\geq 
        \epsilon'\sum_{i=1}^{n-1}\|(z_{i}-x_{i})-(\bar{z}_{i}-\bar{x}_{i})\|^2\\
        &= \epsilon'\sum_{i=1}^{n-1}\left(\|z_{i}-\bar{z}_i\|^2+\|x_{i}-\bar{x}_{i}\|^2-2\langle z_i-\bar{z}_i,x_i-\bar{x}_i\rangle \right)\\
        &\geq \epsilon'\sum_{i=1}^{n-1}\left(\|z_{i}-\bar{z}_i\|^2+\|x_{i}-\bar{x}_{i}\|^2-\frac{\sqrt{\epsilon'}}{\sqrt{\epsilon'}+L}\|z_i-\bar{z}_i\|^2-\frac{\sqrt{\epsilon'}+L}{\sqrt{\epsilon'}}\|x_i-\bar{x}_i\|^2 \right)\\        
        &= \frac{\epsilon'L}{\sqrt{\epsilon'}+L}\|\mathbf{z}-\mathbf{\Bar{z}}\|^2-\sqrt{\epsilon'}L\sum_{i=1}^{n-1}\|x_{i}-\Bar{x}_{i}\|^2.
    \end{aligned}
\end{equation}
Rearranging this expression gives
\begin{equation}\label{lipschitz for n operator}
    \sum_{i=1}^{n-1}\|x_{i}-\Bar{x}_{i}\|^2\geq\frac{1}{\left(1+\frac{1}{\sqrt{\epsilon'}}L\right)^2}\|\mathbf{z}-\bar{\mathbf{z}}\|^2,
\end{equation}
which implies \eqref{lipschitz for n*}. This completes the proof.
\end{proof}
\begin{lemma}\label{lemma for contraction factor}
Let $n\geq 2$ and $\gamma\in(0,1)$. Suppose that one of the following holds:
\begin{enumerate}[(a)]
    \item $A_{1},\dots,A_{n-1}: \mathcal{H}\rightarrow \mathcal{H}$ are maximally monotone and $L$-Lipschitz, and $A_{n}\colon \mathcal{H}\setto \mathcal{H}$ is maximally $\mu$-strongly monotone. 
    \item $A_{1},\dots,A_{n-1}: \mathcal{H}\rightarrow \mathcal{H}$ are maximally $\mu$-strongly monotone and $L$-Lipschitz, and $A_{n}\colon \mathcal{H}\setto \mathcal{H}$ is maximally monotone.
\end{enumerate}
Then $T_{\rm MT}$ is a contraction.
\begin{proof}
    For convenience, denote $\mathbf{z}^+:= T_{\rm MT}(\mathbf{z})$ and $\bar{\mathbf{z}}^+:= T_{\rm MT}(\bar{\mathbf{z}})$. Let $\textbf{x}=(x_{1},\dots,x_{n})\in \mathcal{H}^n$ be given by \eqref{eq: def of x} and $\Bar{\textbf{x}}=(\Bar{x}_{1},\dots,\bar{x}_{n})\in \mathcal{H}^n$ be given analogously. 

(a):~Since $A_{1},\dots,A_{n-1}$ are maximally monotone and $A_{n}$ is maximally $\mu$-strongly monotone, Lemma~\ref{lemma 3.1} implies 
\begin{equation}\label{correct version for n}
    \| \mathbf{z}^+ - \Bar{\mathbf{z}}^+\|^2+\gamma(1-\gamma)\sum_{i=1}^{n-1}\|({x}_{i}-{x}_{i+1})-(\Bar{x}_{i}-\Bar{{x}}_{i+1})\|^2\leq\| \mathbf{z}-\bar{\mathbf{z}}\|^2-2\gamma\mu\|x_{n}-\bar{x}_{n}\|^2.
\end{equation}
        For $i\in\{1,\dots,n-1\}$ and any $\alpha_{i}>0$, applying \eqref{inequality} gives
\begin{equation}\label{new 33}
\begin{aligned}
        \|(x_{i}-x_{i+1})-(\Bar{x}_{i}-\Bar{x}_{i+1})\|^2&\geq \|x_{i+1}-\Bar{x}_{i+1}\|^2+\|x_{i}-\Bar{x}_{i}\|^2-2\langle x_{i}-\bar{x}_{i},x_{i+1}-\bar{x}_{i+1}\rangle\\
        &\geq (1-\alpha_{i})\|x_{i+1}-\Bar{x}_{i+1}\|^2+(1-\frac{1}{\alpha_{i}})\|x_{i}-\Bar{x}_{i}\|^2.
\end{aligned}
\end{equation}
By combining \eqref{correct version for n} and \eqref{new 33}, we obtain
\begin{multline}\label{new eq 33}
        \| \mathbf{z}^+ - \Bar{\mathbf{z}}^+\|^2+\gamma(1-\gamma)\left[\left(1-\frac{1}{\alpha_{1}}\right)\|x_{1}-\bar{x}_{1}\|^2+\sum_{i=2}^{n-1}\left(2-\frac{1}{\alpha_{i}}-\alpha_{i-1}\right)\|x_{i}-\Bar{x}_{i}\|^2\right]\\+[2\gamma\mu+\gamma(1-\gamma)(1-\alpha_{n-1})]\|x_{n}-\bar{x}_{n}\|^2\leq\| \mathbf{z}-\bar{\mathbf{z}}\|^2.
\end{multline}
We claim that we can choose constants $\alpha_{1},\dots,\alpha_{n-1}$ such that
\begin{equation}\label{p'}
    \alpha':=\min_{i\in\{2,\dots,n-1\}}\left\{\left(1-\frac{1}{\alpha_{1}}\right),\left(2-\frac{1}{\alpha_{i}}-\alpha_{i-1}\right)\right\}>0.
\end{equation}
Set $\alpha_{n-1}:=1+\frac{2\mu}{(1-\gamma)}>1$ and note that $2-\frac{1}{\alpha_{n-1}}>1$. Suppose $\alpha_i>1$ for some $i\in\{n-1,\dots,2\}$. Since $2-\frac{1}{\alpha_i}>1$, we deduce that
$$\alpha_{i-1}:=\sqrt{2-\frac{1}{\alpha_{i}}}>1\implies  \alpha_{i-1} < \alpha_{i-1}^2 = 2-\frac{1}{\alpha_{i}} \implies 2-\frac{1}{\alpha_{i}} - \alpha_{i-1}>0.$$
Finally, by construction $\alpha_{1}>1$ and so $1-\frac{1}{\alpha_{1}}>0$.

Now, using \eqref{p'} in \eqref{new eq 33} implies
\begin{equation} \label{eq:33}
  \|\mathbf{z}^+ - \Bar{\mathbf{z}}^+\|^2\leq\| \mathbf{z}-\bar{\mathbf{z}}\|^2-\gamma(1-\gamma)\alpha'\sum_{i=1}^{n-1}\|x_{i}-\Bar{x}_{i}\|^2.
\end{equation}
Since $A_{i}$ is maximally monotone and $L$-Lipschitz for $i\in\{1,\dots,n-1\}$, Lemma~\ref{lipschitz operators} implies there exists $\eta\in(0,1)$ such that
\begin{equation}\label{lipschitz for n}
    \sum_{i=1}^{n-1}\|x_{i}-\Bar{x}_{i}\|^2\geq\eta\|\mathbf{z}-\bar{\mathbf{z}}\|^2.
\end{equation}
Substituting \eqref{lipschitz for n} into \eqref{eq:33} and rearranging the equation we get,
\begin{equation} \label{eq:37}
  \|\mathbf{z}^+ - \Bar{\mathbf{z}}^+\|^2\leq\left[(1-\gamma(1-\gamma)\alpha'\eta\right]\|\mathbf{z}-\mathbf{\Bar{z}}\|^2.
\end{equation}
Therefore, $T_{\rm MT}$ is a $\beta$-contraction with $\beta=(1-\gamma(1-\gamma)\alpha'\eta)\in(0, 1)$. This completes the proof.

(b):~Since $A_{1},\dots,A_{n-1}$ are maximally $\mu$-strongly monotone and $A_{n}$ is maximally monotone, Lemma~\ref{lemma 3.1} implies 
\begin{equation}\label{correct version for n*}
    \| \mathbf{z}^+ - \Bar{\mathbf{z}}^+\|^2\leq\| \mathbf{z}-\bar{\mathbf{z}}\|^2-2\gamma\mu\sum_{i=1}^{n-1}\|x_{i}-\bar{x}_{i}\|^2.
\end{equation}
Since $A_{1},\dots,A_{n-1}$ are maximally monotone and $L$-Lipschitz, Lemma~\ref{lipschitz operators} implies there exists $\eta\in(0,1)$ such that
\begin{equation}\label{lipschitz}
    \sum_{i=1}^{n-1}\|x_{i}-\Bar{x}_{i}\|^2\geq\eta\|\mathbf{z}-\bar{\mathbf{z}}\|^2.
\end{equation}
Substituting \eqref{lipschitz} into \eqref{correct version for n*} gives
\begin{equation} \label{eq:37*}
  \|\mathbf{z}^+ - \Bar{\mathbf{z}}^+\|^2\leq\left(1-2\gamma\mu\eta\right)\|\mathbf{z}-\mathbf{\Bar{z}}\|^2.
\end{equation} 
Therefore, $T_{\rm MT}$ is a $\beta$-contraction with $\beta=(1-2\gamma\mu\eta)\in(0,1)$. This completes the proof.
\end{proof}
\end{lemma}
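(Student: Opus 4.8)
The plan is to combine the two ingredients already established: the descent-type inequality of Lemma~\ref{lemma 3.1}, which quantifies the gain coming from strong monotonicity, and the coercivity bound of Lemma~\ref{lipschitz operators}, which converts control of the shadow differences $\sum_{i=1}^{n-1}\|x_i-\bar x_i\|^2$ into control of $\|\mathbf z-\bar{\mathbf z}\|^2$. Throughout I write $\mathbf z^+:=T_{\rm MT}(\mathbf z)$, $\bar{\mathbf z}^+:=T_{\rm MT}(\bar{\mathbf z})$, and let $\mathbf x=(x_1,\dots,x_n)$, $\bar{\mathbf x}=(\bar x_1,\dots,\bar x_n)$ be the associated points given by \eqref{eq: def of x}.

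Case (b) is the easy one. Here $A_1,\dots,A_{n-1}$ are $\mu$-strongly monotone, so Lemma~\ref{lemma 3.1} with $\mu_1=\dots=\mu_{n-1}=\mu$ and $\mu_n=0$, after discarding the nonnegative consecutive-difference terms on the left-hand side, yields $\|\mathbf z^+-\bar{\mathbf z}^+\|^2\le\|\mathbf z-\bar{\mathbf z}\|^2-2\gamma\mu\sum_{i=1}^{n-1}\|x_i-\bar x_i\|^2$. Since $A_1,\dots,A_{n-1}$ are also $L$-Lipschitz, Lemma~\ref{lipschitz operators} supplies $\eta\in(0,1)$ with $\sum_{i=1}^{n-1}\|x_i-\bar x_i\|^2\ge\eta\|\mathbf z-\bar{\mathbf z}\|^2$, and substituting gives a contraction with factor $\beta=1-2\gamma\mu\eta\in(0,1)$.

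Case (a) is where the work lies. Now only $A_n$ is strongly monotone, so Lemma~\ref{lemma 3.1} (applied with $\mu_1=\dots=\mu_{n-1}=0$ and $\mu_n=\mu$) directly controls only $\|x_n-\bar x_n\|^2$, whereas Lemma~\ref{lipschitz operators} needs the sum over the \emph{first} $n-1$ coordinates. The idea is to exploit the surplus terms $\gamma(1-\gamma)\sum_{i=1}^{n-1}\|(x_i-x_{i+1})-(\bar x_i-\bar x_{i+1})\|^2$ retained on the left of Lemma~\ref{lemma 3.1} to propagate the coercivity from the last coordinate back across all coordinates. Concretely, for each $i$ and each $\alpha_i>0$, inequality \eqref{inequality} gives $\|(x_i-x_{i+1})-(\bar x_i-\bar x_{i+1})\|^2\ge(1-\alpha_i)\|x_{i+1}-\bar x_{i+1}\|^2+(1-1/\alpha_i)\|x_i-\bar x_i\|^2$; summing over $i$ and absorbing the $2\gamma\mu\|x_n-\bar x_n\|^2$ slack into the $i=n-1$ contribution, one wants the $\alpha_i$ chosen so that the coefficient of $\|x_n-\bar x_n\|^2$ is nonnegative while the coefficients of $\|x_i-\bar x_i\|^2$ for $i\le n-1$ are all bounded below by a common $\alpha'>0$. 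This is arranged by the backward recursion $\alpha_{n-1}:=1+\frac{2\mu}{1-\gamma}$ and $\alpha_{i-1}:=\sqrt{2-1/\alpha_i}$, which keeps every $\alpha_i$ strictly between $1$ and $2$ and makes each of $2-\frac1{\alpha_i}-\alpha_{i-1}$ and $1-\frac1{\alpha_1}$ strictly positive. The outcome is $\|\mathbf z^+-\bar{\mathbf z}^+\|^2\le\|\mathbf z-\bar{\mathbf z}\|^2-\gamma(1-\gamma)\alpha'\sum_{i=1}^{n-1}\|x_i-\bar x_i\|^2$, after which Lemma~\ref{lipschitz operators} finishes the argument exactly as in case (b), giving $\beta=1-\gamma(1-\gamma)\alpha'\eta\in(0,1)$.

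The main obstacle is the bookkeeping in case (a): checking that the backward recursion for $(\alpha_i)$ keeps all of them in $(1,2)$ while simultaneously rendering every coefficient of $\|x_i-\bar x_i\|^2$, $i=1,\dots,n-1$, strictly positive once the strong-monotonicity slack has been absorbed into the $\|x_n-\bar x_n\|^2$ term — i.e.\ that the quadratic form in $(\|x_1-\bar x_1\|,\dots,\|x_n-\bar x_n\|)$ produced after applying \eqref{inequality} dominates a positive multiple of $\sum_{i=1}^{n-1}\|x_i-\bar x_i\|^2$. A secondary point to confirm is that Lemma~\ref{lipschitz operators} applies verbatim in both cases: it requires only that $A_1,\dots,A_{n-1}$ be maximally monotone and $L$-Lipschitz, which holds, so no strengthening of it is needed. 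Everything else is routine substitution and rearrangement.
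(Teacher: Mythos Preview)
Your proposal is correct and follows essentially the same argument as the paper: case~(b) is handled by dropping the consecutive-difference terms from Lemma~\ref{lemma 3.1} and invoking Lemma~\ref{lipschitz operators} directly, while case~(a) uses the identical backward recursion $\alpha_{n-1}=1+\frac{2\mu}{1-\gamma}$, $\alpha_{i-1}=\sqrt{2-1/\alpha_i}$ to redistribute the strong-monotonicity slack across the first $n-1$ coordinates before applying Lemma~\ref{lipschitz operators}. One small slip: $\alpha_{n-1}$ need not lie in $(1,2)$ (it can be arbitrarily large), but all that is actually needed---and all the paper uses---is $\alpha_i>1$ for every $i$, which your recursion does guarantee.
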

\begin{remark}
In the absence of appropriate strong monotonicity or Lipschitz continuity (such as in Lemma~\ref{lemma for contraction factor}), the operator $T_{\rm MT}$ need not be a contraction. In what follows, we provide two such examples of the monotone inclusion problem \eqref{eq:1n} with $n=3$. The first example shows that, without strong monotonicity, $T_{MT}$ need not be a contraction even when all the operators are Lipschitz continuous. The second shows that, without Lipschitz continuity, $T_{MT}$ need not be a contraction even when all the operators are strongly monotone. In both cases, we show that $\Fix T_{\rm MT}$ contains more than one point which implies $T_{\rm MT}$ is not a contraction.
\begin{enumerate}[(a)]
    \item Consider the operators defined on $\mathbb{R}$ given by
\begin{equation*}
    A_{1}=0,\quad A_{2}=0,\quad A_{3}=0.
\end{equation*}
    Any $x^*\in\mathbb{R}$ is a solution of the inclusion, and the operators $A_{1}, A_{2}, A_{3}$ are monotone (but not strongly monotone) and $L$-Lipschitz for all $L>0$. The resolvents  are given by
    $$J_{A_{1}}=\Id,\quad J_{A_{2}}=\Id,\quad J_{A_{3}}=\Id.$$    
Let $\mathbf{z}=\binom{z_{1}}{z_{2}}\in\mathbb{R}\binom{1}{1}$. Then \eqref{eq: fixed point operator} and \eqref{eq: def of x} become
\begin{equation*}
\left\{\begin{aligned} 
x_{1} &=J_{A_{1}}(z_{1}) = z_1\\
x_{2} &=J_{A_{2}}(z_{2}+x_{1}-z_{1}) = J_{A_2}(z_2) = z_{2}\\ 
x_{3} &= J_{A_{3}}(x_1+x_2-z_2) = J_{A_3}(z_{1}) = z_{1}
\end{aligned}\right.
\implies \quad
  T_{\rm MT}(\mathbf{z})  = \mathbf{z}+\gamma\begin{pmatrix}
     z_{2}-z_{1} \\ z_{1}-z_{2}\\
 \end{pmatrix} =\mathbf{z},
\end{equation*}
and thus we conclude that $\mathbb{R}\binom{1}{1}\subseteq\Fix T_{\rm MT}$. Since $T_{\rm MT}$ has more than one fixed point,  we conclude that it is not a contraction. 
\item Let $\mu>0$ and consider the operators defined on $\mathbb{R}$ given by
$$ A_1 = \mu \Id + N_{\mathbb{R}_-},\quad A_2 = \mu \Id + N_{\mathbb{R}_+},\quad A_3 = \mu \Id + N_{\{0\}}. $$
Note that $x^*=0$ is the unique solution of the inclusion,  and the operators $A_1,A_2,A_3$ are $\mu$-strongly monotone (but not Lipschitz continuous). The resolvent \cite[Example 23.4]{bauschke2011convex} of these operators are given by   
$$ J_{A_1} = P_{N_{\mathbb{R}_-}}\circ \frac{1}{1+\mu}\Id,\quad J_{A_2} = P_{N_{\mathbb{R}_+}}\circ \frac{1}{1+\mu}\Id,\quad J_{A_3} = P_{N_{\{0\}}}\circ \frac{1}{1+\mu}\Id,$$
where $P_{N_{\mathbb{R}_-}}, P_{N_{\mathbb{R}_+}}, P_{N_{\{0\}}}$ denote the projection onto $N_{\mathbb{R}_-}, N_{\mathbb{R}_+}$ and $N_{\{0\}}$ respectively.

Let $\mathbf{z}=\binom{z_1}{z_2}\in\mathbb{R}_-\times\{0\}$. Then \eqref{eq: fixed point operator} and \eqref{eq: def of x} become
\begin{equation*}
\left\{\begin{aligned} 
x_{1} &=J_{A_{1}}(z_{1}) = P_{\mathbb{R}_+}\left(\frac{1}{1+\mu}z_1\right)=0 \\
x_{2} &=J_{A_{2}}(z_{2}+x_{1}-z_{1}) = P_{\mathbb{R}_-}\left(-\frac{1}{1+\mu}z_1\right) = 0\\ 
x_{3} &= J_{A_{3}}(x_1+x_2-z_2) = P_{\{0\}}\left(\frac{1}{1+\mu}\cdot 0\right)=0
\end{aligned}\right.
\implies T_{\rm MT}(\mathbf{z})  = \mathbf{z} + \gamma\begin{pmatrix}
     0\\ 0\\ 
 \end{pmatrix} = \mathbf{z},
\end{equation*}
and thus we conclude that $\mathbb{R}_-\times\{0\}\subseteq\Fix T_{\rm MT}$. Since $T_{\rm MT}$ has more than one fixed point,  we conclude that it is not a contraction.
\end{enumerate}
\end{remark}
We are now ready to state the main result of this section regarding linear convergence of the algorithm presented in \eqref{eq:1} and \eqref{eq:2}.
\begin{theorem}\label{theorem for linear convergence}
    Let $n\geq2$ and $\gamma\in(0,1)$. Suppose that one of the following holds:
    \begin{enumerate}[(a)]
        \item $A_{1},\dots,A_{n-1}:\mathcal{H}\rightarrow\mathcal{H}$ are maximally monotone and $L$-Lipschitz, and $A_{n}:\mathcal{H}\setto\mathcal{H}$ is maximally $\mu$-strongly monotone.
        \item $A_{1},\dots,A_{n-1}:\mathcal{H}\rightarrow\mathcal{H}$ are maximally $\mu$-strongly monotone and $L$-Lipschitz, and $A_{n}:\mathcal{H}\setto\mathcal{H}$ is maximally monotone.
    \end{enumerate} 
    Given $\mathbf{z}^0\in \mathcal{H}^{n-1}$, let $(\mathbf{z}^k)_{k\in\mathbb{N}}$ and $(\mathbf{x}^k)_{k\in\mathbb{N}}$ be the sequences given by~\eqref{eq:1} and \eqref{eq:2}. Then the following assertions hold:
    \begin{enumerate}[(i)]
        \item $(\mathbf{z}^k)_{k\in\mathbb{N}}$ converges $R$-linearly to the unique fixed point $\mathbf{z}^*\in\Fix T_{\rm MT}$. 
        \item $(\mathbf{x}^k)_{k\in\mathbb{N}}$ converges $R$-linearly to a point $(x^*,\dots, x^*)\in \mathcal{H}^n$ where $x^*$ is the unique element of $\zer(\sum_{i=1}^{n}A_{i})$. 
    \end{enumerate}
\end{theorem}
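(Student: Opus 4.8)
The plan is as follows. Part~(i) is essentially immediate. Under either hypothesis~(a) or~(b), Lemma~\ref{lemma for contraction factor} shows that $T_{\rm MT}$ is a $\beta$-contraction for some $\beta\in(0,1)$ (with $\beta=1-\gamma(1-\gamma)\alpha'\eta$ in case~(a) and $\beta=1-2\gamma\mu\eta$ in case~(b)). Applying the Banach fixed-point theorem (Theorem~\ref{Banach Theorem}) to $T_{\rm MT}$ then yields existence and uniqueness of $\mathbf{z}^*\in\Fix T_{\rm MT}$ together with the estimate $\|\mathbf{z}^k-\mathbf{z}^*\|\le\beta^k\|\mathbf{z}^0-\mathbf{z}^*\|$ for all $k\in\mathbb{N}$, which is exactly $R$-linear convergence of $(\mathbf{z}^k)_{k\in\mathbb{N}}$.

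For part~(ii), set $x^*:=J_{A_1}(z_1^*)$. By Lemma~\ref{new lemma}, $x^*\in\zer(\sum_{i=1}^n A_i)$ and moreover $x^*=J_{A_i}(z_i^*+x^*-z_{i-1}^*)$ for $i\in\{2,\dots,n-1\}$ and $x^*=J_{A_n}(2x^*-z_{n-1}^*)$. The idea is to compare the recursion~\eqref{eq:2} defining $\mathbf{x}^k$ with these fixed-point identities for $x^*$ term by term, using nonexpansiveness of each resolvent (Proposition~\ref{nonexpansiveness}) together with the triangle inequality. This gives $\|x_1^k-x^*\|\le\|z_1^k-z_1^*\|$, then inductively $\|x_i^k-x^*\|\le\|z_i^k-z_i^*\|+\|z_{i-1}^k-z_{i-1}^*\|+\|x_{i-1}^k-x^*\|$ for $i\in\{2,\dots,n-1\}$, and finally $\|x_n^k-x^*\|\le\|x_1^k-x^*\|+\|x_{n-1}^k-x^*\|+\|z_{n-1}^k-z_{n-1}^*\|$. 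Unrolling the induction bounds each $\|x_i^k-x^*\|$ by a fixed constant multiple of $\|\mathbf{z}^k-\mathbf{z}^*\|$, and hence $\|\mathbf{x}^k-(x^*,\dots,x^*)\|\le C\|\mathbf{z}^k-\mathbf{z}^*\|\le C\beta^k\|\mathbf{z}^0-\mathbf{z}^*\|$ for a constant $C\ge 0$ independent of $k$; this is $R$-linear convergence to $(x^*,\dots,x^*)$. To see $x^*$ is the \emph{unique} zero of $\sum_{i=1}^n A_i$, note that in case~(a) $A_n$ is $\mu$-strongly monotone and in case~(b) each of $A_1,\dots,A_{n-1}$ is $\mu$-strongly monotone, so in both cases $\sum_{i=1}^n A_i$ is (at least) $\mu$-strongly monotone and therefore has at most one zero; existence was established above.

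I do not expect a genuine obstacle here, since the substantive work is already carried out in Lemmas~\ref{lemma 3.1}, \ref{lipschitz operators} and~\ref{lemma for contraction factor}. The only point requiring mild care is the bookkeeping in the induction that transfers the linear rate from $(\mathbf{z}^k)_{k\in\mathbb{N}}$ to $(\mathbf{x}^k)_{k\in\mathbb{N}}$ — in particular verifying that the accumulated constant $C$ is finite and uniform in $k$ — which follows because the recursion has finitely many steps and each step contributes a bounded multiplicative and additive factor.
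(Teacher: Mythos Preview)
Your proposal is correct and follows essentially the same approach as the paper's proof: invoke Lemma~\ref{lemma for contraction factor} and Theorem~\ref{Banach Theorem} for part~(i), then use Lemma~\ref{new lemma}, nonexpansiveness of the resolvents, and a triangle-inequality induction along~\eqref{eq:2} for part~(ii), with uniqueness of $x^*$ coming from strong monotonicity of $\sum_{i=1}^n A_i$. The paper additionally tracks the explicit constants $\|x_i^k-x^*\|\le(2i-1)c\beta^k$ in the induction, but this is just a refinement of your ``fixed constant multiple'' argument.
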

\begin{proof}
(i):~Since the operators $A_{1},\dots,A_{n}$ satisfy either (a) or (b), Lemma~\ref{lemma for contraction factor} implies that the operator $T_{\rm MT}$ is a $\beta$-contraction for some $\beta\in(0,1)$. Thus, according to the Banach fixed-point theorem (Theorem~\ref{Banach Theorem}), $T_{\rm MT}$ has a unique fixed point, say $\{\mathbf{z}^*\}=\Fix T_{\rm MT}$ and 
\begin{equation}\label{linear convergence}
        \|\mathbf{z}^k-\mathbf{z}^*\|\leq c\beta^k\quad\forall k\in\mathbb{N},
    \end{equation}
where $c:=\|\mathbf{z}^{0}-\mathbf{z}^*\|$. In particular, this shows that the sequence $(\mathbf{z}^k)$ converges $R$-linearly to $\mathbf{z}^*\in\Fix T_{\rm MT}$.

(ii):~Since $\mathbf{z}^*\in\Fix T_{\rm MT}$, Lemma~\ref{new lemma} implies $x^*=J_{A_{1}}({z}^*_{1})\in \zer(\sum_{i=1}^n A_{i})$ and that \eqref{eq: def of x^*} holds. Furthermore, since $\sum_{i=1}^nA_{i}$ is strongly monotone, $\{x^*\}=\zer(\sum_{i=1}^n A_{i})$. Recall that $J_{A_{i}}$ is nonexpansive for $i\in\{1,\dots,n\}$ (Proposition~\ref{nonexpansiveness}). Thus, together with \eqref{linear convergence}, we deduce that
\begin{equation}\label{eq:r-lin x1}
\|x^k_{1}-x^*\|=\|J_{A_{1}}(z^k_{1})-J_{A_{1}}(z^*_{1})\|\leq\|z_{1}^k-z^*_{1}\| \leq \|\mathbf{z}^k-\mathbf{z}^*\| \leq c\beta^k.
\end{equation}
For $i\in\{2,\dots,n-1\}$, with the help of \eqref{linear convergence}, \eqref{eq:r-lin x1} and \eqref{eq:r-lin xi} (inductively), we get
\begin{equation}\label{eq:r-lin xi}\begin{aligned}
\|x^k_{i}-x^*\|
&=\|J_{A_{i}}(z^k_{i}+x^k_{i-1}-z^k_{i-1})-J_{A_{i}}(z^*_{i}+x^*-z^*_{i-1})\| \\
 &\leq \|z_{i}^k-z^*_{i}\|+\|z^k_{i-1}-z^*_{i-1}\|+\|x^k_{i-1}-x^*\|\\
 &\leq \|\mathbf{z}^k-\mathbf{z}^*\| + \|\mathbf{z}^k-\mathbf{z}^*\| + \bigl(2(i-1)-1\bigr) c\beta^k  \leq (2i-1)c\beta^k.
\end{aligned}
\end{equation}
Finally, using \eqref{linear convergence}, \eqref{eq:r-lin x1} and \eqref{eq:r-lin xi}, we deduce
\begin{align*}
\|x^k_{n}-x^*\| 
&=\|J_{A_{n}}(x^k_{1}+x^k_{n-1}-z^k_{n-1})-J_{A_{n}}(x^*+x^*-z^*_{n-1})\|\\
&\leq\|z^k_{n-1}-z^*_{n-1}\|+\|x^k_{1}-x^*\|+\|x^k_{n-1}-x^*\|\\
 &\leq \|\mathbf{z}^k-\mathbf{z}^*\| + c\beta^k + \bigl(2(n-1)-1\bigr) c\beta^k 
\leq (2n-1)c\beta^k.
\end{align*}
This shows that $(\mathbf{x}^k)$ converges $R$-linearly to $(x^*,\dots,x^*)\in\mathcal{H}^n$. 
\end{proof}

\section{Linear Convergence of a Primal-Dual Algorithm} \label{s: section 4}
In this section, we apply the results of Section~\ref{s:resolvent splitting} to the setting of Example~\ref{example 1.1} to derive linear convergence of a primal-dual algorithm based on the resolvent splitting algorithm presented in \eqref{eq:1} and \eqref{eq:2}. To this end, let $\mathcal{H}_{1}$ and $\mathcal{H}_{2}$ denote real Hilbert spaces and consider the minimization problem with $n\geq 2$ given by
\begin{equation} \label{convex optimization}
    \min_{u\in\mathcal{H}_{1}}\quad \sum_{i=2}^{n}f_{i}(u)+(g_{2}\Box\cdot\cdot\cdot\Box g_{n})(Cu)
\end{equation}
under the hypotheses in stated below in Assumption~\ref{assumption}. Using Proposition~\ref{remark infimal convolution}, the Fenchel dual of \eqref{convex optimization} can be expressed as 
\begin{equation} \label{convex optimization dual}
    \min_{v\in\mathcal{H}_{2}}\quad (f^*_{2}\Box\cdots\Box f^*_{n})(C^*v)+\sum_{i=2}^{n}g^*_{i}(-v).
\end{equation}
\begin{assumption}\label{assumption}
    \begin{enumerate}[(i)]
    \item $C:\mathcal{H}_{1}\rightarrow\mathcal{H}_{2}$ is bounded and linear. 
    \item $f_{i}:\mathcal{H}_{1}\rightarrow\mathbb{R}$ is convex and differentiable with $\alpha$-Lipschitz continuous gradient for $i=2,\dots,n-1$.
    \item $f_{n}:\mathcal{H}_{1}\rightarrow(-\infty,+\infty]$ is proper, closed and $\sigma$-strongly convex.
    \item $g_{i}:\mathcal{H}_{2}\rightarrow(-\infty,+\infty]$ is proper, closed and $\tau$-strongly convex for $i=2,\dots,n-1$.
    \item $g_{n}:\mathcal{H}_{2}\rightarrow\mathbb{R}$ is convex and differentiable with $\beta$-Lipschitz continuous gradient.
    \end{enumerate}
\end{assumption}

Denote $f(u):=\sum_{i=2}^{n-1}f_{i}(u)+f_{n}(u)$ and $g(v):=(g_{2}\Box\cdots\Box g_{n})(v)$. By Assumptions~\ref{assumption}\hyperref[assumption]{(ii)}-\hyperref[assumption]{(iii)}, we have $\dom f=\cap_{i=2}^n\dom f_i=\dom f_n\neq\emptyset$ and, by Assumptions~\ref{assumption}\hyperref[assumption]{(iv)}-\hyperref[assumption]{(v)} combined with Proposition~\ref{prop for infimal convolution},  we have that $g$ is proper lsc convex with $\dom g=\mathcal{H}_2$. We therefore have that 
$$\sri(\dom g-C(\dom f)) =\sri(\mathcal{H}_2-C(\dom f_n))=\sri\mathcal{H}_2= \mathcal{H}_2\ni 0.$$
Thus the first order optimality condition for the primal problem~\eqref{convex optimization} followed by the subdifferential sum rule (Theorem~\ref{sum rule of subdifferential for two functions}) gives
\begin{equation}\label{eq:fop}
    0\in\partial(f+g\circ C)(u)=\partial f(u)+C^*\partial g(Cu).
\end{equation}
The inclusion \eqref{eq:fop} holds precisely when there exists $v\in\partial g(Cu)$ such that $0\in\partial f(u)+C^*v$. Since $(\partial g)^{-1}=\partial g^*$ by \cite[p.~216]{rockafellar1970maximal}, $v\in \partial g(Cu)\iff Cu\in (\partial g)^{-1}(v)=\partial g^*(v)$ is equivalent to $0\in\partial g^*(v)-Cu$. Combining these two inclusions into a single system gives the monotone inclusion
\begin{equation}\label{inclusion for f and g}
    \begin{pmatrix}
        0\\0
    \end{pmatrix}\in\begin{pmatrix}
        0&C^*\\-C&0
    \end{pmatrix}\begin{pmatrix}
        u\\v
    \end{pmatrix}+\begin{pmatrix}
        \partial f(u)\\\partial g^*(v)
    \end{pmatrix}.
\end{equation}
Since $f_{2},\dots,f_{n-1}$ are differentiable with full domain, we have $\partial f=\sum_{i=2}^{n-1}\nabla f_i+\partial f_n$ by Theorem~\ref{sum rule of subdifferential for two functions}. Similarly, $g^*_{2},\dots,g^*_{n-1}$ are differentiable by \cite[Theorem 2.1]{bauschke2009baillon}, thus Theorems~\ref{remark infimal convolution}~\&~\ref{sum rule of subdifferential for two functions} yield $\partial g^*=(g_1\Box\cdots\Box g_n)^*=\sum_{i=2}^{n-1}\nabla g_i^*+\partial g_n^*$. Substituting these two identities into \eqref{inclusion for f and g} gives
\begin{equation}\label{n monotone}
    \begin{pmatrix}
        0\\0
    \end{pmatrix}\in\begin{pmatrix}
        0&C^*\\-C&0
    \end{pmatrix}\begin{pmatrix}
        u\\v
    \end{pmatrix}+\sum_{i=2}^{n-1}\begin{pmatrix}
        \nabla f_{i}(u)\\\nabla g_{i}^*(v)\end{pmatrix}+\begin{pmatrix}
        \partial f_{n}(u)\\\partial g_{n}^*(v)
    \end{pmatrix}.
\end{equation}
In other words, \eqref{n monotone} is a monotone inclusion with $n$ operators in the form of \eqref{eq:1n} with
\begin{equation} \label{monotone operators*}
    x=(u,v),\quad \mathcal{H}=\mathcal{H}_{1}\times\mathcal{H}_{2},\quad  A_{1}=\begin{pmatrix}
        0&C^*\\-C&0
    \end{pmatrix}, \quad A_{i}=\begin{pmatrix}
        \nabla f_{i}\\\nabla g_{i}^*\end{pmatrix},  \quad A_{n}=\begin{pmatrix}
        \partial f_{n}\\\partial g_{n}^*
    \end{pmatrix}.
\end{equation}

The following lemma summarises properties of the operators defined in \eqref{monotone operators*}.
\begin{lemma}\label{lemma primal dual splitting}
Let $A_1,\dots,A_n$ be the operators defined in \eqref{monotone operators*}, and suppose Assumption~\ref{assumption} holds. Then the following assertions hold.
    \begin{enumerate}[(i)]
        \item $A_{1}$ is maximally monotone and $\|C\|$-Lipschitz.
        \item $A_{i}$ is maximally monotone and $L$-Lipschitz with $L=\max\{\alpha,\frac{1}{\tau}\} \text{ for }i=2,\dots,n-1$.
        \item $A_{n}$ is maximally $\mu$-strongly monotone with $\mu=\min\{\sigma, \frac{1}{\beta}\}$.
    \end{enumerate}
\end{lemma}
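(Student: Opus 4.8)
The plan is to verify each of the three claimed properties of the operators in \eqref{monotone operators*} separately, relying on standard facts about skew-symmetric operators, separable product operators, and the relationship between strong convexity and Lipschitz gradients.

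\emph{For (i):} The operator $A_1$ is linear and skew-symmetric, i.e. $A_1^* = -A_1$. Skew-symmetry gives $\langle (u,v), A_1(u,v)\rangle = 0$ for all $(u,v)$, so $A_1$ is monotone; since it is moreover single-valued, continuous and linear with full domain, it is maximally monotone (e.g. \cite[Example 20.35]{bauschke2011convex} or \cite[Corollary 20.28]{bauschke2011convex}). For the Lipschitz constant, I would compute $\|A_1(u,v)-A_1(u',v')\|^2 = \|C^*(v-v')\|^2 + \|C(u-u')\|^2 \leq \|C\|^2\bigl(\|v-v'\|^2 + \|u-u'\|^2\bigr) = \|C\|^2\|(u,v)-(u',v')\|^2$, using $\|C^*\| = \|C\|$.

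\emph{For (ii):} By Assumption~\ref{assumption}\hyperref[assumption]{(ii)}, $\nabla f_i$ is monotone (gradient of a convex function) and $\alpha$-Lipschitz. By Assumption~\ref{assumption}\hyperref[assumption]{(iv)}, $g_i$ is proper, closed and $\tau$-strongly convex, so by \cite[Theorem 2.1]{bauschke2009baillon} (cited already in the paper) $g_i^*$ is differentiable with $\frac{1}{\tau}$-Lipschitz gradient, and $\nabla g_i^*$ is monotone. Since $A_i$ acts as $(u,v)\mapsto(\nabla f_i(u), \nabla g_i^*(v))$ in the product space, monotonicity is inherited coordinatewise, maximality follows because a monotone operator that is single-valued and continuous with full domain is maximally monotone, and the Lipschitz estimate is $\|A_i(u,v)-A_i(u',v')\|^2 = \|\nabla f_i(u)-\nabla f_i(u')\|^2 + \|\nabla g_i^*(v)-\nabla g_i^*(v')\|^2 \leq \max\{\alpha^2,\tfrac{1}{\tau^2}\}\|(u,v)-(u',v')\|^2$, giving $L = \max\{\alpha, \tfrac{1}{\tau}\}$.

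\emph{For (iii):} By Assumption~\ref{assumption}\hyperref[assumption]{(iii)}, $f_n$ is proper, closed and $\sigma$-strongly convex, so $\partial f_n$ is maximally $\sigma$-strongly monotone (e.g. \cite[Example 22.4]{bauschke2011convex} together with the subdifferential characterization of strong convexity). By Assumption~\ref{assumption}\hyperref[assumption]{(v)}, $g_n$ is convex and differentiable with $\beta$-Lipschitz gradient, so by the Baillon--Haddad theorem $\nabla g_n$ is $\tfrac{1}{\beta}$-cocoercive; dualizing, $g_n^*$ is $\tfrac{1}{\beta}$-strongly convex (this is the conjugate counterpart of \cite[Theorem 2.1]{bauschke2009baillon}), hence $\partial g_n^*$ is maximally $\tfrac{1}{\beta}$-strongly monotone. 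Then for $(x_n, u_n)\in\gra\partial f_n$, $(y_n,v_n)\in\gra\partial f_n$ and analogous pairs in $\gra\partial g_n^*$, adding the two strong-monotonicity inequalities gives $\langle (u-u', v-v'), A_n(u,v) - A_n(u',v')\rangle \geq \sigma\|u-u'\|^2 + \tfrac{1}{\beta}\|v-v'\|^2 \geq \min\{\sigma,\tfrac{1}{\beta}\}\|(u,v)-(u',v')\|^2$, so $A_n$ is $\mu$-strongly monotone with $\mu = \min\{\sigma,\tfrac{1}{\beta}\}$; maximality of the separable sum of two maximally monotone operators is standard (e.g. \cite[Proposition 20.23]{bauschke2011convex}).

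\emph{Main obstacle.} The only slightly delicate point is the passage in (iii) from "$g_n$ convex differentiable with $\beta$-Lipschitz gradient" to "$g_n^*$ is $\tfrac{1}{\beta}$-strongly convex," since the paper has so far only cited the implication in the other direction; I would invoke the Baillon--Haddad theorem and the conjugacy between cocoercivity of $\nabla g_n$ and strong convexity of $g_n^*$ (e.g. \cite[Theorem 18.15]{bauschke2011convex}). Everything else is a routine bookkeeping exercise in products of Hilbert spaces.
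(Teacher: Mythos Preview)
Your proposal is correct and follows essentially the same route as the paper: skew-symmetry for (i), the Baillon--Haddad/conjugacy correspondence \cite[Theorem~2.1]{bauschke2009baillon} for the Lipschitz and strong-convexity dualities in (ii) and (iii), and the separable product structure to combine the coordinatewise estimates. The ``obstacle'' you flag is not really one---the paper invokes the same reference \cite[Theorem~2.1]{bauschke2009baillon} for both directions of the equivalence, so the passage from $\beta$-Lipschitz $\nabla g_n$ to $\tfrac{1}{\beta}$-strong convexity of $g_n^*$ is handled by exactly the citation already in use.
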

\begin{proof}
(i):~Since $A_{1}$ is skew-symmetric, \cite[Example~20.30]{bauschke2011convex} implies that $A_{1}$ is maximally monotone. For all $(u,v),(u',v')\in\mathcal{H}_{1}\times\mathcal{H}_{2}$, we have 
\begin{align*}
\|A_1(u,v)-A_1(u',v')\|^2 
&= \|C^*(v-v')\|^2+\|C(u-u')\|^2 \\
&\leq \|C^*\|^2\|v-v'\|^2+\|C\|^2\|u-u'\|=\|C\|^2\|(u,v)-(u',v')\|^2,
\end{align*}
where the last equality uses the identity $\|C^*\|=\|C\|$. Hence, $A_{1}$ is $\|C\|$-Lipschitz.
(ii):~By Assumption~\ref{assumption}\hyperref[assumption]{(iv)} and \cite[Theorem~2.1]{bauschke2009baillon},  $g^*_{i}$ is differentiable with $\frac{1}{\tau}$-Lipschitz gradients for $i=2,\dots,n-1$. Therefore, $A_{i}$ is maximally monotone from \cite[Corollary~20.25]{bauschke2011convex} and $L$-Lipschitz continuous with $L=\max\{\alpha,\frac{1}{\tau}\}$.
(iii):~By Assumption~\ref{assumption}\hyperref[assumption]{(iii)} and \cite[Theorem~20.40~\&~Example~22.3(iv)]{bauschke2011convex}, $\partial f_{n}$ is maximally $\sigma$-strongly monotone.  By Assumption~\ref{assumption}\hyperref[assumption]{(v)} and \cite[Theorem~2.1]{bauschke2009baillon}, $\partial g^*_{n}$ is maximally $\frac{1}{\beta}$-strongly monotone. Therefore, $A_{n}$ is maximally $\mu$-strongly monotone with $\mu=\min\{\sigma, \frac{1}{\beta}\}$. 
\end{proof}

Let $\gamma\in(0,1)$. When applied to \eqref{monotone operators*}, the fixed point operator from Section~\ref{s:resolvent splitting} (given in \eqref{eq: fixed point operator} and \eqref{eq: def of x}) becomes
\begin{equation} \label{new algorithm def}
\begin{aligned}
    T_{\rm PD}&\begin{pmatrix}\mathbf{p},\mathbf{q}\end{pmatrix}=\begin{pmatrix}\mathbf{p},\mathbf{q}\end{pmatrix}+\gamma
\begin{pmatrix}
\begin{pmatrix}
   u_{2},v_{2} 
\end{pmatrix}-\begin{pmatrix}
   u_{1},v_{1} 
\end{pmatrix}\\\begin{pmatrix}
   u_{3},v_{3} 
\end{pmatrix}-\begin{pmatrix}
   u_{2},v_{2} 
\end{pmatrix}\\\vdots\\\begin{pmatrix}
   u_{n},v_{n} 
\end{pmatrix}-\begin{pmatrix}
   u_{n-1},v_{n-1}
\end{pmatrix}
\end{pmatrix},
\end{aligned}
\end{equation}
where $\mathbf{u}=(u_{1},\dots, u_{n})\in{\mathcal{H}}_{1}^n$ and $\mathbf{v}=(v_{1},\dots, v_{n})\in\mathcal{H}_{2}^n$ are given by
\begin{equation} \label{new x def}
\left\{\begin{aligned} 
\begin{pmatrix}
    u_{1}\\v_{1}
\end{pmatrix}&=J_{A_{1}}\begin{pmatrix}
        p_{1}\\q_{1}
    \end{pmatrix}\\
    \begin{pmatrix}
    u_{i}\\v_{i}
\end{pmatrix}&=J_{A_{i}}\begin{pmatrix}
    p_{i}+u_{i-1}-p_{i-1}\\q_{i}+v_{i-1}-q_{i-1}
\end{pmatrix}&\forall i\in \{2,\dots,n-1\}\\
\begin{pmatrix}
    u_{n}\\v_{n}
\end{pmatrix}&=J_{A_{n}}\begin{pmatrix}
    u_{1}+u_{n-1}-p_{n-1}\\v_{1}+v_{n-1}-q_{n-1}
\end{pmatrix}.
\end{aligned}\right.
\end{equation}

We now look at how to compute the resolvents in \eqref{new x def}. Using the standard formula for the inverse of a $2\times 2$ block matrix  (see, for example, \cite[Section 2.1]{o2014primal}), the resolvent of $A_{1}$ can be expressed as
\begin{align*}
    J_{{A}_{1}}=\begin{pmatrix}
        \Id&C^*\\-C&\Id
    \end{pmatrix}^{-1} = \begin{pmatrix}
        0&0\\0&\Id
    \end{pmatrix}+\begin{pmatrix}
        \Id\\C
    \end{pmatrix}(\Id+C^*C)^{-1}\begin{pmatrix}
        \Id\\-C
    \end{pmatrix}^*.
\end{align*}
For $i\in\{2,\dots,n\}$, by using the Moreau decomposition for $g_i$ (Theorem~\ref{Moreau decomposition}), we deduce the resolvent of $A_{i}$
$$ J_{A_i} = \binom{\prox_{f_i}}{\prox_{g^*_i}} = \binom{\prox_{f_i}}{\Id-\prox_{g_i}}. $$
The algorithm obtained by putting this altogether is described in Algorithm~\ref{alg:PD}, and its convergence is analyzed in Corollary~\ref{corollary primal dual linear convergence}.

\SetKwComment{Comment}{/* }{ */}
\RestyleAlgo{ruled}
\begin{algorithm}[!htb]
\caption{A primal-dual algorithm for solving \eqref{convex optimization} and \eqref{convex optimization dual}.\label{alg:PD}}
\KwIn{Choose $\mathbf{p}^0=(p^0_{1},\dots,p^0_{n-1})\in\mathcal{H}^{n-1}_{1}$, $\mathbf{q}^0=(q^0_{1},\dots,q^0_{n-1})\in\mathcal{H}^{n-1}_{2}$ and $\gamma\in(0,1)$.}
\For{$k=1,2\dots$}{Compute $\mathbf{u}^k=(u^k_{1},\dots, u^k_{n})\in{\mathcal{H}}_{1}^n$ and $\mathbf{v}^k=(v^k_{1},\dots, v^k_{n})\in\mathcal{H}_{2}^n$ according to
\begin{equation} \label{new x}
\left\{\begin{aligned} 
u^k_{1}&=(\Id+C^*C)^{-1}(p_1^k-C^*q_1^k)\\
v^k_{1}&=q_1^k+Cu_1^k\\
    u^k_{i}&=\prox_{f_{i}}(p^k_{i}+u^k_{i-1}-p^k_{i-1})&\forall i\in \{2,\dots,n-1\}\\
    v^k_{i}&=(\Id-\prox_{g_{i}})(q^k_{i}+v^k_{i-1}-q^k_{i-1})&\forall i\in \{2,\dots,n-1\}\\
    u^k_{n}&=
    \prox_{{f}_{n}}(u^k_{1}+u^k_{n-1}-p^k_{n-1})\\
    v^k_{n}&=(\Id-\prox_{g_{n}})(v^k_{1}+v^k_{n-1}-q^k_{n-1}).
\end{aligned}\right.
\end{equation}
Update $\mathbf{p}^{k}=(p^k_{1},\dots, p^k_{n-1})\in \mathcal{H}_{1}^{n-1}$ and $\mathbf{q}^{k}=(q^k_{1},\dots, q^k_{n-1})\in \mathcal{H}_{2}^{n-1}$ according to
\begin{equation} \label{new algorithm}
    \mathbf{p}^{k+1}=\mathbf{p}^k+\gamma
\begin{pmatrix}
   u^{k}_{2} -
   u^{k}_{1}\\
   u^{k}_{3}-
   u^{k}_{2}\\\vdots\\
   u^{k}_{n}-
   u^{k}_{n-1}
\end{pmatrix},
\qquad \mathbf{q}^{k+1}=\mathbf{q}^k+\gamma\begin{pmatrix}
   v^{k}_{2} -
   v^{k}_{1}\\
   v^{k}_{3}-
   v^{k}_{2}\\\vdots\\
   v^{k}_{n}-
   v^{k}_{n-1}
\end{pmatrix}.
\end{equation}}
\end{algorithm}

Let $u^*$ be the (unique) solution of the primal problem~\eqref{convex optimization}, and $v^*$ be the unique solution of the dual problem~\eqref{convex optimization dual}. Then the \emph{primal-dual gap} given by
\begin{equation}\label{primal-dual gap}
    G(u,v)=\left( \sum_{i=2}^{n}f_{i}(u)+\langle Cu,v^*\rangle - \sum_{i=2}^{n}g_i^*(v^*)\right)-\left( \sum_{i=2}^{n}f_{i}(u^*)+\langle Cu^*,v\rangle - \sum_{i=2}^{n}g_i^*(v)\right).
\end{equation}
Now we are ready for the main result of this section.
\begin{corollary}\label{corollary primal dual linear convergence}
    Suppose Assumption~\eqref{assumption} holds. Given $(\mathbf{p}^0,\mathbf{q}^0)\in\mathcal{H}_{1}^{n-1}\times\mathcal{H}_{2}^{n-1}$, let $(\mathbf{p}^k,\mathbf{q}^k)\in\mathcal{H}_{1}^{n-1}\times\mathcal{H}_{2}^{n-1}$ and $(\mathbf{u}^k,\mathbf{v}^k)\in\mathcal{H}_{1}^n\times\mathcal{H}^n_{2}$ be the sequences given by \eqref{new x} and \eqref{new algorithm} in Algorithm~\ref{alg:PD}. Then the following assertions hold:
    \begin{enumerate}[(i)]
        \item $(\mathbf{p}^k,\mathbf{q}^k)_{k\in\mathbb{N}}$ converges $R$-linearly to the unique fixed point $(\mathbf{p^*},\mathbf{q^*})\in\Fix T_{\rm PD}$. 
        \item $(\mathbf{u}^k)_{k\in\mathbb{N}}$ converges $R$-linearly to   $(u^*,\dots, u^*)\in\mathcal{H}^n_{1}$  where $u^*$ is the unique solution of the primal problem~\eqref{convex optimization}.
        \item $(\mathbf{v}^k)_{k\in\mathbb{N}}$ converges $R$-linearly to $(v^*,\dots, v^*)\in\mathcal{H}^n_{2}$ where $v^*$ is the unique solution of the dual problem~\eqref{convex optimization dual}.
        \item $G(u^k_n,v^k_n)$ converges $R$-linearly to zero.
    \end{enumerate}
\end{corollary}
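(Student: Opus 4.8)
The plan is to read assertions (i)--(iii) directly off Theorem~\ref{theorem for linear convergence} and then to derive (iv) from the $R$-linear convergence of $(\mathbf u^k)$ and $(\mathbf v^k)$ via Proposition~\ref{lemma for gap}. For (i)--(iii): Algorithm~\ref{alg:PD} is exactly the iteration \eqref{eq:1}--\eqref{eq:2} applied to the operators in \eqref{monotone operators*} on the product space $\mathcal H=\mathcal H_1\times\mathcal H_2$, under the identifications $\mathbf z^k=(\mathbf p^k,\mathbf q^k)$ and $\mathbf x^k=((u^k_1,v^k_1),\dots,(u^k_n,v^k_n))$ --- the block-inverse formula for $J_{A_1}$ and the Moreau decomposition for $J_{A_i}$ recorded before the corollary show that \eqref{new x} realizes the resolvent steps of \eqref{eq: def of x}, and \eqref{new algorithm} is \eqref{eq: fixed point operator}. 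By Lemma~\ref{lemma primal dual splitting}, $A_1,\dots,A_{n-1}$ are maximally monotone and $L'$-Lipschitz with $L':=\max\{\|C\|,\alpha,1/\tau\}$ while $A_n$ is maximally $\mu$-strongly monotone, so hypothesis~(a) of Theorem~\ref{theorem for linear convergence} holds. This gives (i), and that $(\mathbf x^k)$ converges $R$-linearly to $(x^*,\dots,x^*)$ with $x^*=(u^*,v^*)$ the unique zero of $\sum_{i=1}^nA_i$; projecting onto each factor of $\mathcal H_1\times\mathcal H_2$ preserves the $R$-linear rate and yields the stated convergence of $(\mathbf u^k)$ and $(\mathbf v^k)$. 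Finally, $\zer(\sum_{i=1}^nA_i)$ is the solution set of \eqref{n monotone} (equivalently \eqref{inclusion for f and g}), so $(u^*,v^*)$ satisfies the optimality condition \eqref{eq:fop} for \eqref{convex optimization} and the corresponding optimality condition for the dual \eqref{convex optimization dual}; strong convexity of $f$ (inherited from $f_n$) and of the dual objective (inherited from $g_n^*$, which is $1/\beta$-strongly convex) forces uniqueness, identifying $u^*$ and $v^*$ as claimed in (ii)--(iii).

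For (iv), set $\mathcal L(u,v):=\sum_{i=2}^nf_i(u)+\langle Cu,v\rangle-\sum_{i=2}^ng_i^*(v)$, so that $G(u,v)=\mathcal L(u,v^*)-\mathcal L(u^*,v)$ and $G(u^*,v^*)=0$. Expanding,
\begin{equation*}
G(u^k_n,v^k_n)=\sum_{i=2}^n\bigl(f_i(u^k_n)-f_i(u^*)\bigr)+\bigl\langle C(u^k_n-u^*),v^*\bigr\rangle+\bigl\langle Cu^*,v^*-v^k_n\bigr\rangle+\sum_{i=2}^n\bigl(g_i^*(v^k_n)-g_i^*(v^*)\bigr).
\end{equation*}
The two inner-product terms are $O(\|u^k_n-u^*\|+\|v^k_n-v^*\|)$, hence $R$-linearly null by (ii)--(iii), and a finite sum of $R$-linearly null sequences is again $R$-linearly null; so it suffices to show $f_i(u^k_n)\to f_i(u^*)$ and $g_i^*(v^k_n)\to g_i^*(v^*)$ $R$-linearly for each $i\in\{2,\dots,n\}$, which I would do via Proposition~\ref{lemma for gap}. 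For $i\in\{2,\dots,n-1\}$ this is immediate: $f_i$ is differentiable with continuous gradient and $(u^k_n)$ converges, so $(\nabla f_i(u^k_n))$ is a bounded sequence of subgradients at $u^k_n$ while $\partial f_i(u^*)=\{\nabla f_i(u^*)\}\neq\emptyset$; the same applies to $g_i^*$, differentiable with $1/\tau$-Lipschitz gradient by \cite[Theorem~2.1]{bauschke2009baillon}.

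The one genuinely nonsmooth case is the terminal pair $f_n$, $g_n^*$, which I expect to be the main (if modest) obstacle. Here I would extract bounded subgradients from the last line of the resolvent step: since $x^k_n=J_{A_n}(x^k_1+x^k_{n-1}-z^k_{n-1})$, the resolvent identity gives $(x^k_1+x^k_{n-1}-z^k_{n-1})-x^k_n\in A_n(x^k_n)$, that is, $u^k_1+u^k_{n-1}-p^k_{n-1}-u^k_n\in\partial f_n(u^k_n)$ and $v^k_1+v^k_{n-1}-q^k_{n-1}-v^k_n\in\partial g_n^*(v^k_n)$. These sequences are bounded because $(\mathbf u^k),(\mathbf v^k),(\mathbf p^k),(\mathbf q^k)$ all converge, and $\partial f_n(u^*)\ni u^*-p^*_{n-1}$ and $\partial g_n^*(v^*)\ni v^*-q^*_{n-1}$ are nonempty by the fixed-point relations of Lemma~\ref{new lemma} applied at $(\mathbf p^*,\mathbf q^*)\in\Fix T_{\rm PD}$. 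Proposition~\ref{lemma for gap} then yields $f_n(u^k_n)\to f_n(u^*)$ and $g_n^*(v^k_n)\to g_n^*(v^*)$ $R$-linearly, completing the argument. Apart from this step, the proof is bookkeeping on top of Theorem~\ref{theorem for linear convergence}.
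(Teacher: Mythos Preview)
Your proposal is correct and follows essentially the same route as the paper: you reduce (i)--(iii) to Theorem~\ref{theorem for linear convergence}(a) via Lemma~\ref{lemma primal dual splitting}, and for (iv) you decompose $G$ term-by-term, handling the bilinear pieces by Cauchy--Schwarz, the smooth $f_i,g_i^*$ via bounded gradients and Proposition~\ref{lemma for gap}, and the nonsmooth $f_n,g_n^*$ by extracting bounded subgradients from the last resolvent step together with the fixed-point relation $u^*-p^*_{n-1}\in\partial f_n(u^*)$ from Lemma~\ref{new lemma}. This is exactly the paper's argument; the only cosmetic difference is that the paper cites \cite[Theorem~19.1]{bauschke2011convex} for the primal--dual optimality of $(u^*,v^*)$ where you invoke \eqref{eq:fop} directly.
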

\begin{proof}
(i)-(iii):~By Lemma~\ref{lemma primal dual splitting}, the  operators $A_1,\dots,A_n$ satisfy the assumptions of Theorem~\ref{theorem for linear convergence}\hyperref[theorem for linear convergence]{(a)}. Thus, by applying  Theorem~\ref{theorem for linear convergence}, we have that (i) holds, that $(\mathbf{u}^k,\mathbf{v}^k)$ converges $R$-linearly to a point $((u^*,\dots,u^*),(v^*,\dots,v^*))$ such that $(u^*,v^*)\in\zer(\sum_{i=1}^nA_i)$, and that \eqref{eq: def of x^*} holds. The latter together with  \cite[Theorem~19.1]{bauschke2011convex} implies that $u^*$ solves the primal problem~\eqref{convex optimization} and $v^*$ solves the dual problem~\eqref{convex optimization dual}. Since both primal and dual problems are strongly convex, their minimizers are unique. (iv):~To establish the result, we analyze each term in $G$ separately. First note that
\begin{equation}\label{inequality for inner}
\begin{aligned}
    \left|\langle Cu_n^k,v^*\rangle- \langle Cu^*,v_n^k\rangle\right|
    &= 
\left| \langle Cu^k_n,v^*\rangle - \langle Cu^*,v^*\rangle + \langle Cu^*,v^*\rangle -\langle Cu^*,v^k_n\rangle\right| \\
&= \left|\langle u^k_n-u^*,C^*v^*\rangle + \langle Cu^*,v^*-v^k_n\rangle \right| \\
&\leq \left|\langle u^k_n-u^*,C^*v^*\rangle \right|+\left|\langle Cu^*,v^*-v^k_n\rangle\right|\\
&\leq \frac{1}{2}\|u^k_n-u^*\|\|C^*v^*\|+\frac{1}{2}\|Cu^*\|\|v^k_n-v^*\|,
\end{aligned}
\end{equation}
from which follows that $(\langle Cu_n^k,v^*\rangle- \langle Cu^*,v_n^k\rangle)_{k\in\mathbb{N}}$ converges $R$-linearly to zero.

Next, since $f_{i}$ is convex and differentiable with $\alpha$-Lipschitz gradient for $i\in\{2,\dots,{n-1}\}$, we have
$$ \|\nabla f_i(u_n^k)\| \leq \|\nabla f_i(u_n^k)-\nabla f_i(u^*)\|+\|\nabla f_i(u^*)\| \leq \alpha \|u_n^k-u^*\|+ \|\nabla f_i(u^*)\|. $$
Since $(u^k_n)$ converge $R$-linearly to $u^*$ by (ii), it follows $(\nabla f_i(u_n^k))$ is bounded. By applying  Lemma~\ref{lemma for gap}, it follows that  $f_{i}(u^k_{n})$ converge $R$-linearly to $f_{i}(u^*)$. Since $g^*_{i}$ is convex and differentiable with $\alpha$-Lipschitz gradient for $i\in\{2,\dots,{n-1}\}$, an analogous argument shows that $g^*_{i}(v^k_{n})$ converge $R$-linearly $g^*_{i}(v^*)$.

Due to \eqref{new x}, we have $u^k_{1}+u^k_{n-1}-u^k_{n}-p^k_{n-1}\in\partial f_{n}(u^k_{n})$ which is a bounded sequence as the sum of convergent  sequences, and, due to \eqref{eq: def of x^*}, $u^*-p^*_{n-1}\in\partial f_{n}(u^*)$. Thus, by applying  Lemma~\ref{lemma for gap}, it follows that  $f_{n}(u^k_{n})$ converge $R$-linearly to $f_{n}(u^*)$. An analogous argument shows that $g^*_{n}(v^k_{n})$ converge $R$-linearly $g^*_{n}(v^*)$. As all terms in $G$ converge $R$-linearly, and so the result follows.
\end{proof}

\section{Numerical Experiment: Image denoising}\label{s: Experiment}
In this section, we give a numerical illustration of the Algorithm~\ref{alg:PD} applied to image denoising. All computations are run in Python 3.12.0 on a MacBook Pro machine equipped with 16GB memory and an Apple M1 Pro Chip.

Let $u$ denote an $M\times M$ gray scale image, where pixel values range from 0 to 1 (with 0 representing black and 1 representing white), represented by a vector in $\mathbb{R}^m$ with $m=M\times M$ which is to be recovered from a noisy observation $b\in\mathbb{R}^m$.  In our numerical experiments, the observed image $b$ is generated by adding Gaussian noise to the true image.

Let $\lambda_1, \lambda_2, \lambda_3, \lambda_4>0$. To recover $u$ from $b$, we consider the  minimization problem~\cite[Section 6.2]{aragon2021strengthened}
\begin{equation}\label{delburring problem}
    \min_{u\in\mathbb{R}^{m}}\frac{1}{2}\|u-b\|^2+\frac{\lambda_{1}}{2}\|u\|^2+\left((\lambda_{2}\|\cdot\|_{\iso}+\frac{\lambda_3}{2}\|\cdot\|^2) \Box (\frac{\lambda_4}{2}\|\cdot\|^2)\right)(Du),
\end{equation}
where $\|\binom{v^1}{v^2}\|_{\iso}:=\sum_{i=1}^m\sqrt{(v^1_i)^2+(v^2_i)^2}$ and $D\in\mathbb{R}^{2m\times m}$ denotes the \emph{discrete gradient} given by 
\begin{equation*}
    D=\begin{pmatrix}
    \Id\otimes D_{1}\\D_{1}\otimes\Id
\end{pmatrix},\qquad D_{1}=\begin{pmatrix}
        -1&1&0&\cdots&0&0\\
        0&-1&1&\cdots&0&0\\
        \vdots&\vdots&\vdots&\ddots&\vdots&\vdots\\
        0&0&0&\cdots&-1&1\\
        0&0&0&\cdots&0&0\\
    \end{pmatrix}\in\mathbb{R}^{M\times M}.
\end{equation*} 

Here $\otimes$ denotes the Kronecker product.
Under this notation, $u\mapsto \|Du\|_{\iso}$ is the \emph{discrete isotropic total variation (TV) norm} introduced in \cite{rudin1992nonlinear}. In~\eqref{delburring problem}, the first term $\frac{1}{2}\|u-b\|^2$ ensures data fidelity, and the term $\|Du\|_{\iso}+\frac{1}{2}\|Du\|^2$ ensures the reconstruction has small total variation which promotes ``sharp edges'' in the image. The terms $\frac{1}{2}\|u\|^2$ and $\frac{1}{2}\|D u\|^2$ are regularizers needed in the setting of our algorithm. The parameters $\lambda_1,\lambda_2,\lambda_3, \lambda_4>0$ are used to control the relative importance of each of these components.
Problem~\eqref{delburring problem} can be seen as a special case of problem \eqref{convex optimization} with $n=3$, $\mathcal{H}_{1}=\mathbb{R}^m$, $\mathcal{H}_{2}=\mathbb{R}^{2m}$ and 
$$f_2(u)=\frac{1}{2}\|u-b\|^2, \quad f_3(u)=\frac{\lambda_{1}}{2}\|u\|^2, \quad g_{2}(v)=\lambda_{2}\|v\|_{\iso}+ \frac{\lambda_{3}}{2}\|v\|^2, \quad g_{3}(v)=\frac{\lambda_4}{2}\|v\|^2,\quad C=D.$$
Here, $f_{2}$ is differentiable with $1$-Lipschitz continuous gradient, $f_{3}$ is $\lambda_{1}$-strongly convex, $g_{2}$ is $\lambda_{3}$-strongly convex, and $g_{3}$ is differentiable with $\lambda_4$-Lipschitz continuous gradient. We can therefore apply Algorithm~\ref{alg:PD}, for which Corollary~\ref{corollary primal dual linear convergence} guarantees linear convergence to the solution of \eqref{delburring problem}
 To do so, it is necessary to compute the proximal operators appearing \eqref{new x}. In what follows, we collect expressions for these operators. 
\begin{enumerate}[(i)]
    \item %According to  \cite[p.~526]{o2017total}, 
    The proximal operator of the functions $f_2$, $f_3$, $g_3$, respectively, are given by
$$\prox_{f_{2}}(u)=\frac{u+b}{2}, \quad \prox_{f_{3}}(u)=\frac{u}{1+\lambda_{1}}, \quad\prox_{g_3}(v)=\frac{v}{1+\lambda_4}.$$

\item Denote $y=\binom{v^1}{v^2}\in\mathbb{R}^{2m}$. By combining~\cite[Proposition 23.29(i)]{bauschke2011convex} and \cite[p.~1727]{o2014primal},  we obtain
$$\prox_{g_2}(v)=\prox_{\frac{\lambda_2}{\lambda_3+1}\|\cdot\|_{\iso}}\left(\frac{1}{\lambda_3+1}v\right) = \frac{1}{\lambda_3+1}\binom{\alpha\odot v^1}{\alpha\odot v^2},$$
where $\odot$ denotes the Hadamard product and $\alpha\in\mathbb{R}^{m}$ is given by
 $$ \alpha_i = \begin{cases}
 1-\dfrac{\lambda_2}{\sqrt{({v^1_{i}})^2+({v^2_{i}})^2}} & \text{if } \sqrt{({v^1_{i}})^2+({v^2_{i}})^2}>{\lambda_2}, \\
 0 & \text{otherwise.}
               \end{cases} $$
\end{enumerate}
\begin{table}[b]
    \centering
    \renewcommand{\arraystretch}{1.2}
    \caption{The effect of different values of $\gamma$ for Algorithm~\ref{alg:PD} after 100 iterations.}
    \begin{tabular}{|c|c|c|c|}
    \hline
    $\gamma$ & $\frac{\|(\mathbf{p}^k,\mathbf{q}^k)-(\mathbf{p}^{k-1},\mathbf{q}^{k-1})\|}{m}$ & SNR \\
    \hline
    0.01 & $1.10\times10^{-5}$ & 1.72\\
    0.05 & $1.81\times10^{-5}$ & 6.52 \\
    0.1  & $8.49\times 10^{-6}$ & 10.56\\
    0.5  & $2.14\times 10^{-7}$ & 12.10\\
    0.9  & $4.51\times 10^{-8}$ & 12.10\\
    0.99 & $3.19\times 10^{-8}$ & 12.10\\
    \hline
    \end{tabular}
    \label{tab:different_values_of_gamma}
\end{table}
\begin{figure}[t]
    \centering
    \begin{subfigure}[b]{0.3\textwidth}
    \includegraphics[width=0.95\textwidth]{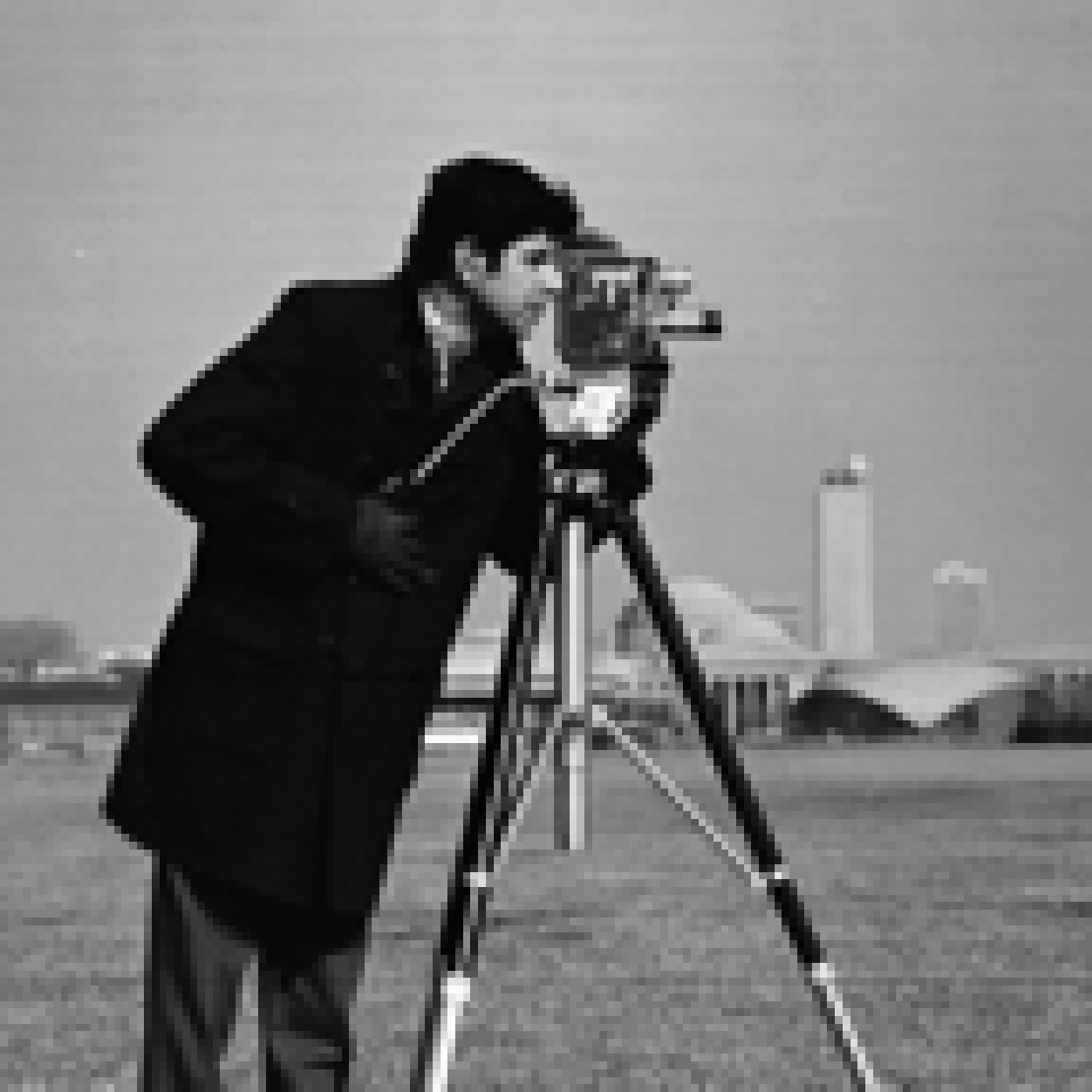}
    \caption{Original image}
    \end{subfigure}
    \begin{subfigure}[b]{0.3\textwidth}
    \includegraphics[width=0.95\textwidth]{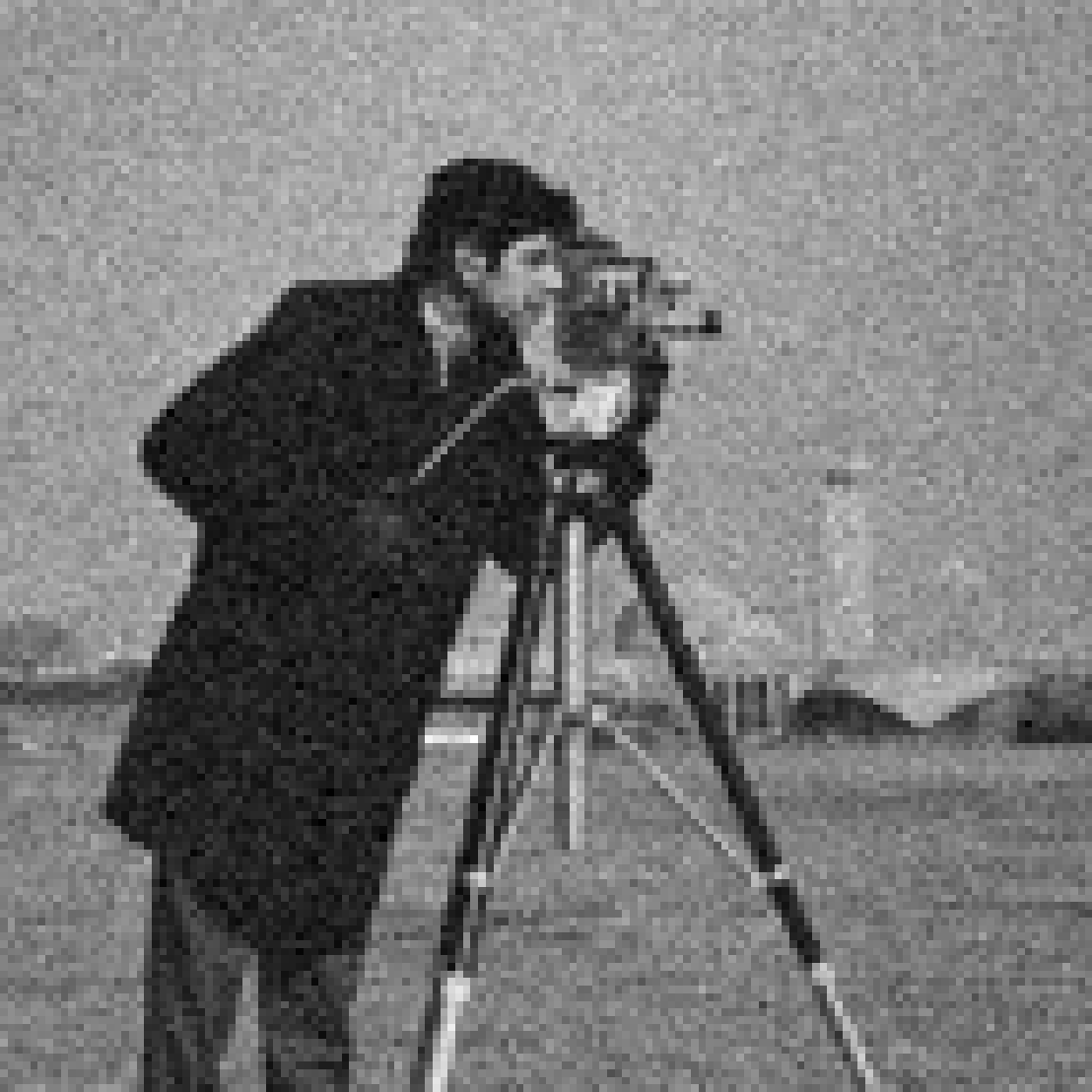}
    \caption{Noisy image}
    \end{subfigure}
    \begin{subfigure}[b]{0.3\textwidth}
    \includegraphics[width=0.95\textwidth]{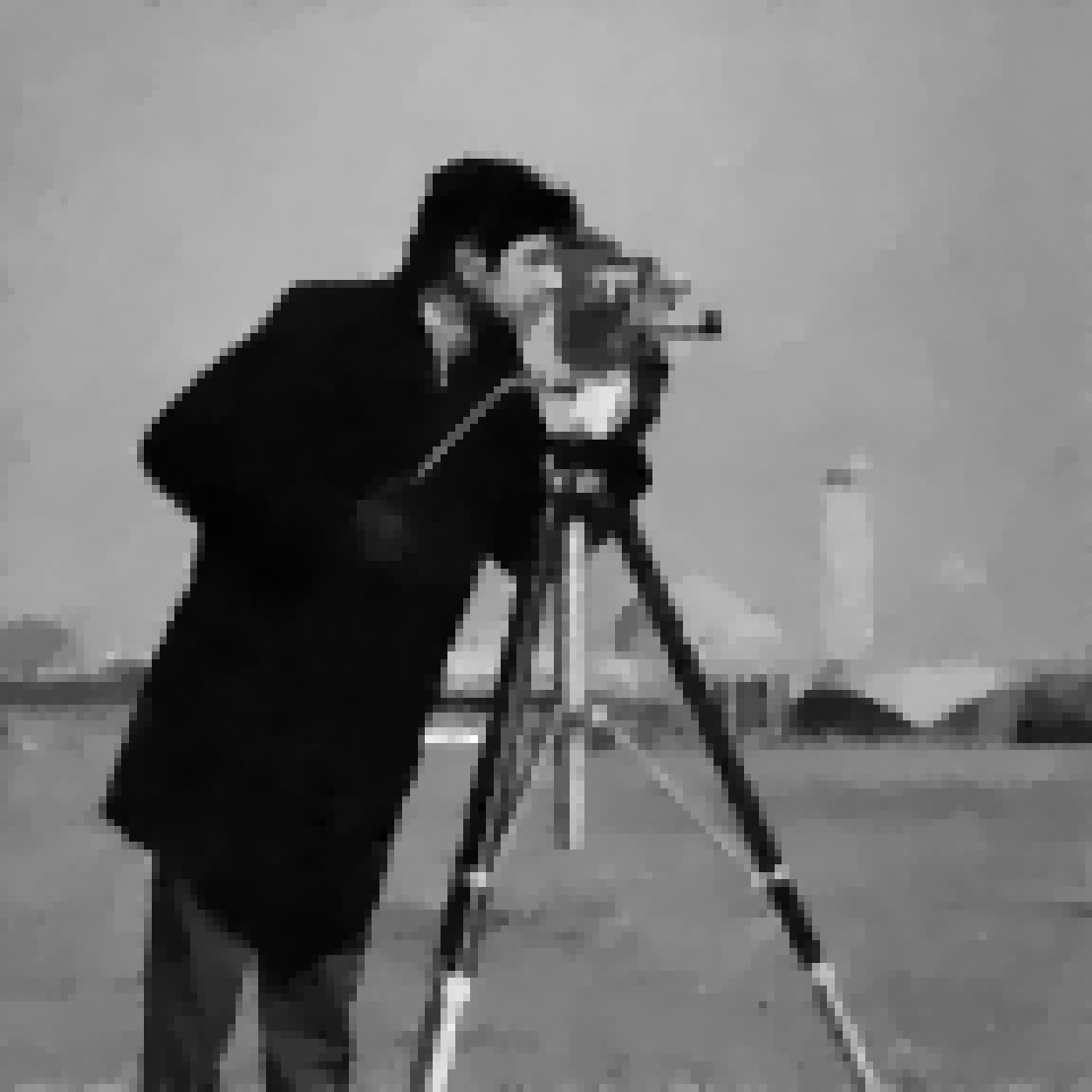}
    \caption{Restored image}
    \end{subfigure}
    \caption{The denoised image for Algorithm~\ref{alg:PD} with $\gamma=0.99, \lambda_{1}=0.01, \lambda_2=0.05, \lambda_3=0.0001$, and $\lambda_{4}=10$.}
    \label{fig:observed from noisy}
\end{figure}
\begin{figure}[h!]
\centering
    \begin{subfigure}[b]{0.19\textwidth}
    \includegraphics[width=0.95\textwidth]{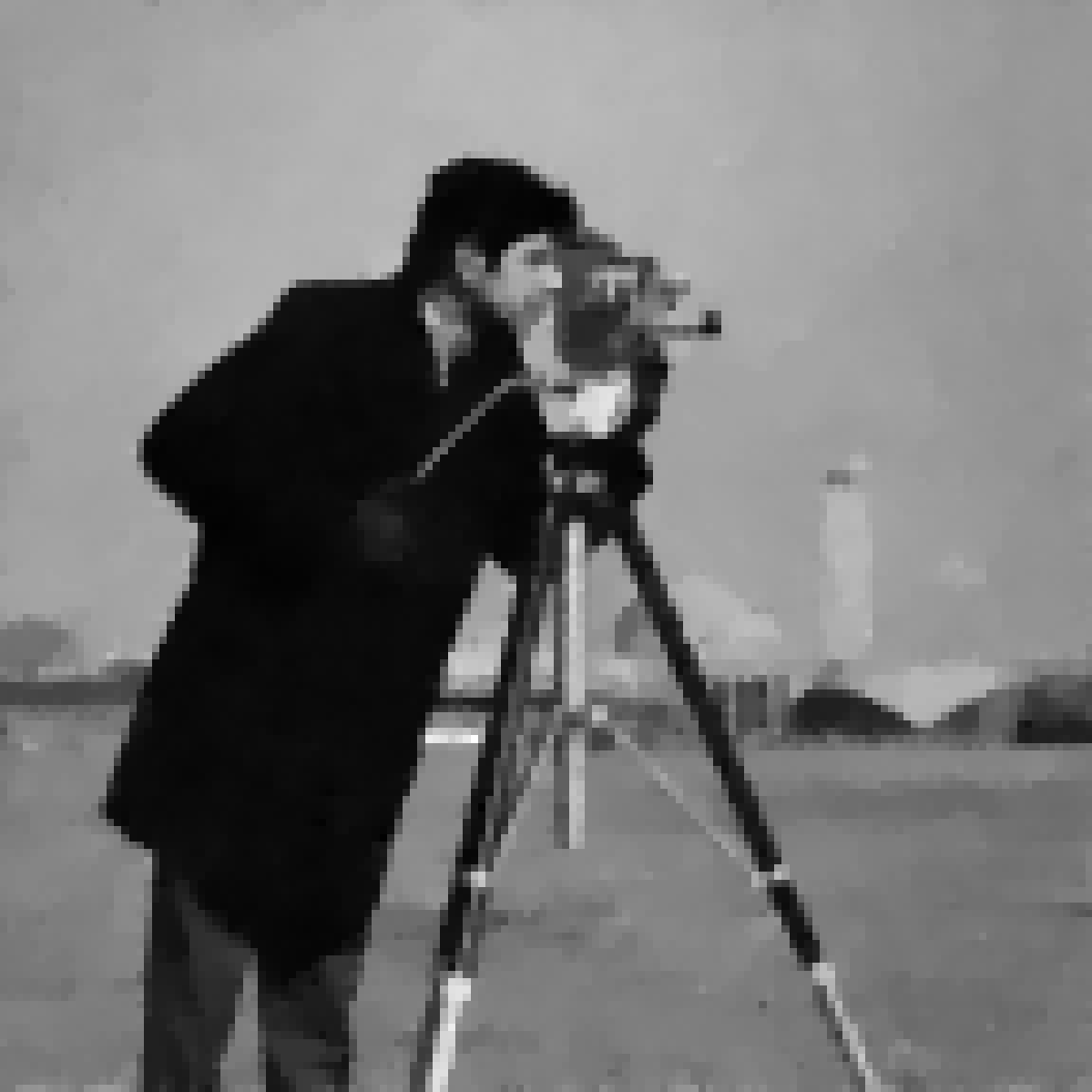}
    \caption{$\lambda_{1}=0.0001$}
    \end{subfigure}
    \begin{subfigure}[b]{0.19\textwidth}
    \includegraphics[width=0.95\textwidth]{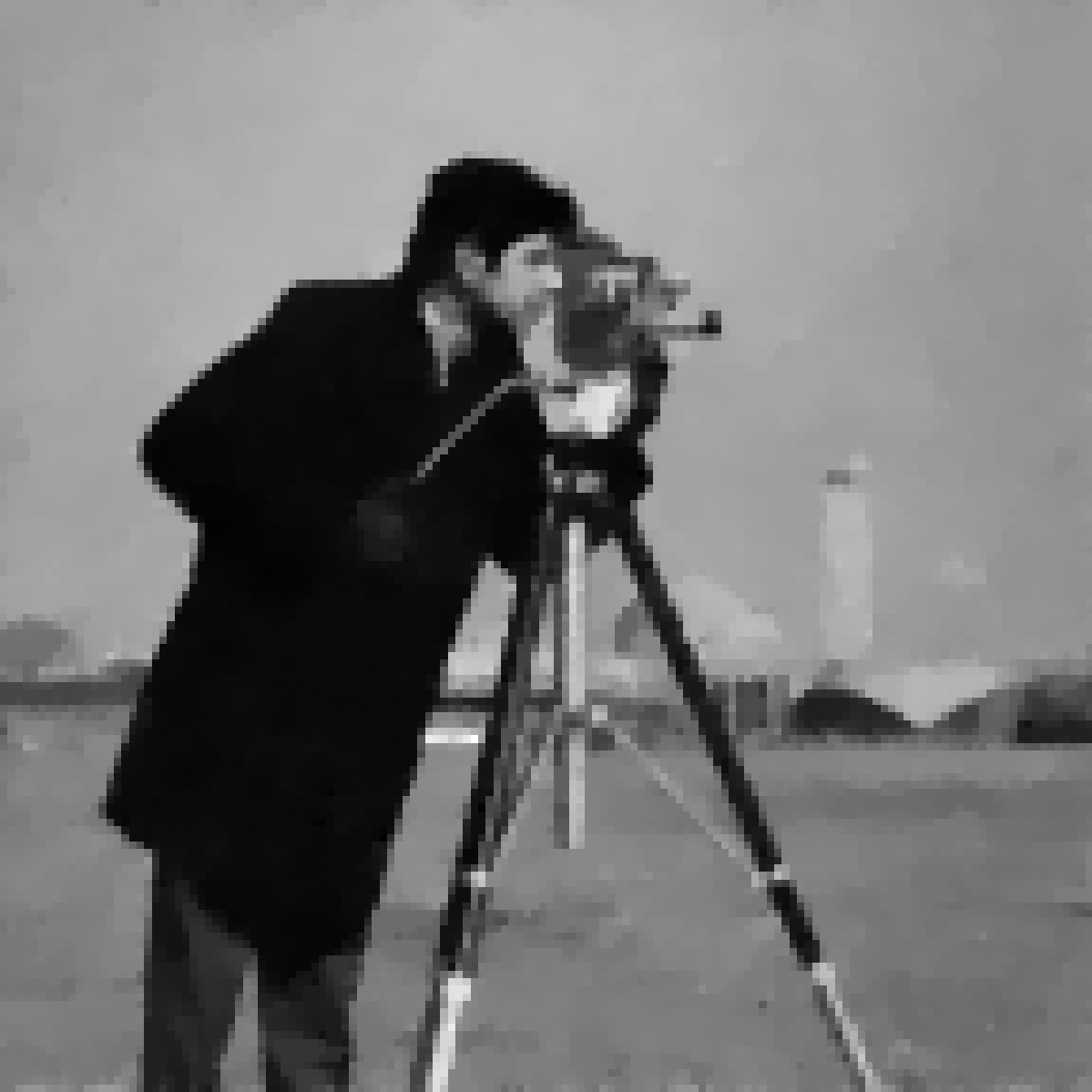}
    \caption{$\lambda_{1}=0.001$}
    \end{subfigure}
    \begin{subfigure}[b]{0.19\textwidth}
    \includegraphics[width=0.95\textwidth]{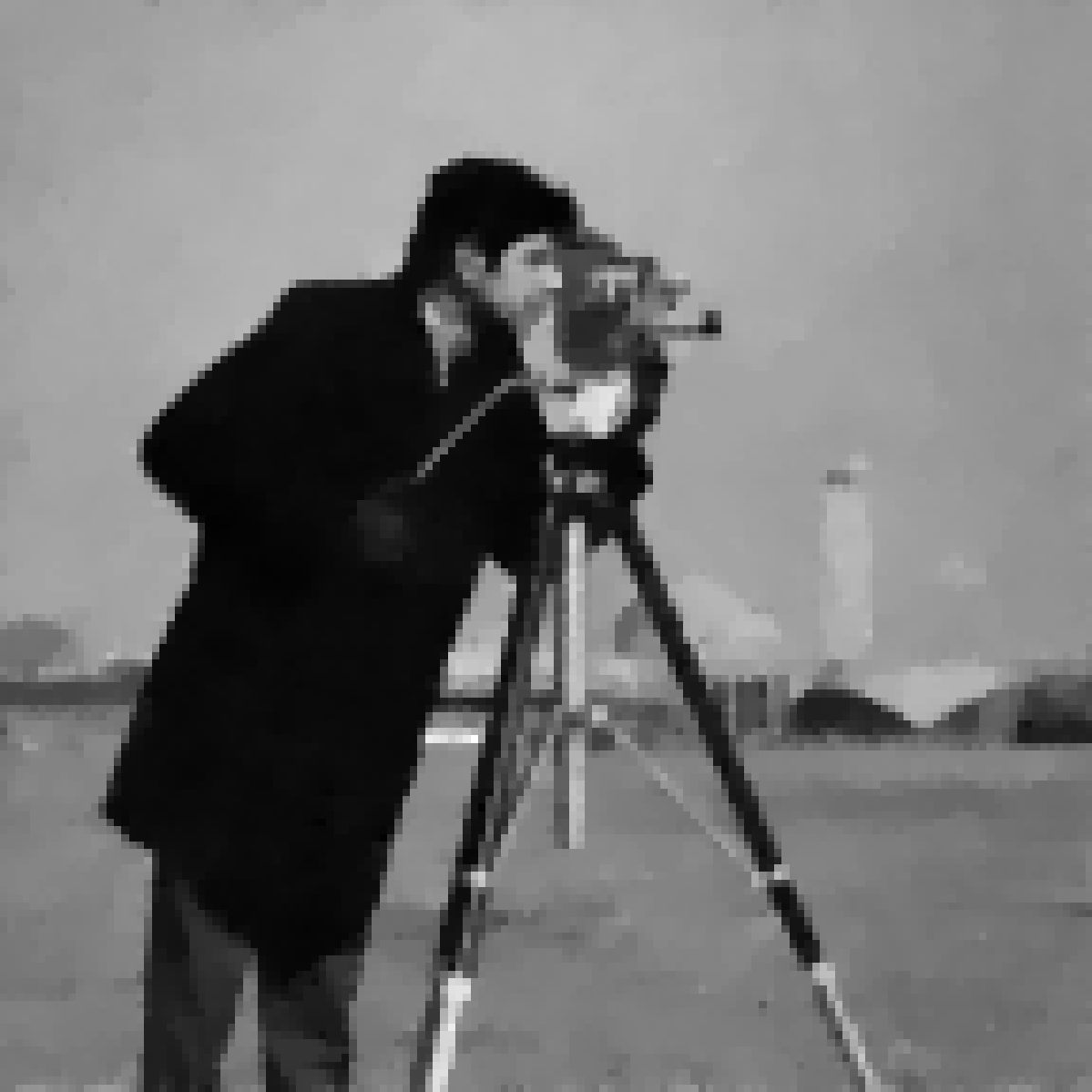}
    \caption{$\lambda_{1}=0.01$}
    \end{subfigure}
    \begin{subfigure}[b]{0.19\textwidth}
\includegraphics[width=0.95\textwidth]{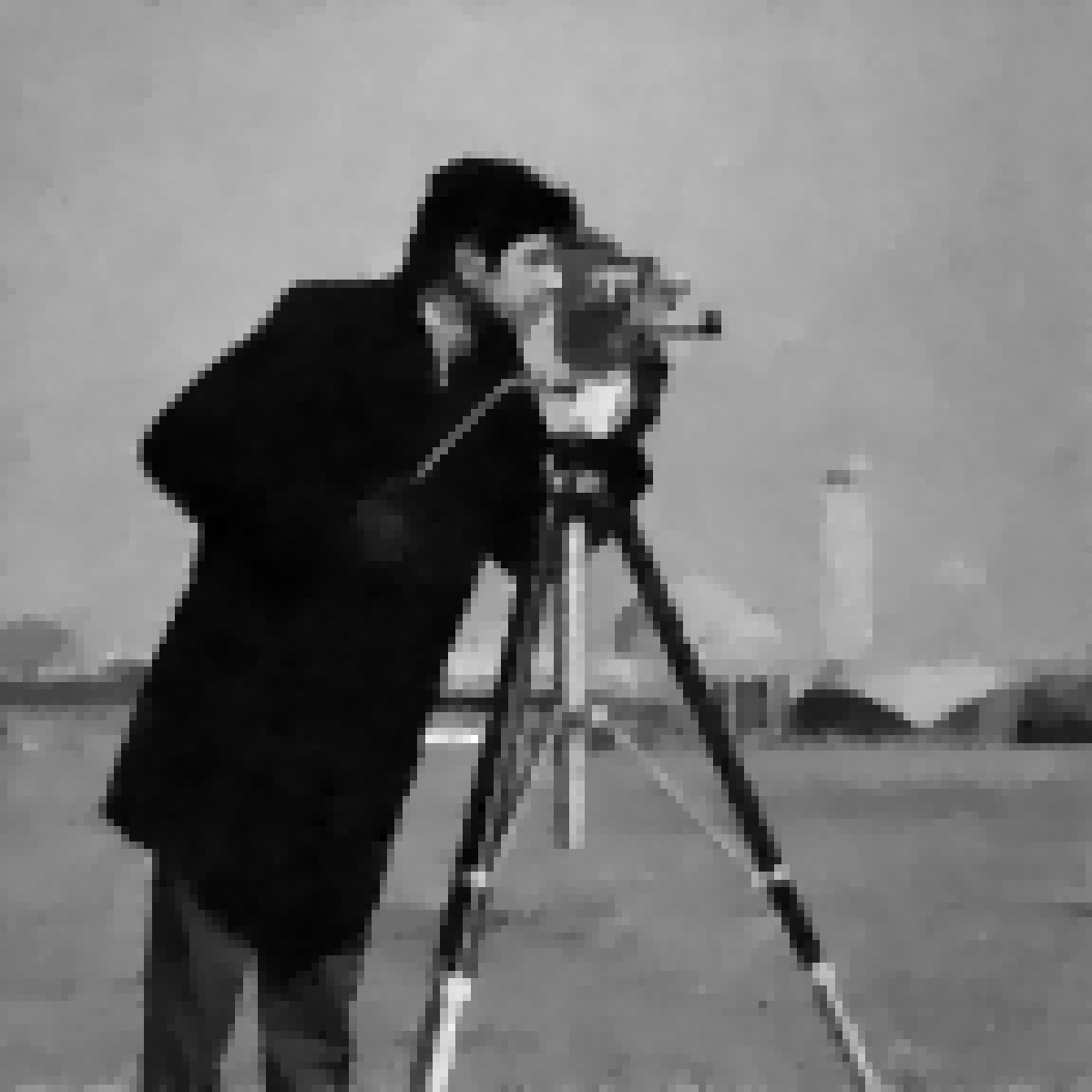}
\caption{$\lambda_{1}=1$}
    \end{subfigure}
    \begin{subfigure}[b]{0.19\textwidth}
\includegraphics[width=0.95\textwidth]{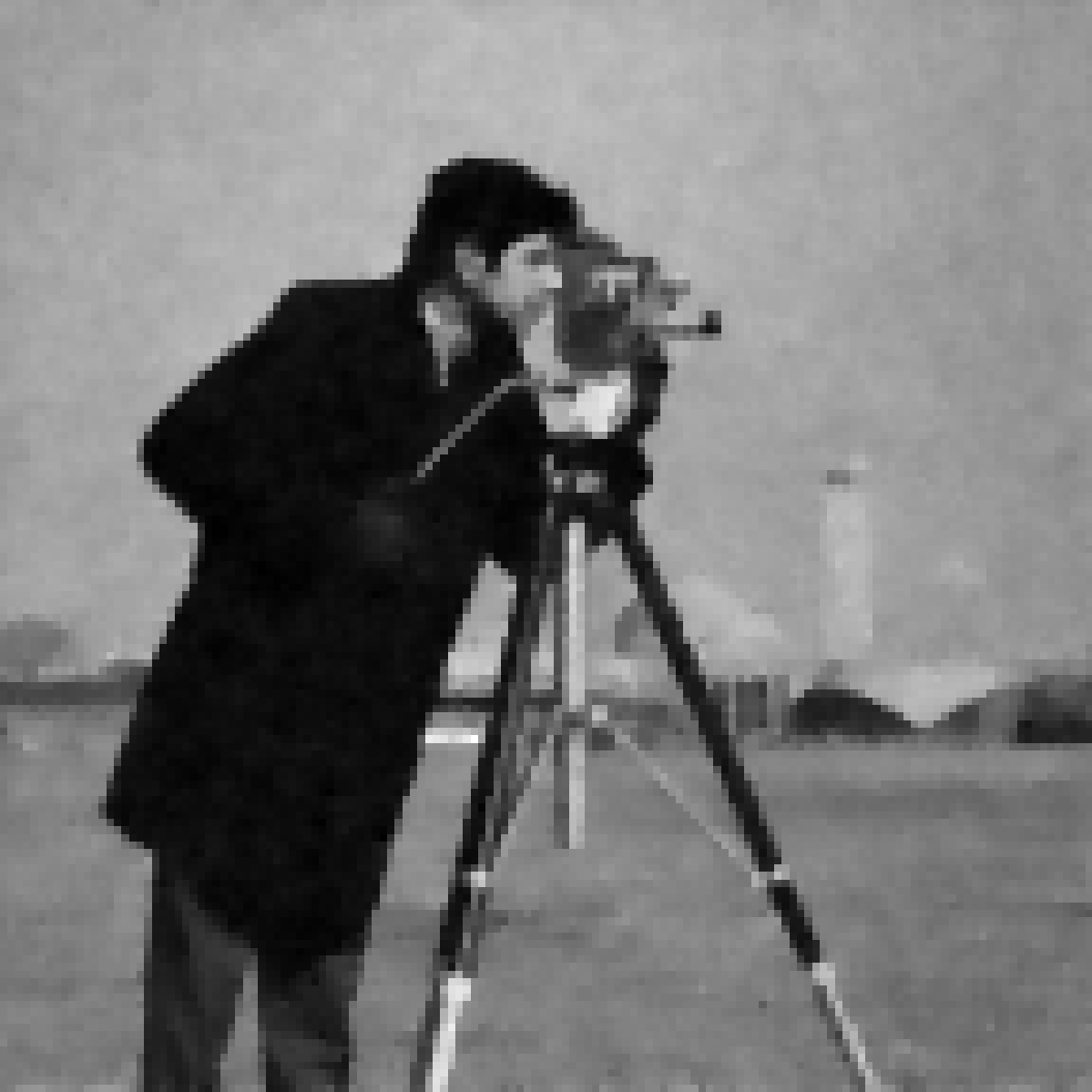}
\caption{$\lambda_1=5$}
\end{subfigure}
    \begin{subfigure}[b]{0.19\textwidth}
    \includegraphics[width=0.95\textwidth]{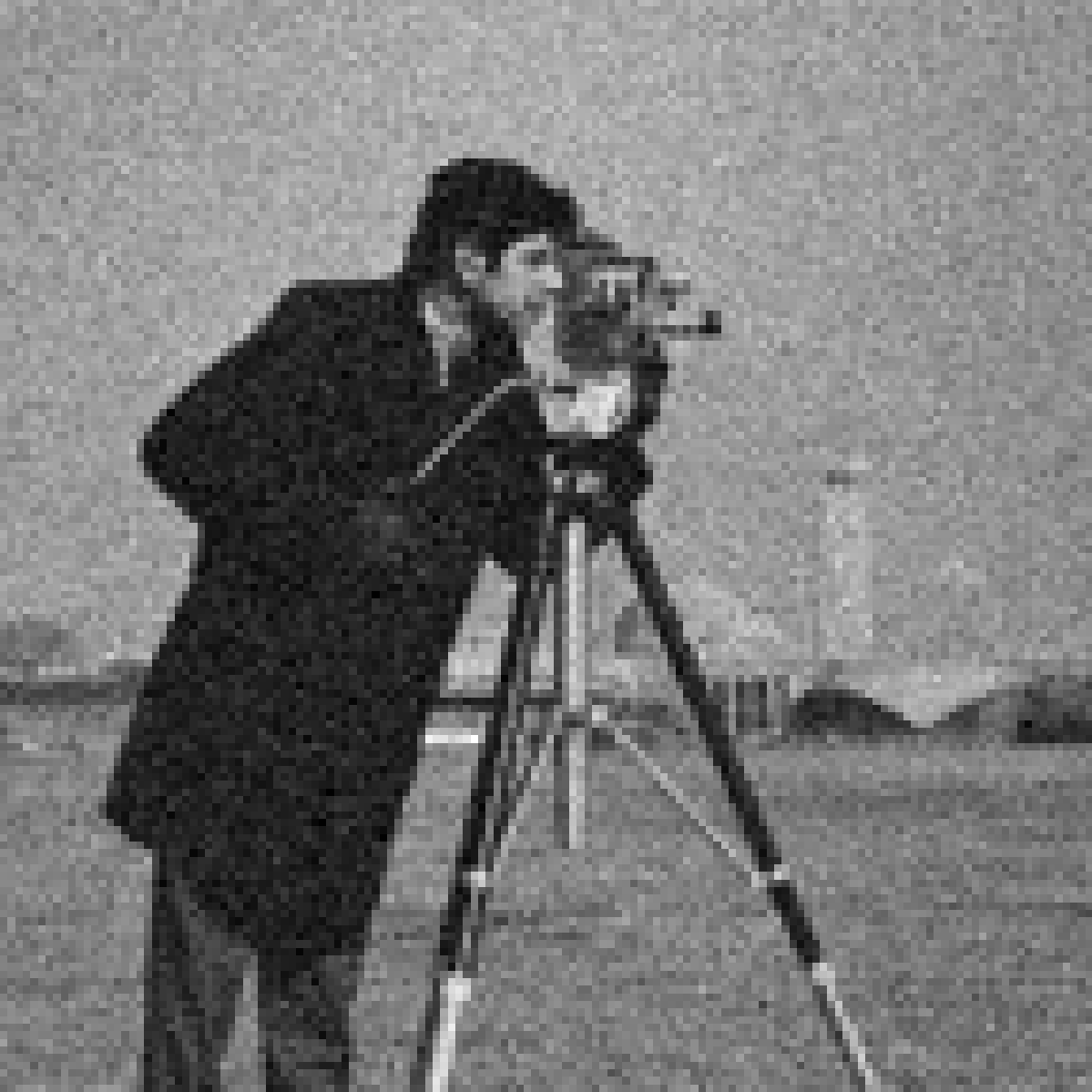}
    \caption{$\lambda_{2}=0.001$}
    \end{subfigure}
    \begin{subfigure}[b]{0.19\textwidth}
    \includegraphics[width=0.95\textwidth]{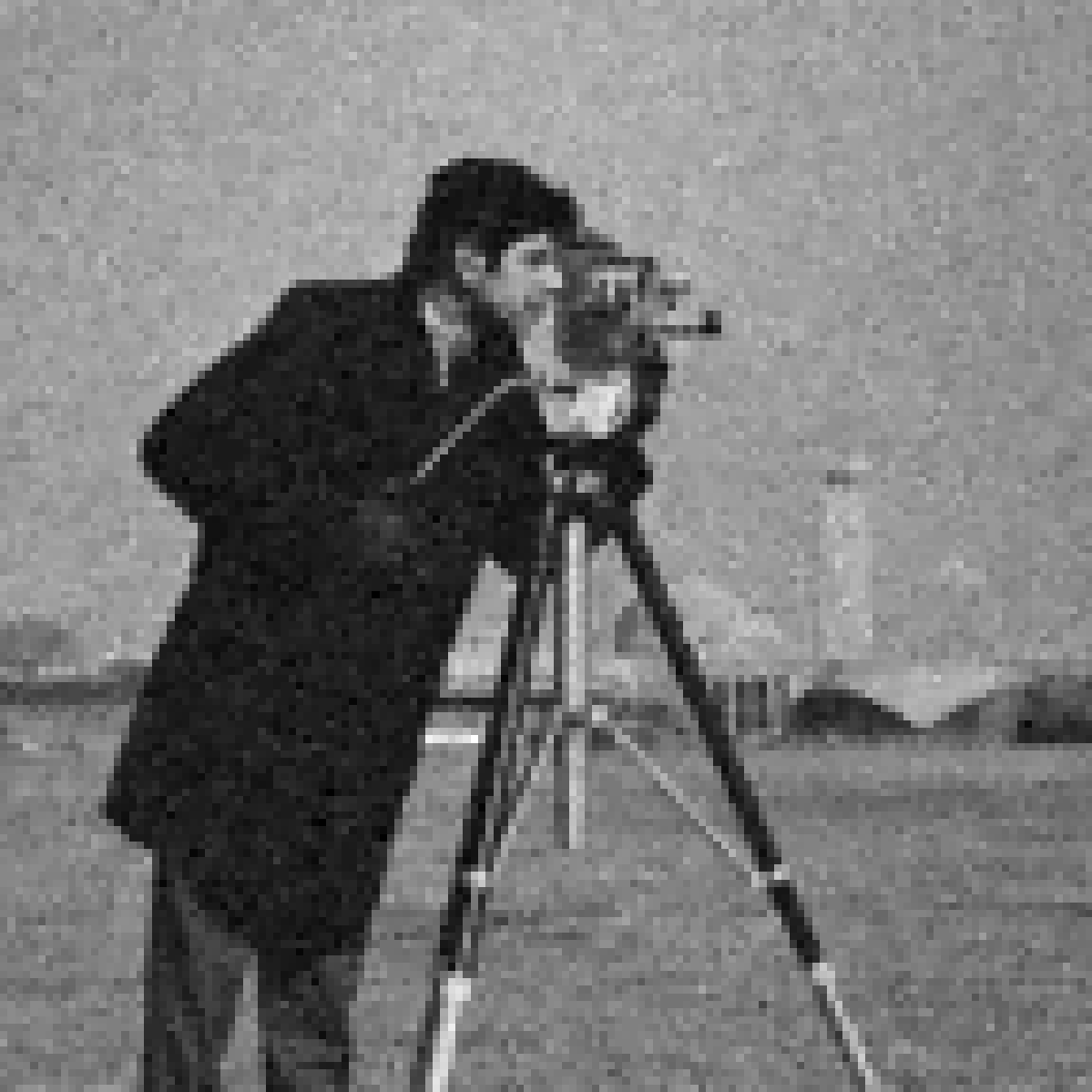}
     \caption{$ \lambda_{2}=0.01$}
    \end{subfigure}
    \begin{subfigure}[b]{0.19\textwidth}
    \includegraphics[width=0.95\textwidth]{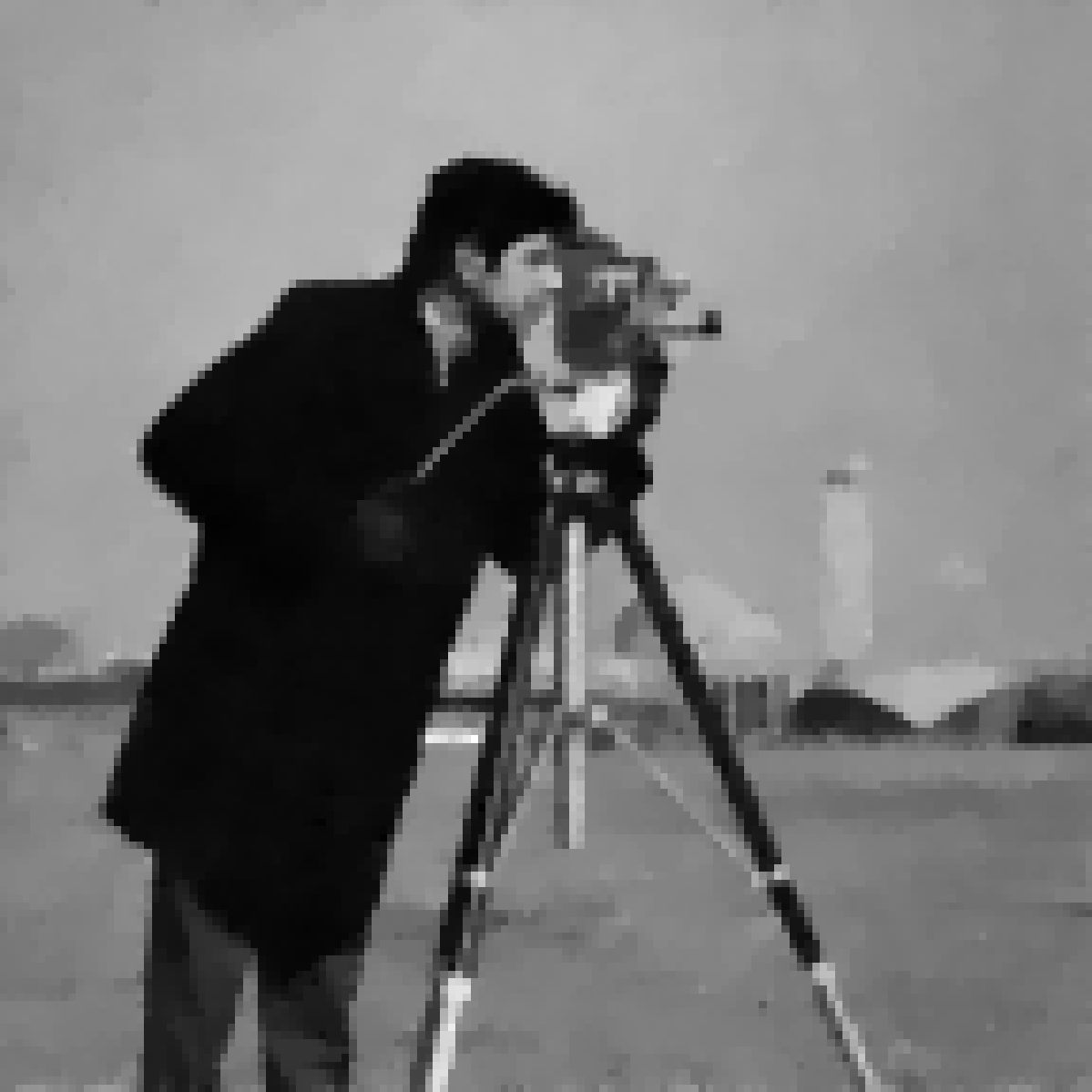}
     \caption{$ \lambda_{2}=0.05$}
    \end{subfigure}
    \begin{subfigure}[b]{0.19\textwidth}
    \includegraphics[width=0.95\textwidth]{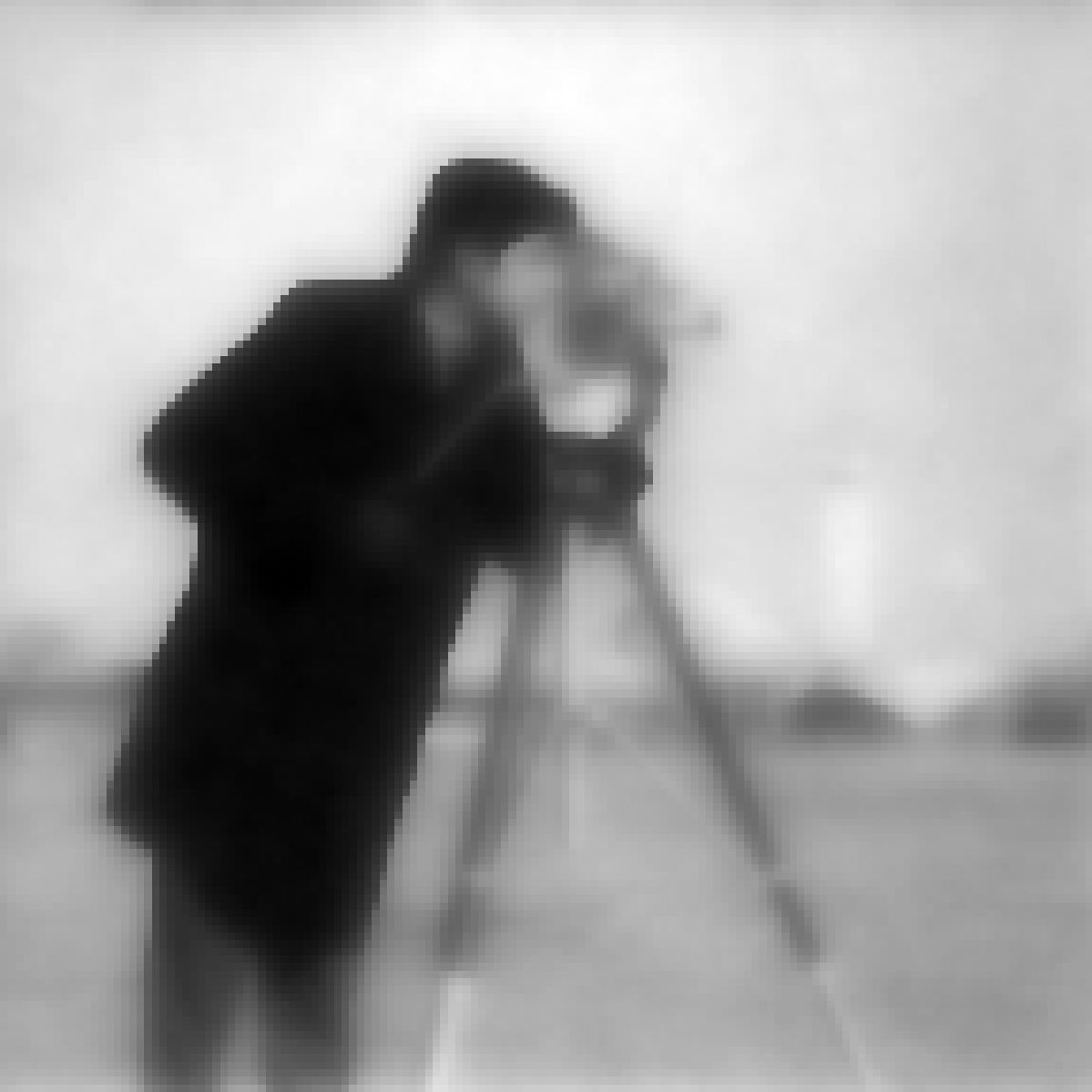}
    \caption{$ \lambda_{2}=0.5$}
    \end{subfigure}
    \begin{subfigure}[b]{0.19\textwidth}
    \includegraphics[width=0.95\textwidth]{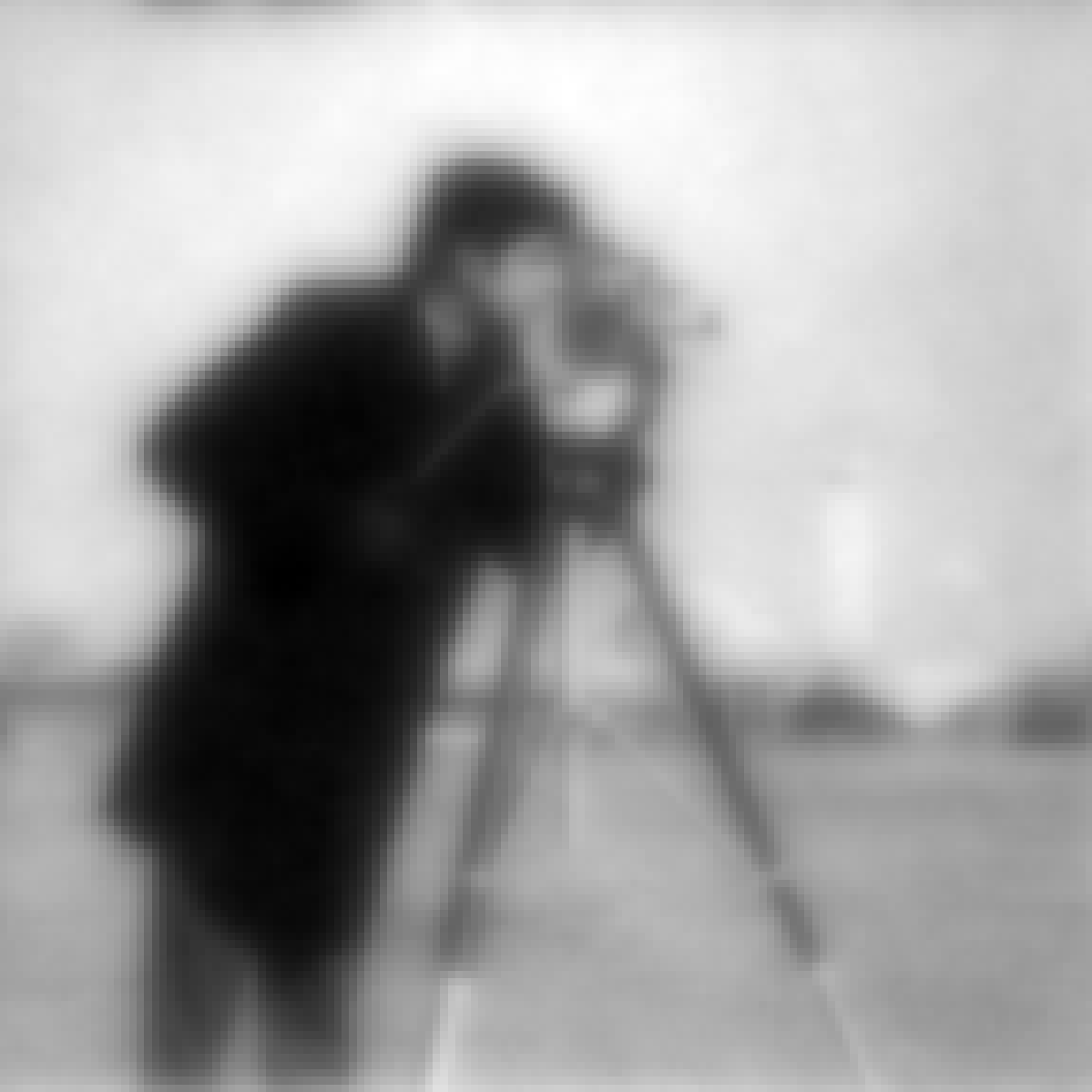}
    \caption{$\lambda_{2}=1$}
    \end{subfigure}
    \begin{subfigure}[b]{0.19\textwidth}
    \centering
    \includegraphics[width=0.95\textwidth]{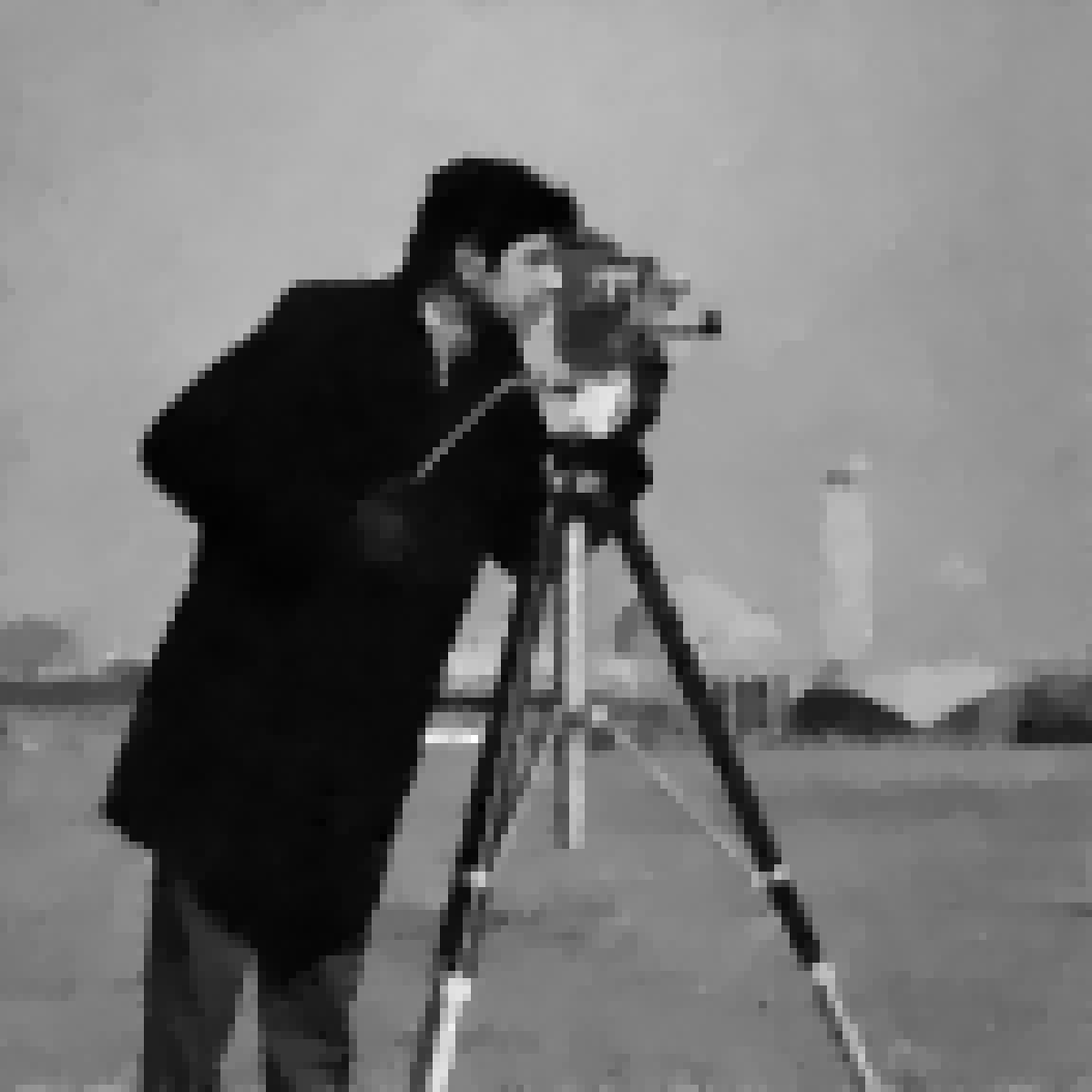}
    \caption{$\lambda_3=0.000001$}
    \end{subfigure}
    \begin{subfigure}[b]{0.19\textwidth}
    \includegraphics[width=.95\textwidth]{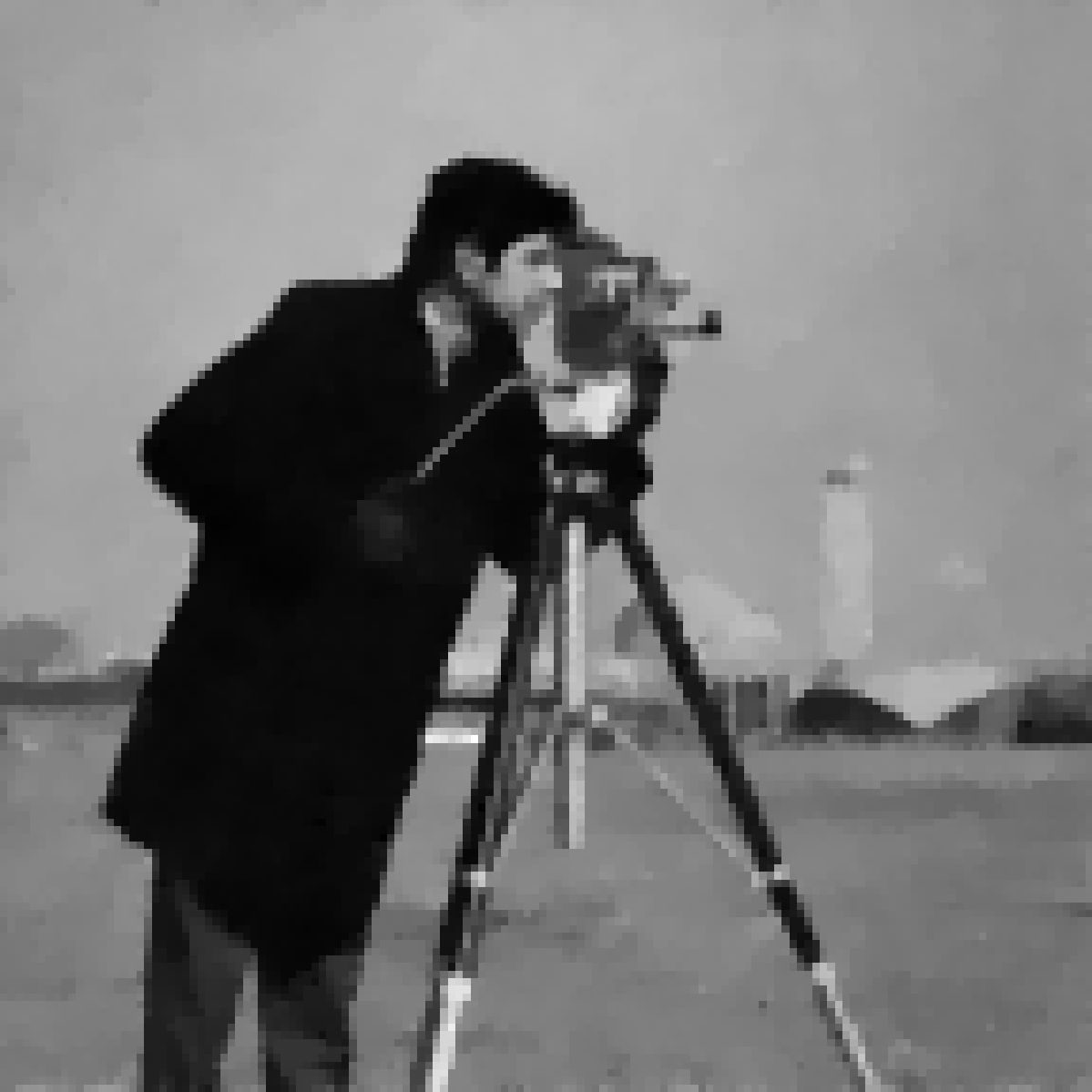}
    \caption{$\lambda_3=0.00001$}
    \end{subfigure}
    \begin{subfigure}[b]{0.19\textwidth}
    \includegraphics[width=0.95\textwidth]{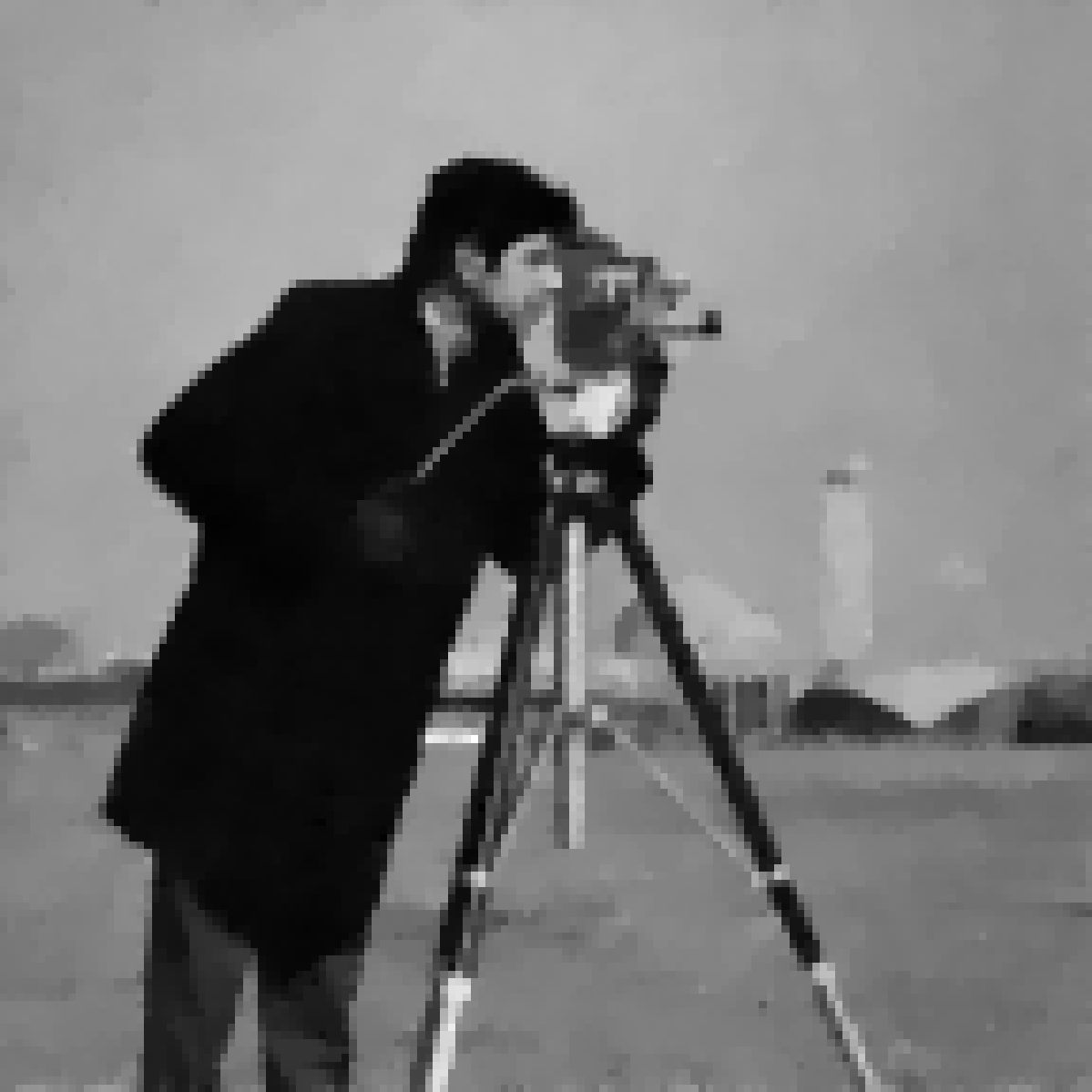}
    \caption{$\lambda_3=0.0001$}
    \end{subfigure}
    \begin{subfigure}[b]{0.19\textwidth}
    \includegraphics[width=0.95\textwidth]{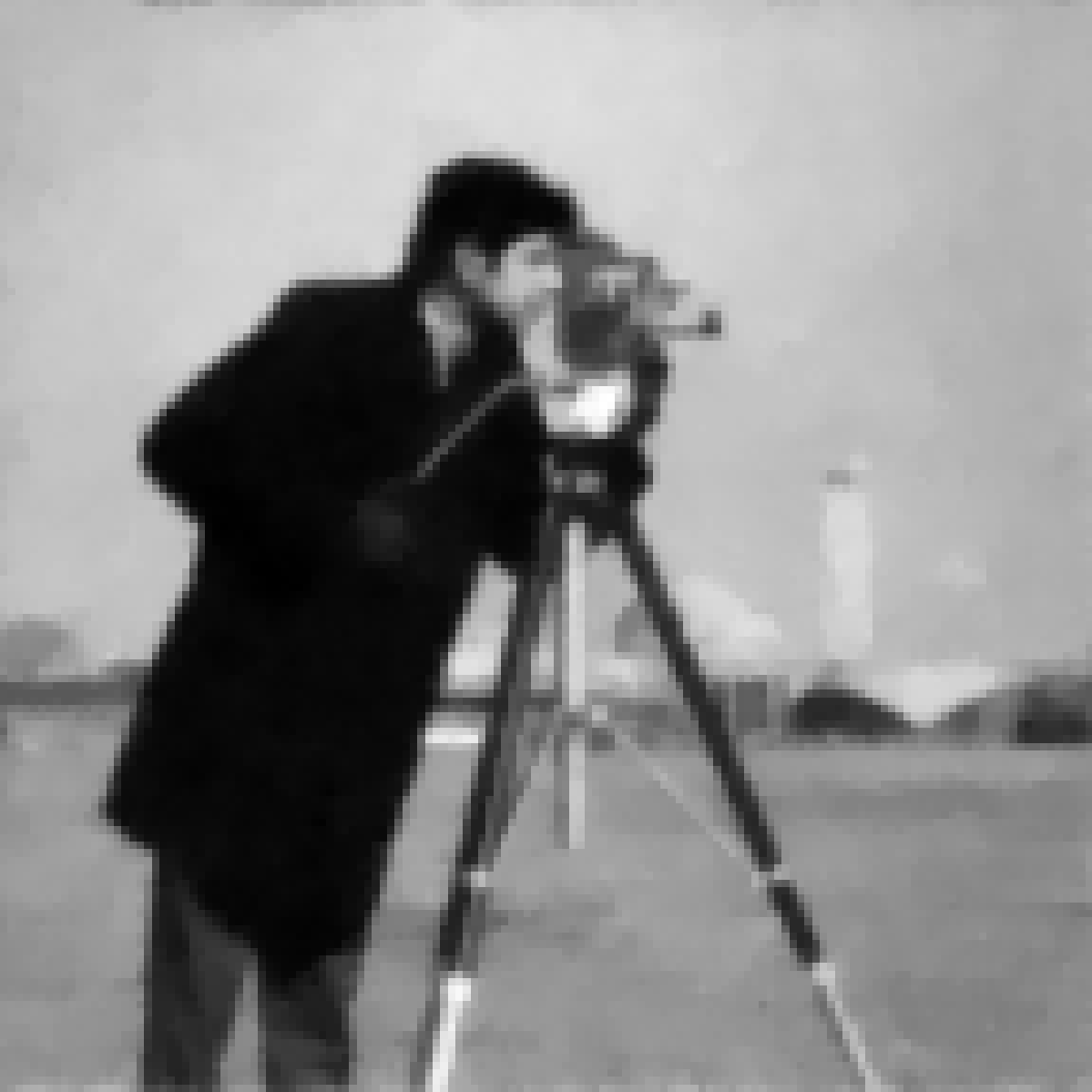}
    \caption{$\lambda_3=0.5$}
    \end{subfigure}
    \begin{subfigure}[b]{0.19\textwidth}
    \includegraphics[width=0.95\textwidth]{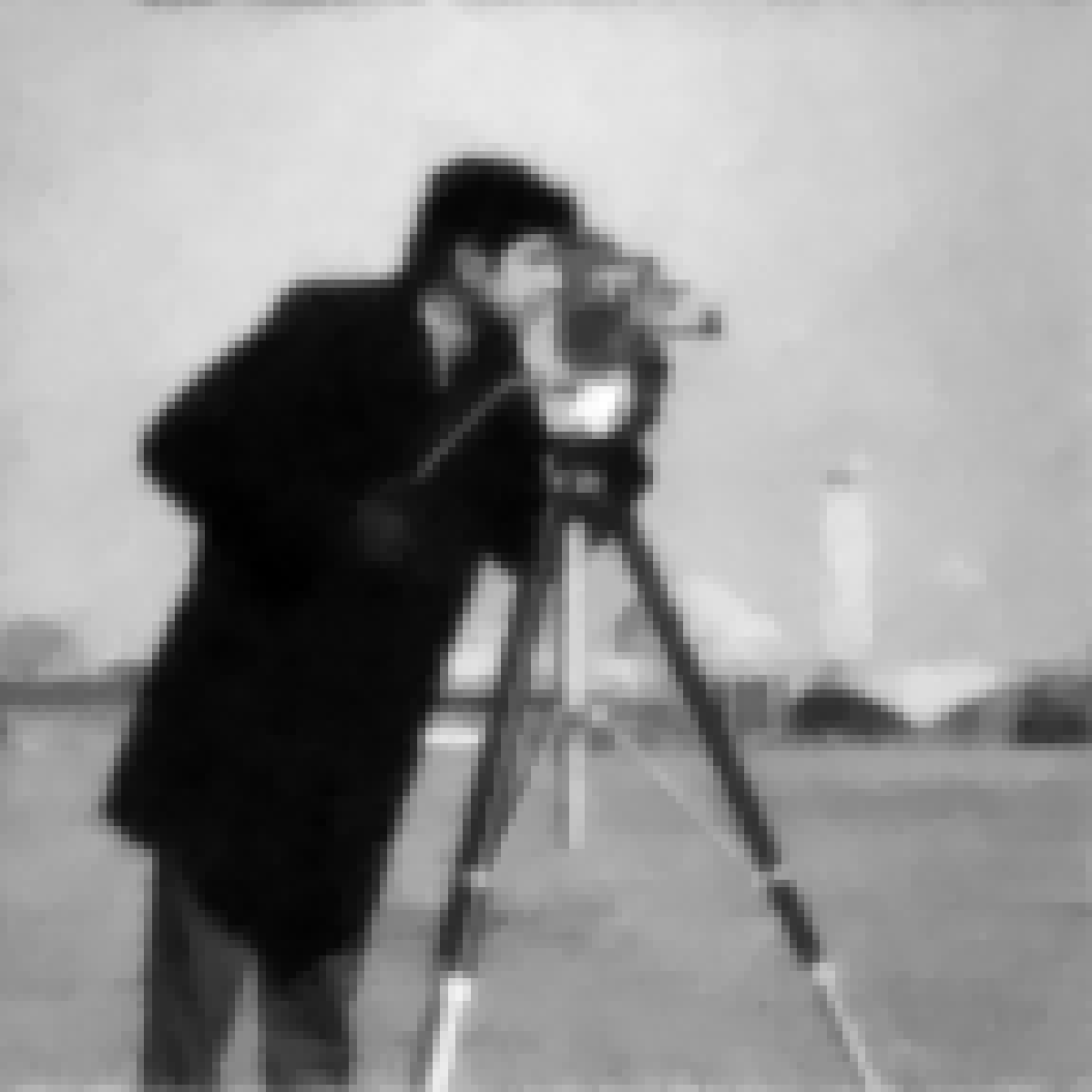}
    \caption{$\lambda_3=1$}
    \end{subfigure}
    \begin{subfigure}[b]{0.19\textwidth}
    \includegraphics[width=0.95\textwidth]{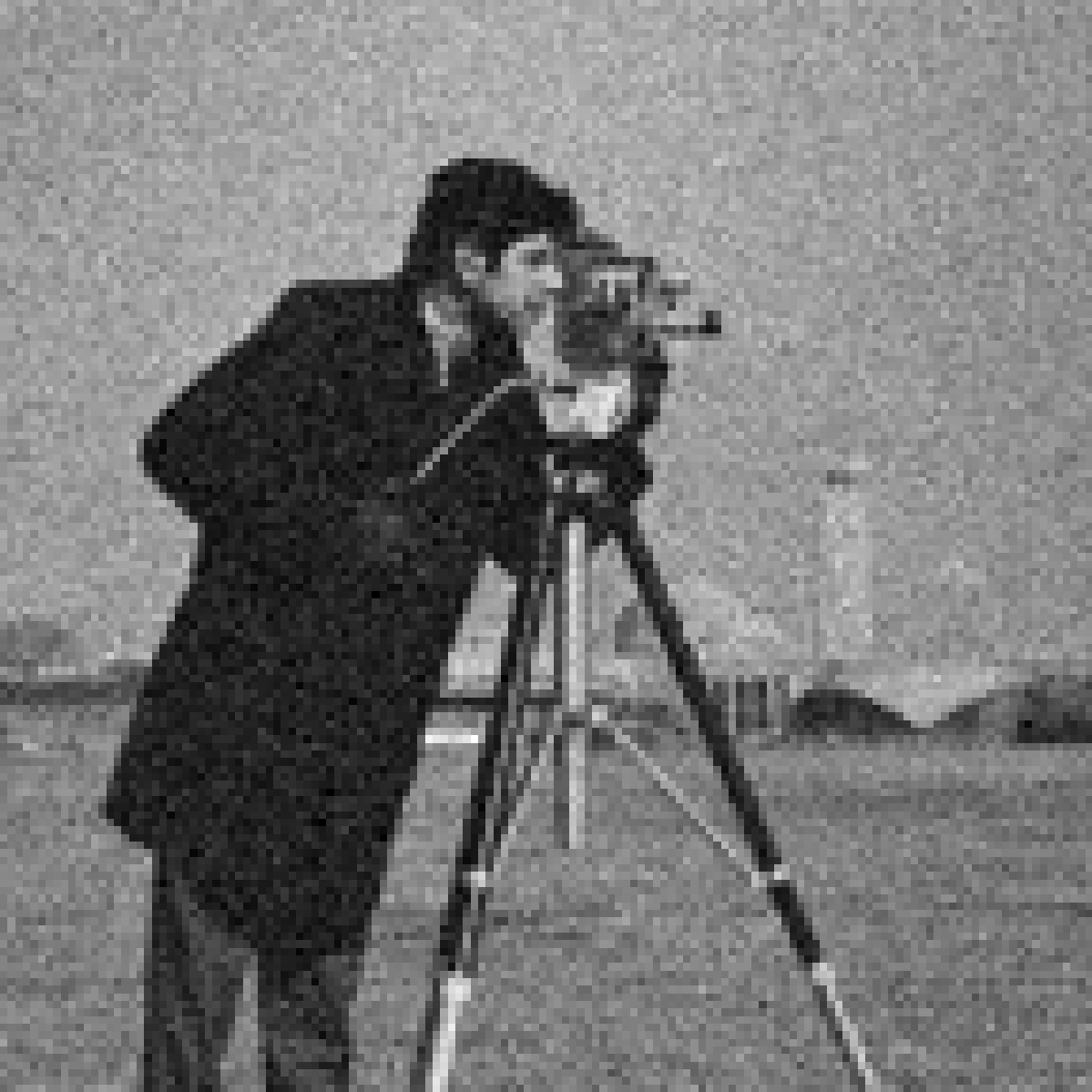}
    \caption{$\lambda_4=0.001$}
    \end{subfigure}
    \begin{subfigure}[b]{0.19\textwidth}
    \includegraphics[width=0.95\textwidth]{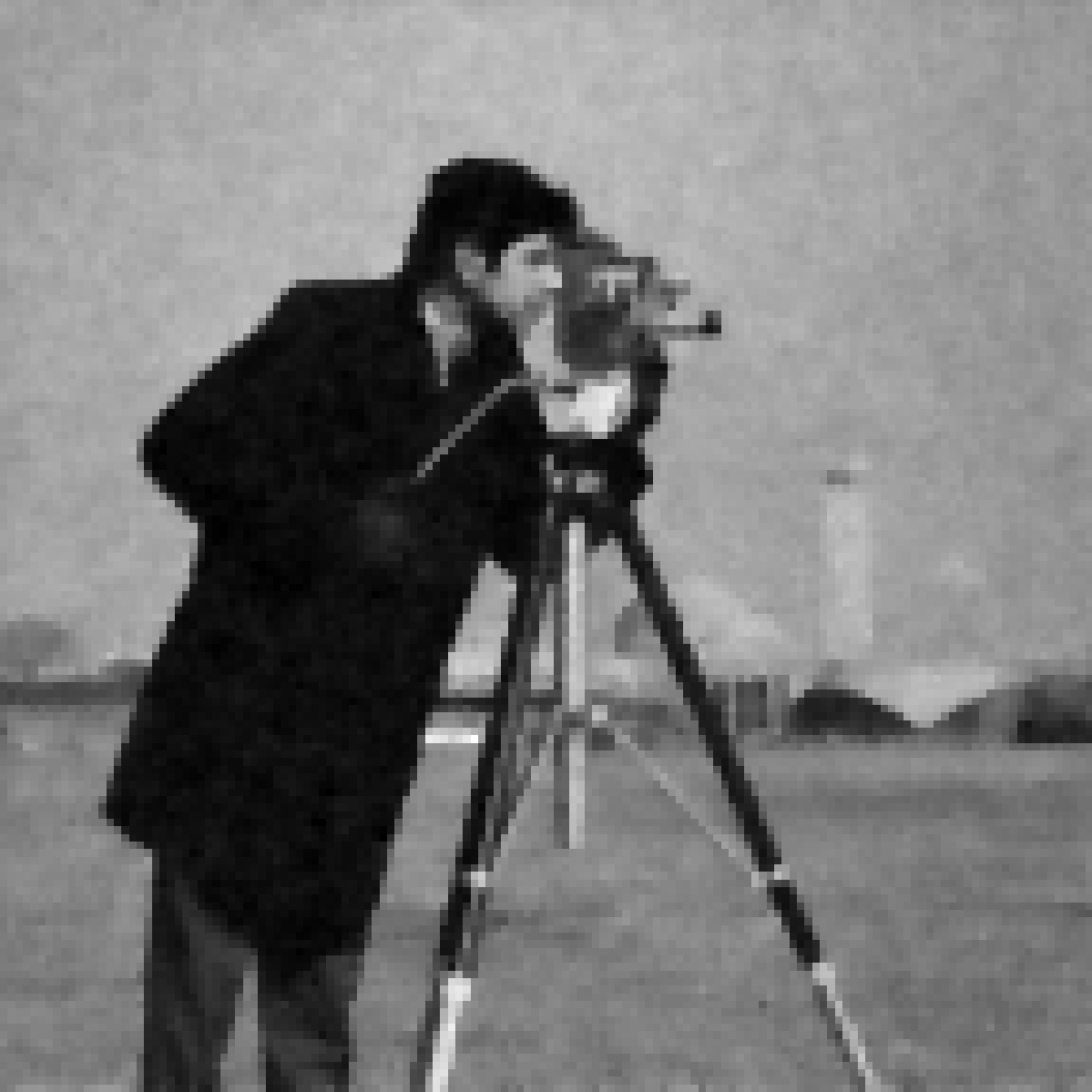}
    \caption{$\lambda_4=1$}
    \end{subfigure}
    \begin{subfigure}[b]{0.19\textwidth}
    \includegraphics[width=0.95\textwidth]{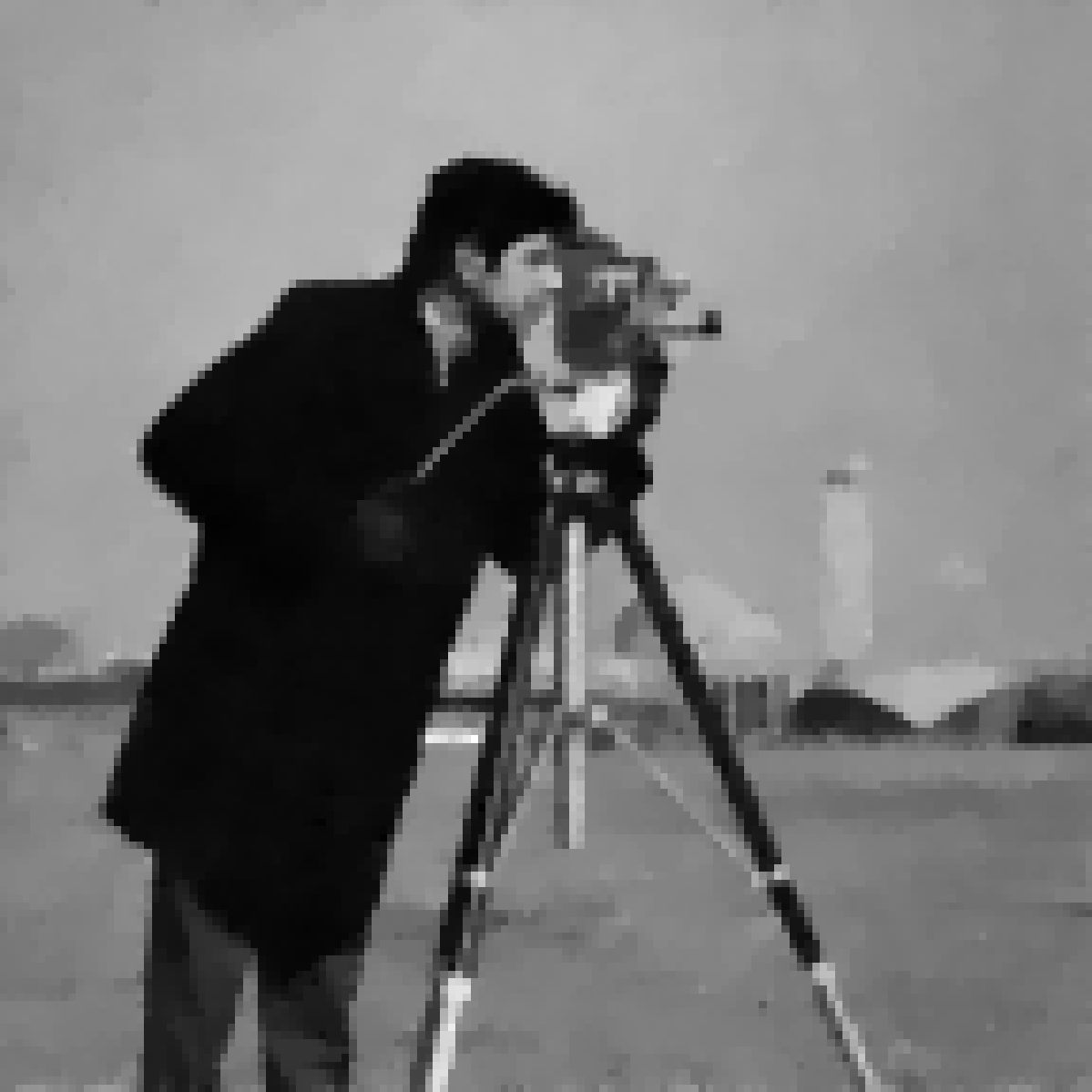}
    \caption{$\lambda_4=10$}
    \end{subfigure}
    \begin{subfigure}[b]{0.19\textwidth}
    \includegraphics[width=0.95\textwidth]{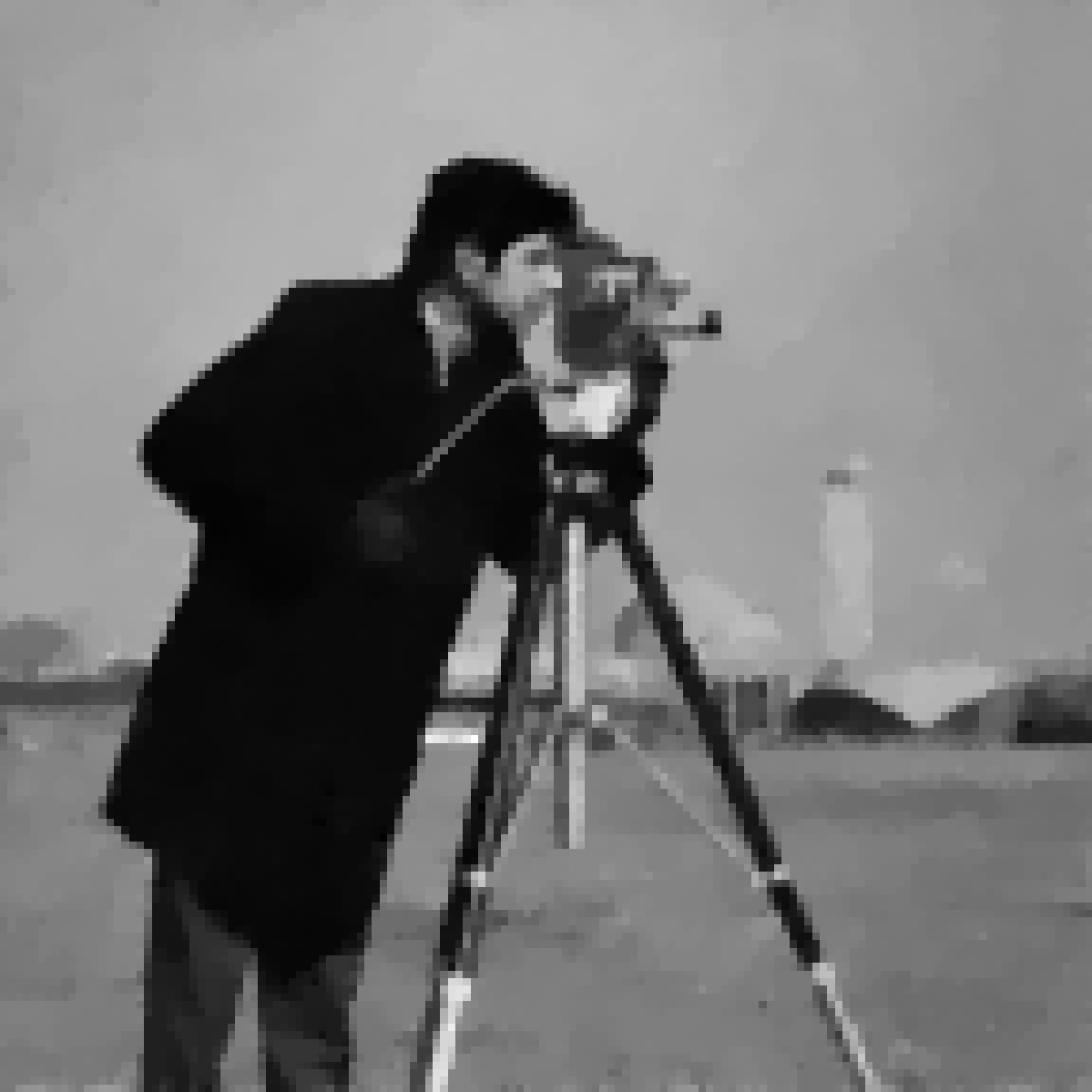}
    \caption{$\lambda_4=20$}
    \end{subfigure}
    \begin{subfigure}[b]{0.19\textwidth}
    \includegraphics[width=0.95\textwidth]{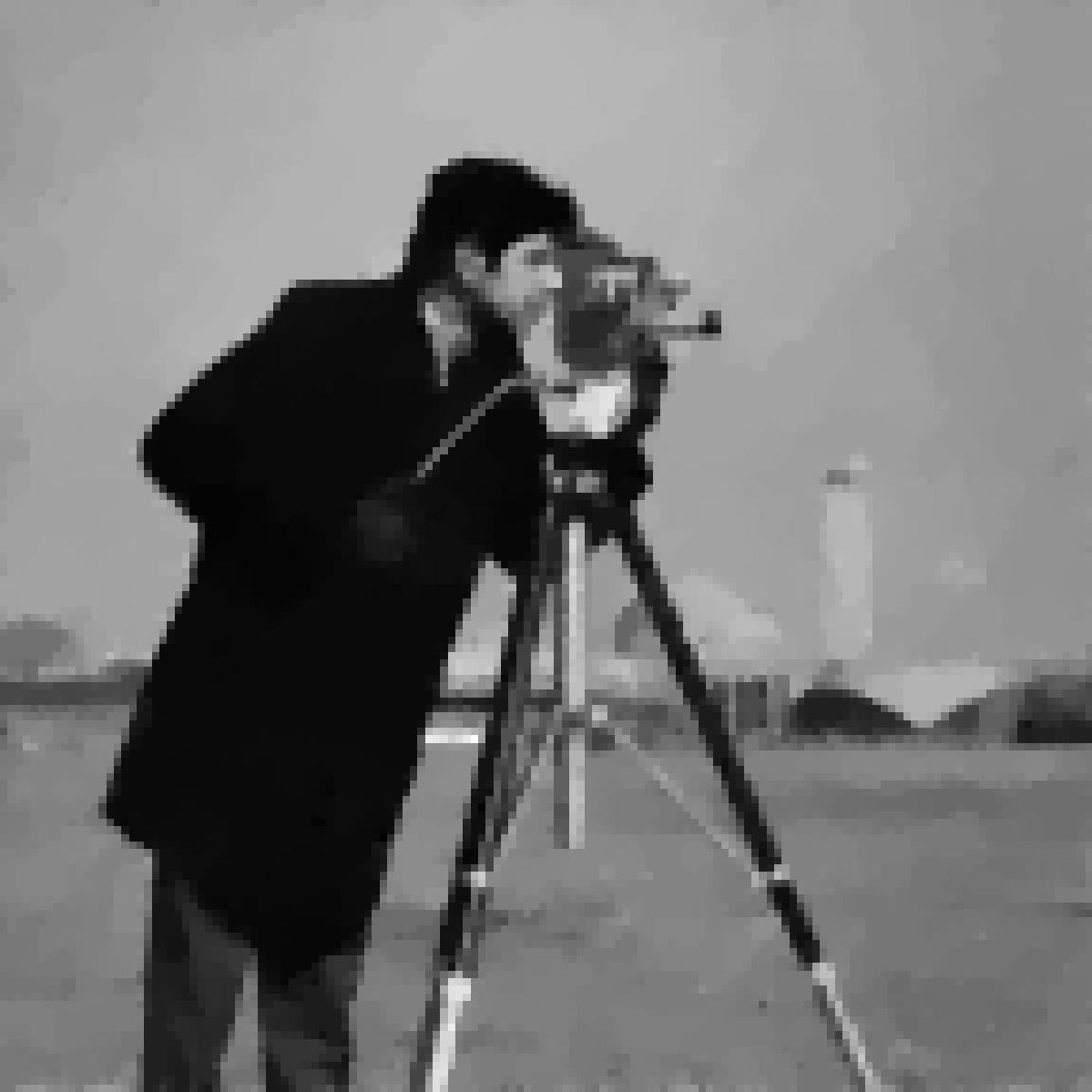}
    \caption{$\lambda_4=100$}
    \end{subfigure}
    \caption{The effect of varying $\lambda_1, \lambda_2, \lambda_3$ and $\lambda_4$ on the solution of \eqref{delburring problem}.}
    \label{fig:effect of lambda4}
\end{figure}
To examine the effect of varying regularization parameters, we applied Algorithm~\ref{alg:PD} to the problem \eqref{delburring problem} with the ``Cameraman" image~\cite{schreiber1978image}. The original image was degraded by a Gaussian noise with zero mean and standard deviation 0.05. Figure~\ref{fig:observed from noisy} shows the original, noisy, and the recovered image. Using a trial-and-error process, summarized in Figure~\ref{fig:effect of lambda4}, $(\lambda_{1},\lambda_2,\lambda_3,\lambda_4)=(0.01,0.05,0.0001,10)$ were found to give to be a good reconstruction, which is used in all experiments. For the initialization, we set $\mathbf{p}^0=(\mathbf{0},\mathbf{0})\in\mathbb{R}^{m}\times \mathbb{R}^m$ and $\mathbf{q}^0=(\mathbf{0},\mathbf{0})\in\mathbb{R}^{2m}\times\mathbb{R}^{2m}$. Table~\ref{tab:different_values_of_gamma} shows the normalized final change iterates $\frac{\|(\mathbf{p}^k,\mathbf{q}^k)-(\mathbf{p}^{k-1},\mathbf{q}^{k-1})\|}{m}$ and the signal to noise ratio after 100 iterations. Here the normalization by $m$ ensures the algorithm performance is independent across different image sizes, making the convergence rate independent of image resolution, and signal to noise ratio~\cite{price1993signals} is defined as $$\text{SNR}=10\cdot\log_{10}\left(\frac{\|\text{Original}-\text{Restored}\|}{\|\text{Original}\|}\right).$$ The Table~\ref{tab:different_values_of_gamma} suggests that $\gamma=0.99$ as a good choice for Algorithm~\ref{alg:PD}.

\begin{figure}[h!]
\centering
    \begin{subfigure}[b]{0.45\textwidth}
    \includegraphics[height=5cm]{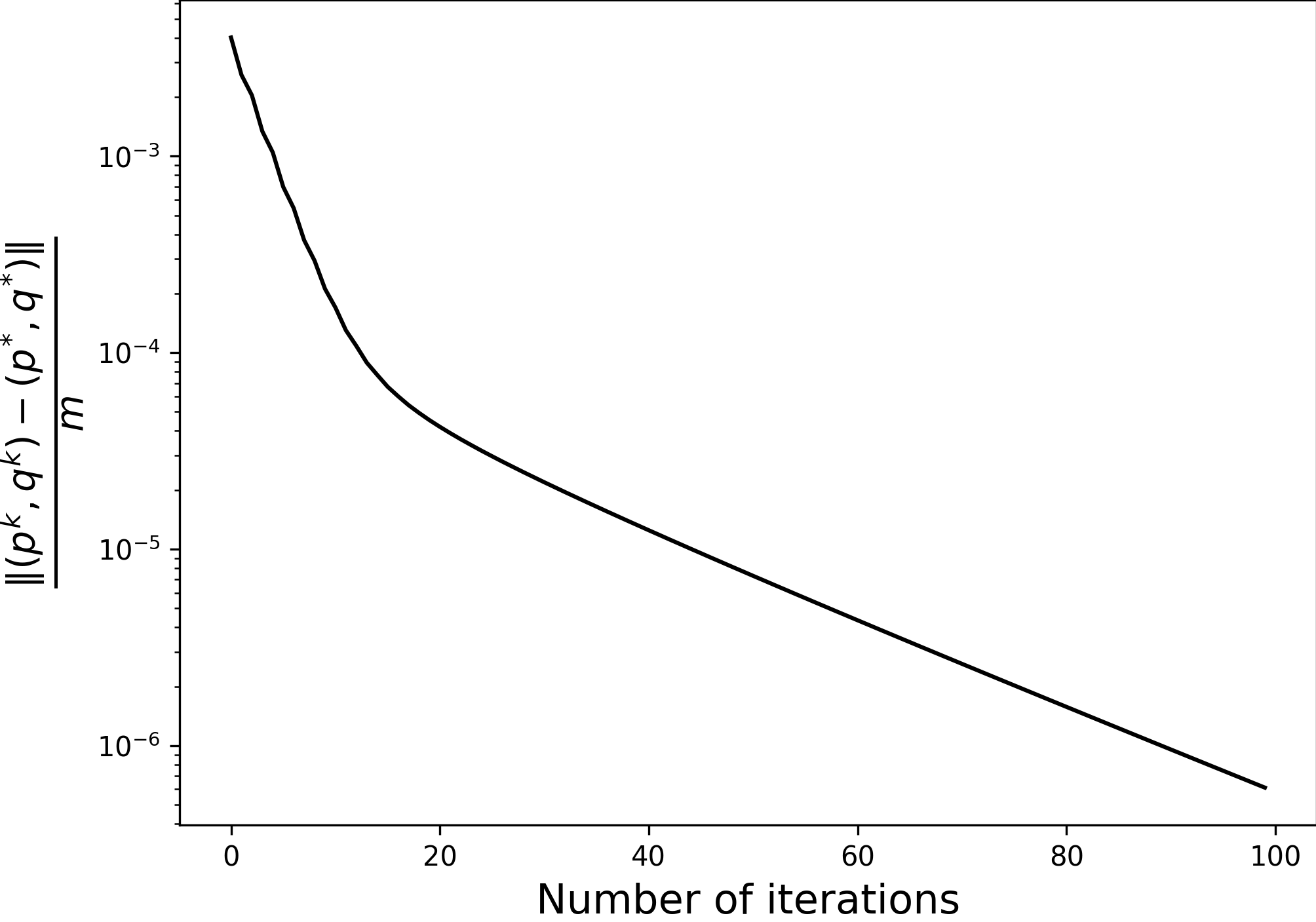}
    \caption{}
    \end{subfigure}
    \begin{subfigure}[b]{0.45\textwidth}
    \includegraphics[height=5cm]{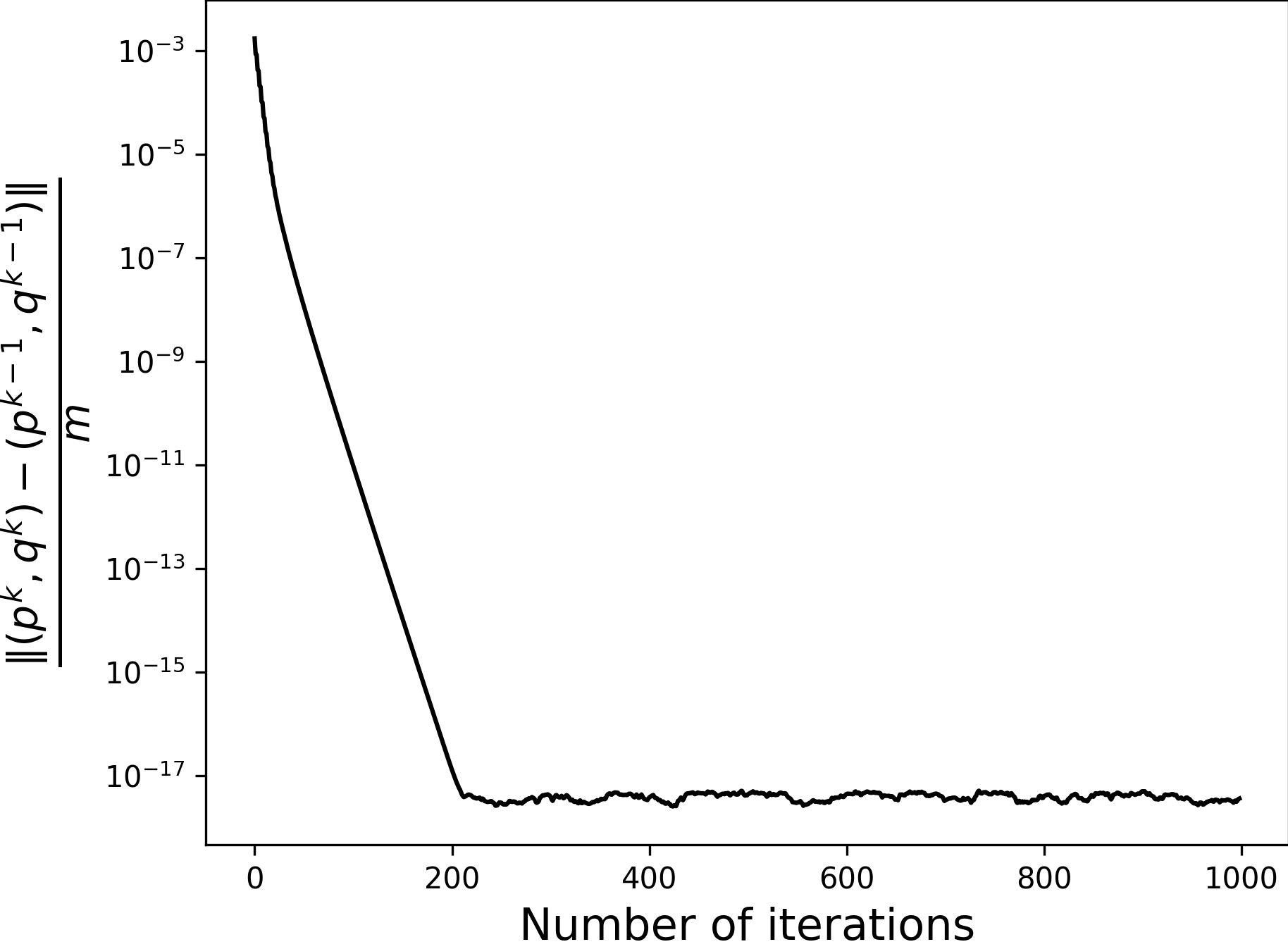}
    \caption{}
    \end{subfigure}
    \caption{(a) Normalized distance to the solution vs iterations, and (b) normalized final change iterates with $\lambda_{1}=0.01,\lambda_{2}=0.05,\lambda_3=0.0001$ and $\lambda_4=10$ for Algorithm~\ref{alg:PD}.}
    \label{fig:error for different lambda}
\end{figure}

Figure~\ref{fig:error for different lambda}\hyperref[fig:error for different lambda]{(a)} illustrates that Algorithm~\ref{alg:PD} converges linearly which supports the results presented in Corollary~\ref{corollary primal dual linear convergence}. In this figure, we analyzed the normalized distance to the solution $\frac{\|(\mathbf{p}^k,\mathbf{q}^k)-(\mathbf{p}^{*},\mathbf{q}^*)\|}{m}$ for the first 100 iterations, where $(\mathbf{p}^k,\mathbf{q}^k)$ is the sequence generated by Algorithm~\ref{alg:PD} at each iteration $k$  and  $(\mathbf{p}^*,\mathbf{q}^*)$ is an approximate optimal values obtained by running our Algorithm~\ref{alg:PD} for 200 iterations as the change of iterates are almost same after 200 iterations which are shown in Figure~\ref{fig:error for different lambda}\hyperref[fig:error for different lambda]{(b)}. 

We compared Algorithm~\ref{alg:PD} and Douglas--Rachford algorithm in product space discussed in \eqref{product space DR} applied to \eqref{monotone inclusion n=2*}. For this comparison, we took four test images~\cite{cvg2023} with different sizes shown in Figure \ref{fig:restored lena shapman}. Table~\ref{tab:comparison table 2} reports the average number of iterations, normalized distance to the solution and elapsed time to achieve the stopping criteria $$\frac{\|(\mathbf{p}^k,\mathbf{q}^k)-(\mathbf{p}^{k-1},\mathbf{q}^{k-1})\|}{m}\leq 10^{-4} $$ for 10 different instances of Gaussian noise.
\begin{figure}[h!]
\centering
    \begin{subfigure}[b]{0.23\textwidth}
    \centering
    \includegraphics[width=0.9\textwidth]{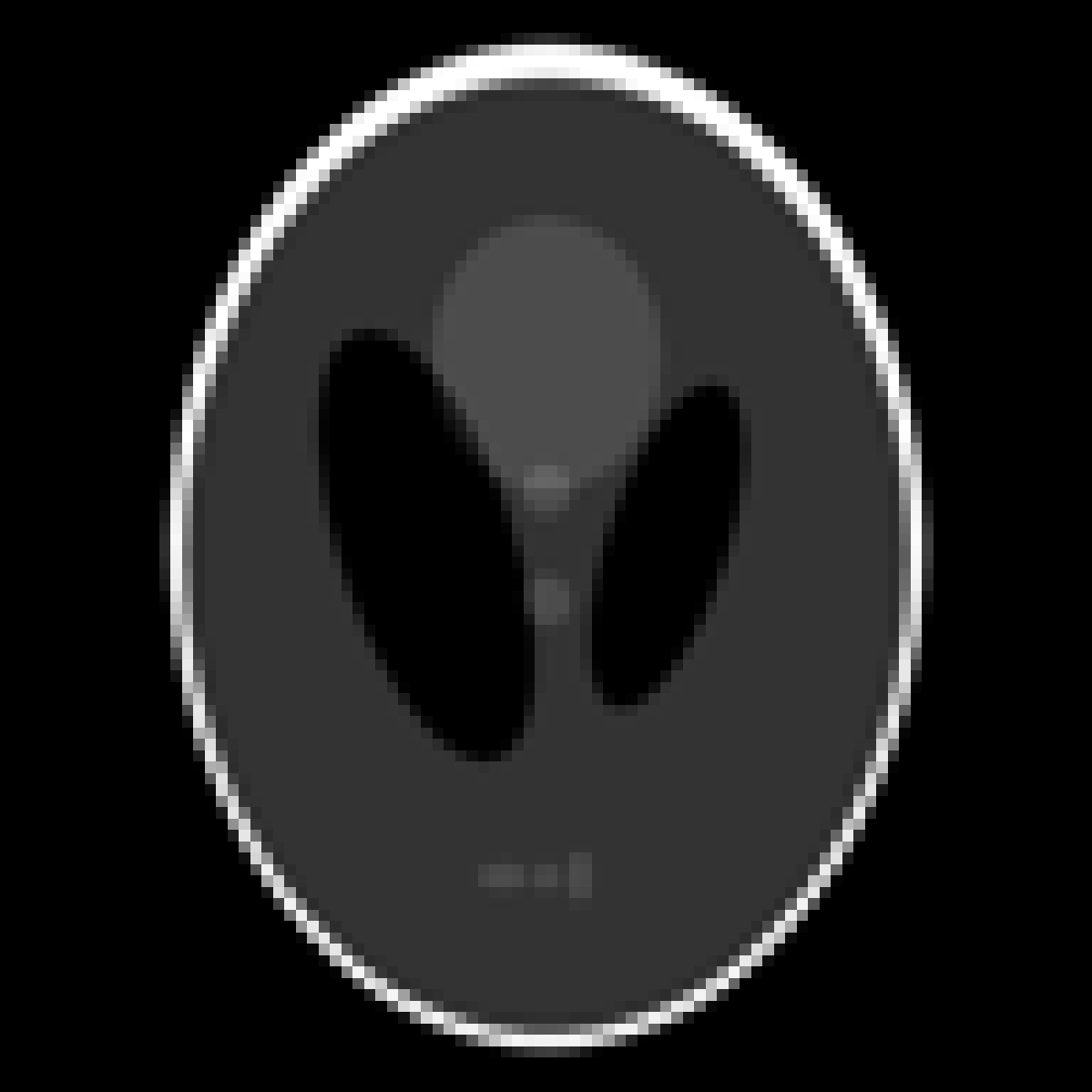}
    \end{subfigure}
    \begin{subfigure}[b]{0.23\textwidth}
    \centering
    \includegraphics[width=0.9\textwidth]{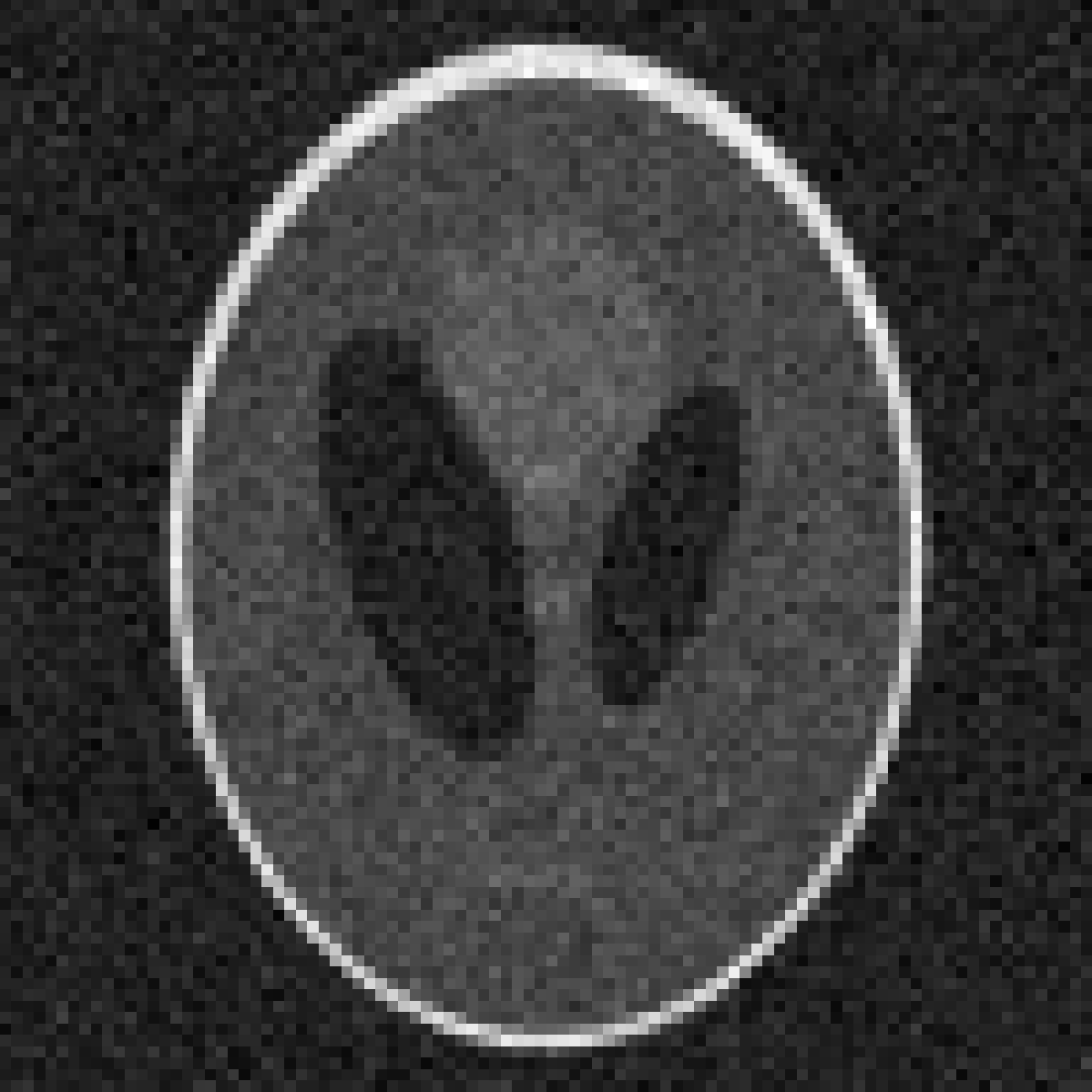}
    \end{subfigure}
    \begin{subfigure}[b]{0.23\textwidth}
    \centering
    \includegraphics[width=0.9\textwidth]{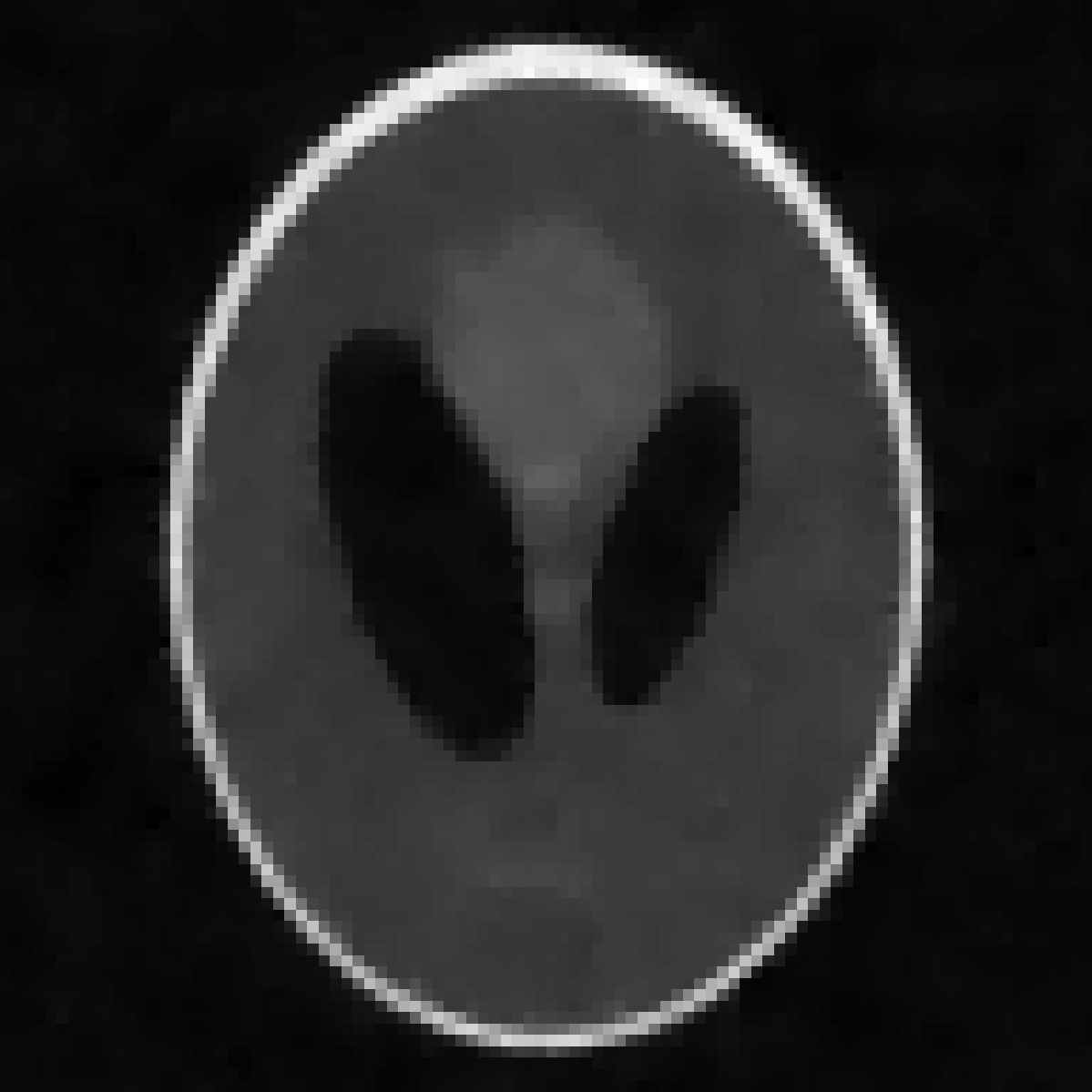}
    \end{subfigure}
    \begin{subfigure}[b]{0.23\textwidth}
    \centering
    \includegraphics[width=0.9\textwidth]{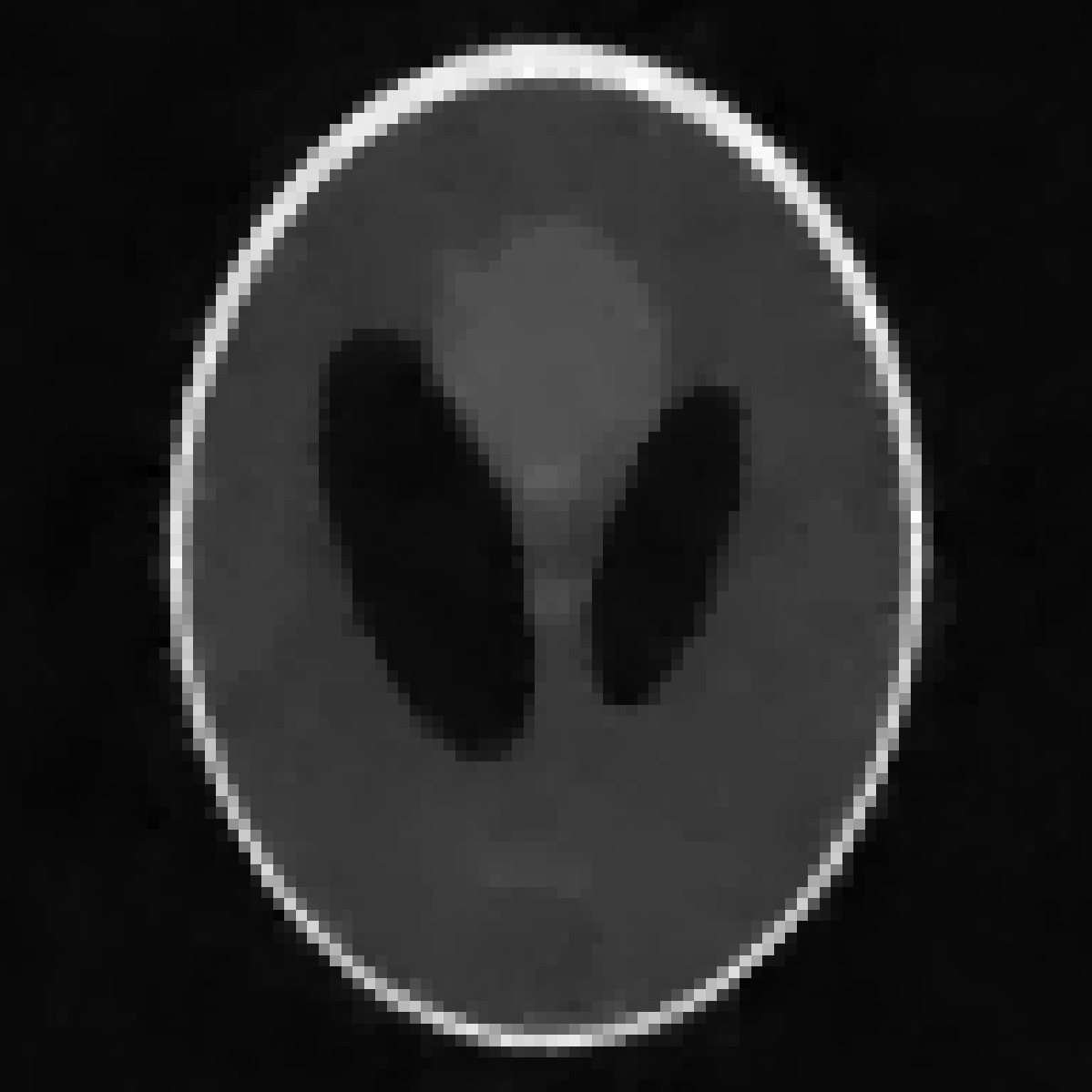}
    \end{subfigure}
    \begin{subfigure}[b]{0.23\textwidth}
    \centering
    \includegraphics[width=.9\textwidth]{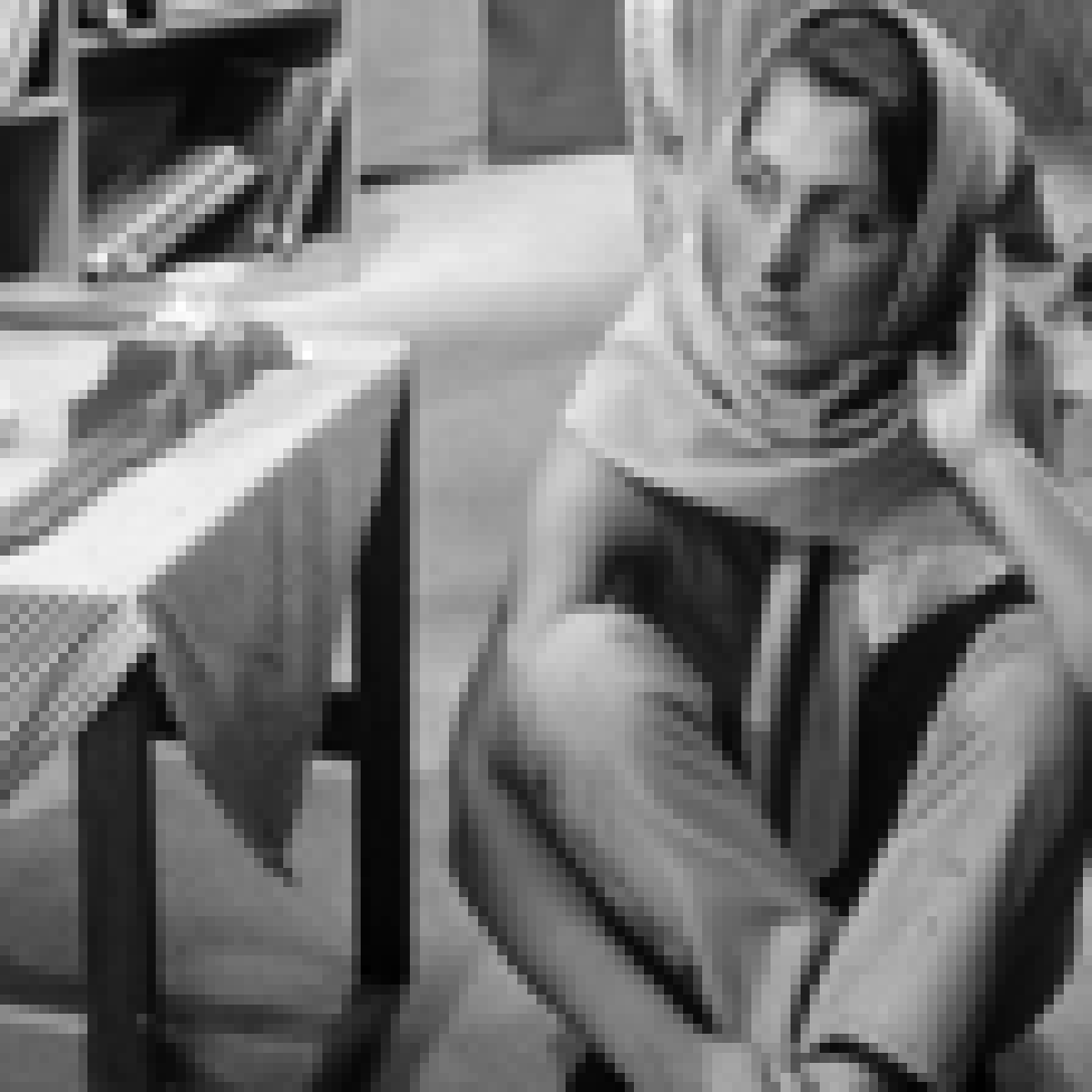}
    \end{subfigure}
    \begin{subfigure}[b]{0.23\textwidth}
    \centering
    \includegraphics[width=.9\textwidth]{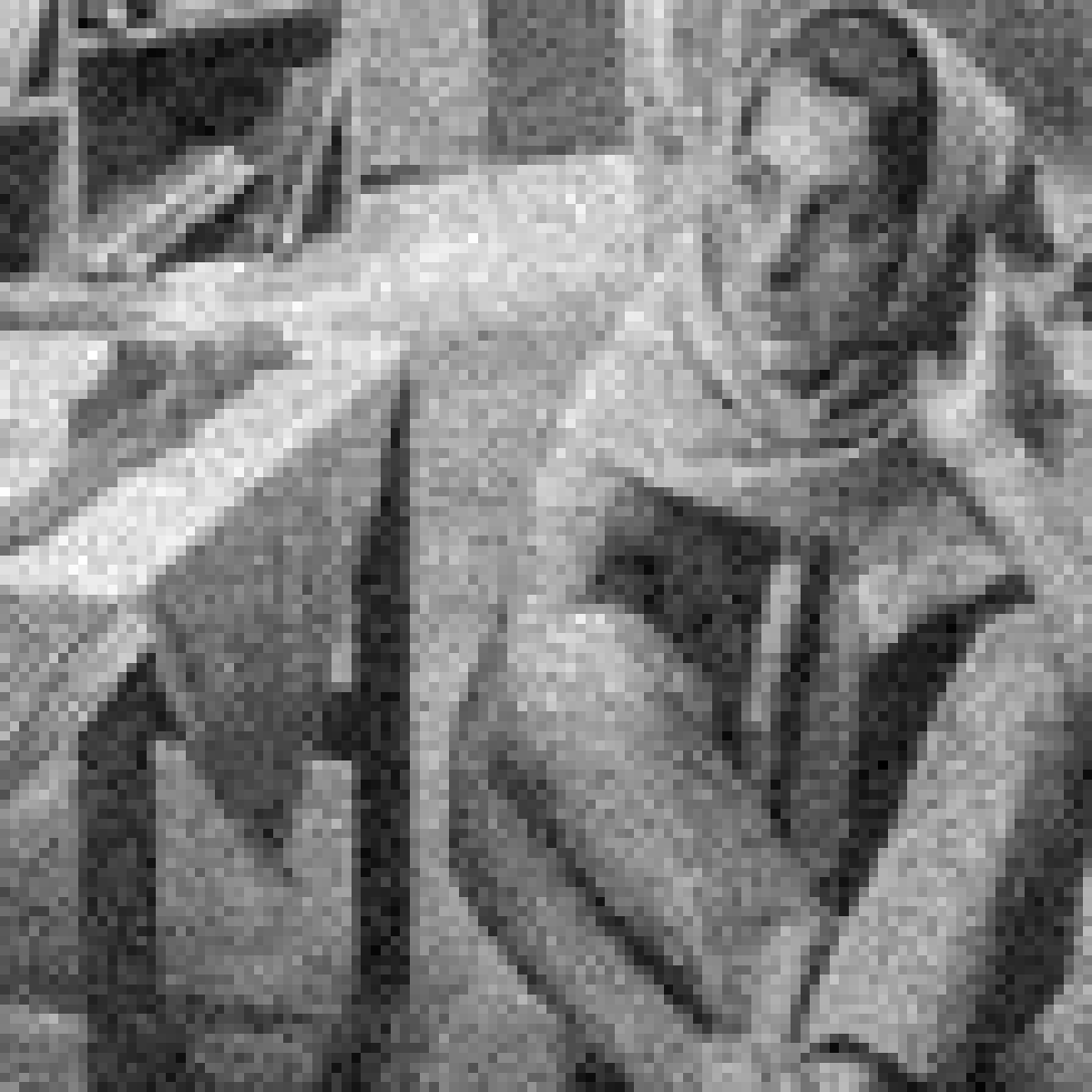}
    \end{subfigure}
    \begin{subfigure}{0.23\textwidth}
    \centering
    \includegraphics[width=.9\textwidth]{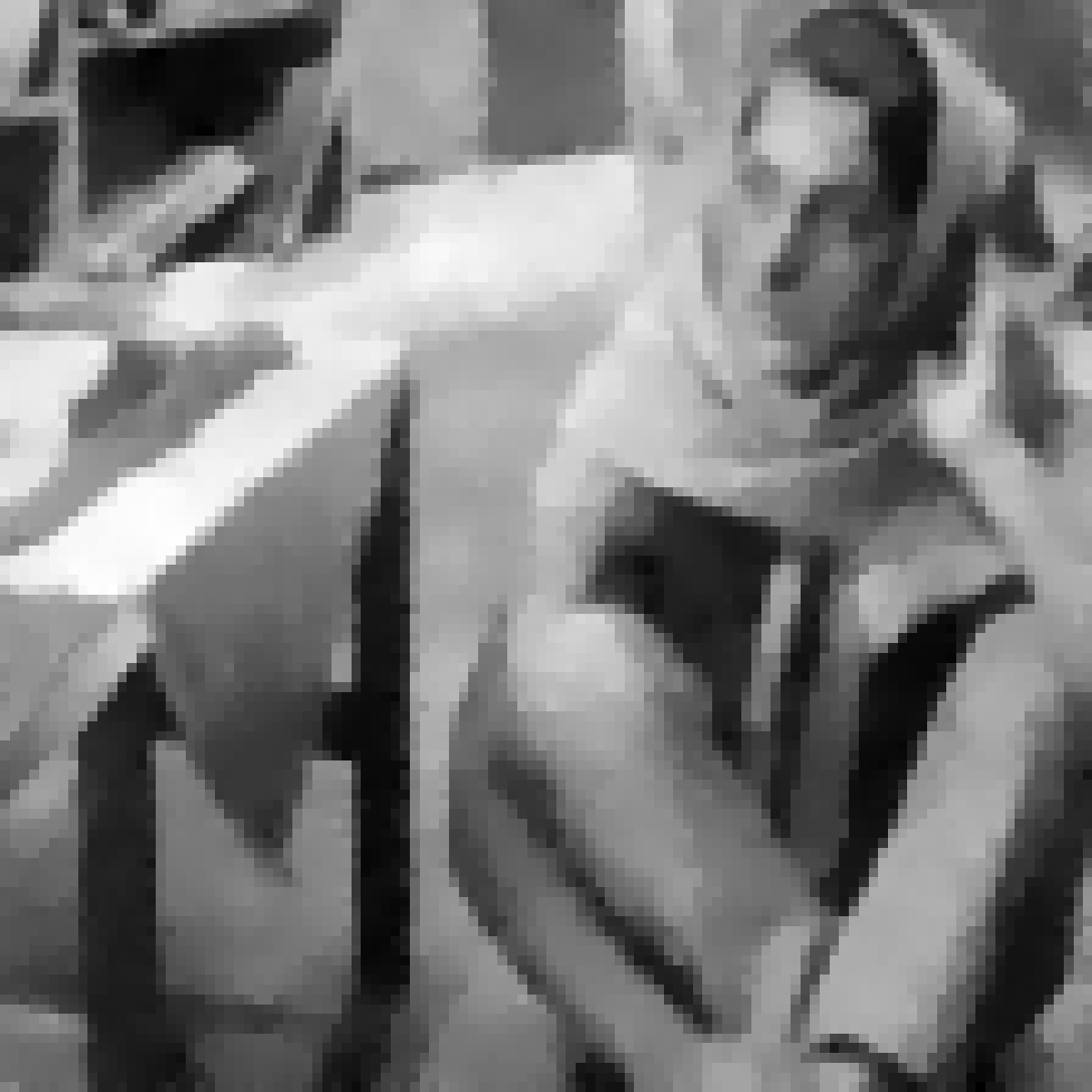}
    \end{subfigure}
    \begin{subfigure}[b]{0.23\textwidth}
    \centering
    \includegraphics[width=.9\textwidth]{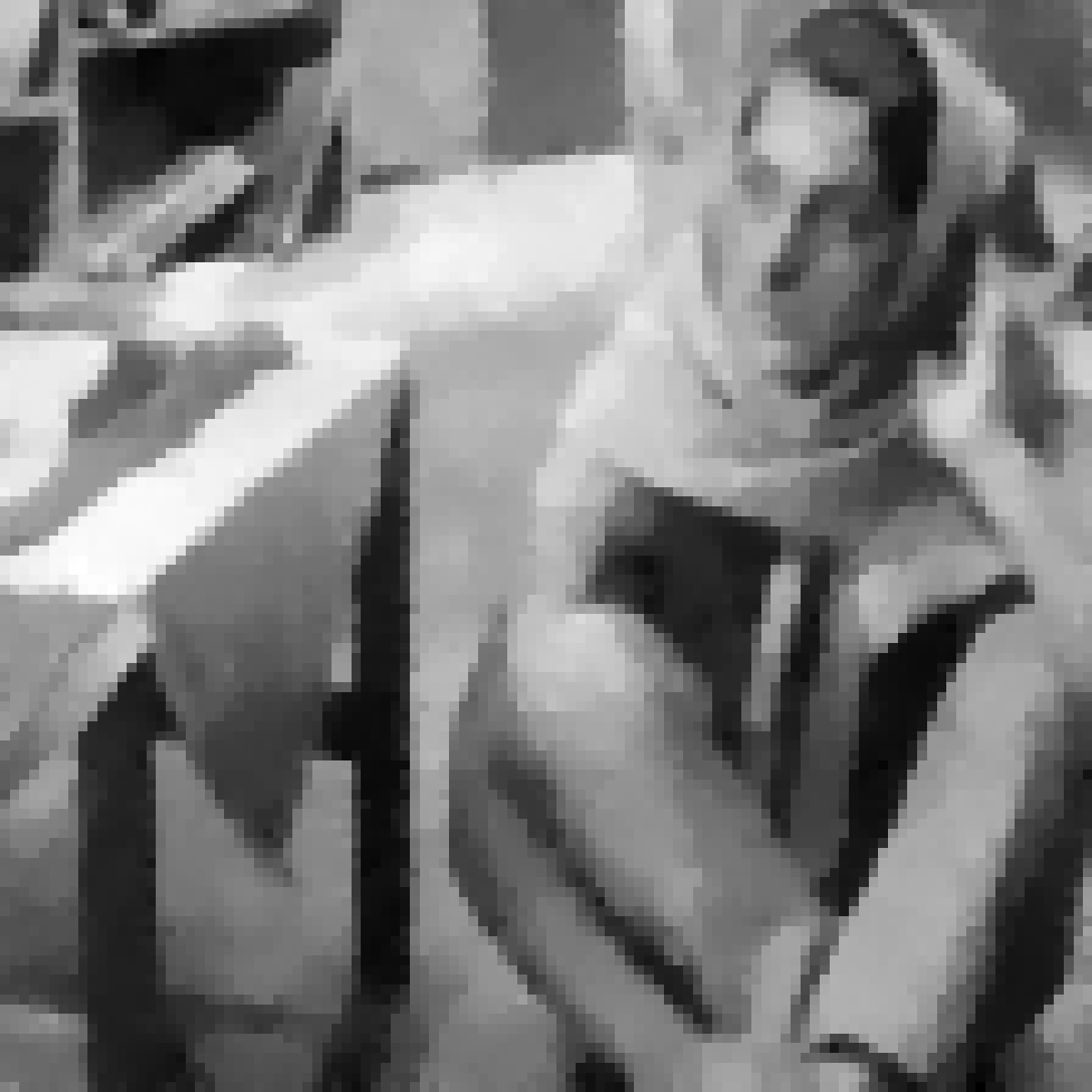}
    \end{subfigure}
    \begin{subfigure}[b]{0.23\textwidth}
    \centering
    \includegraphics[width=.9\textwidth]{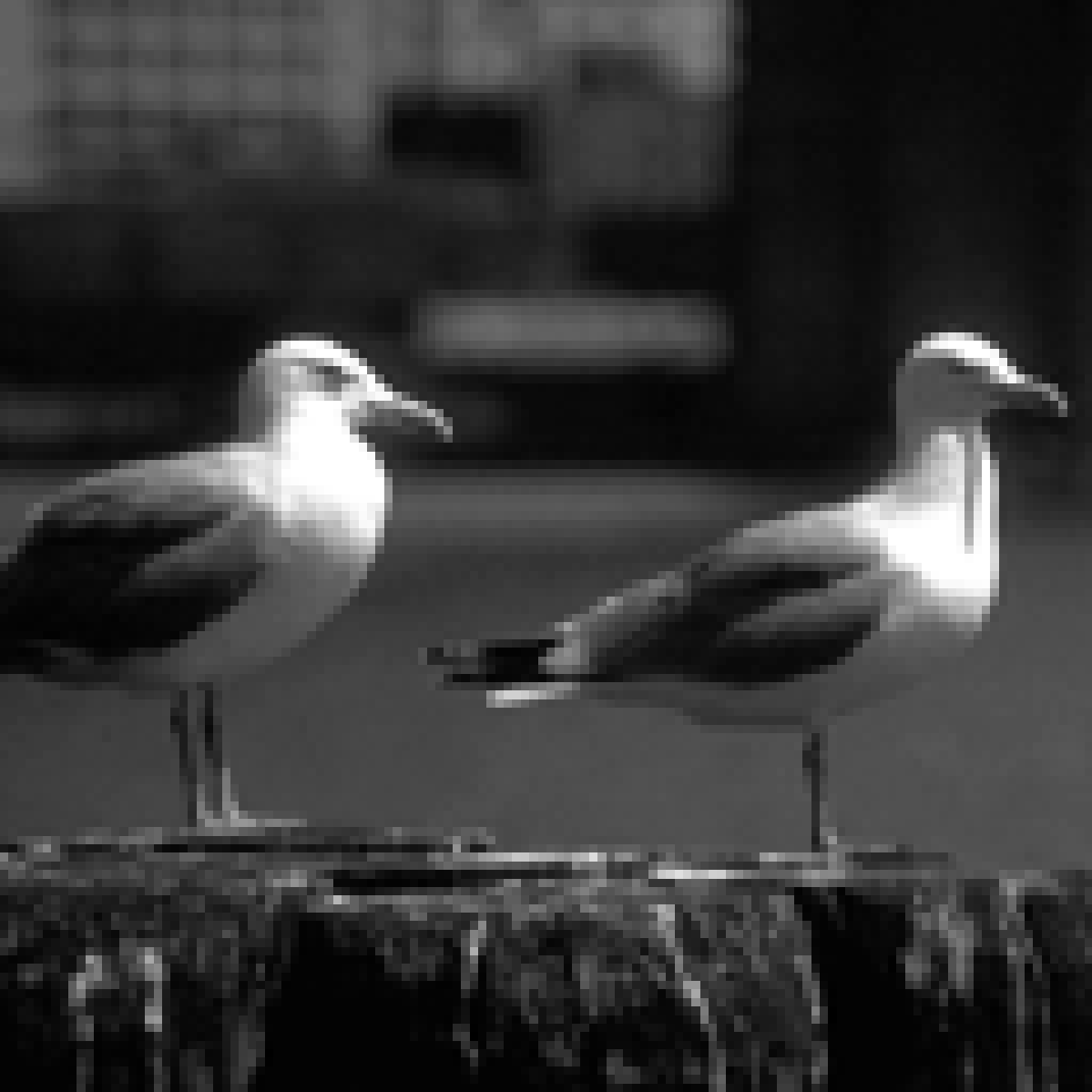}
    \end{subfigure}
    \begin{subfigure}[b]{0.23\textwidth}
    \centering
    \includegraphics[width=.9\textwidth]{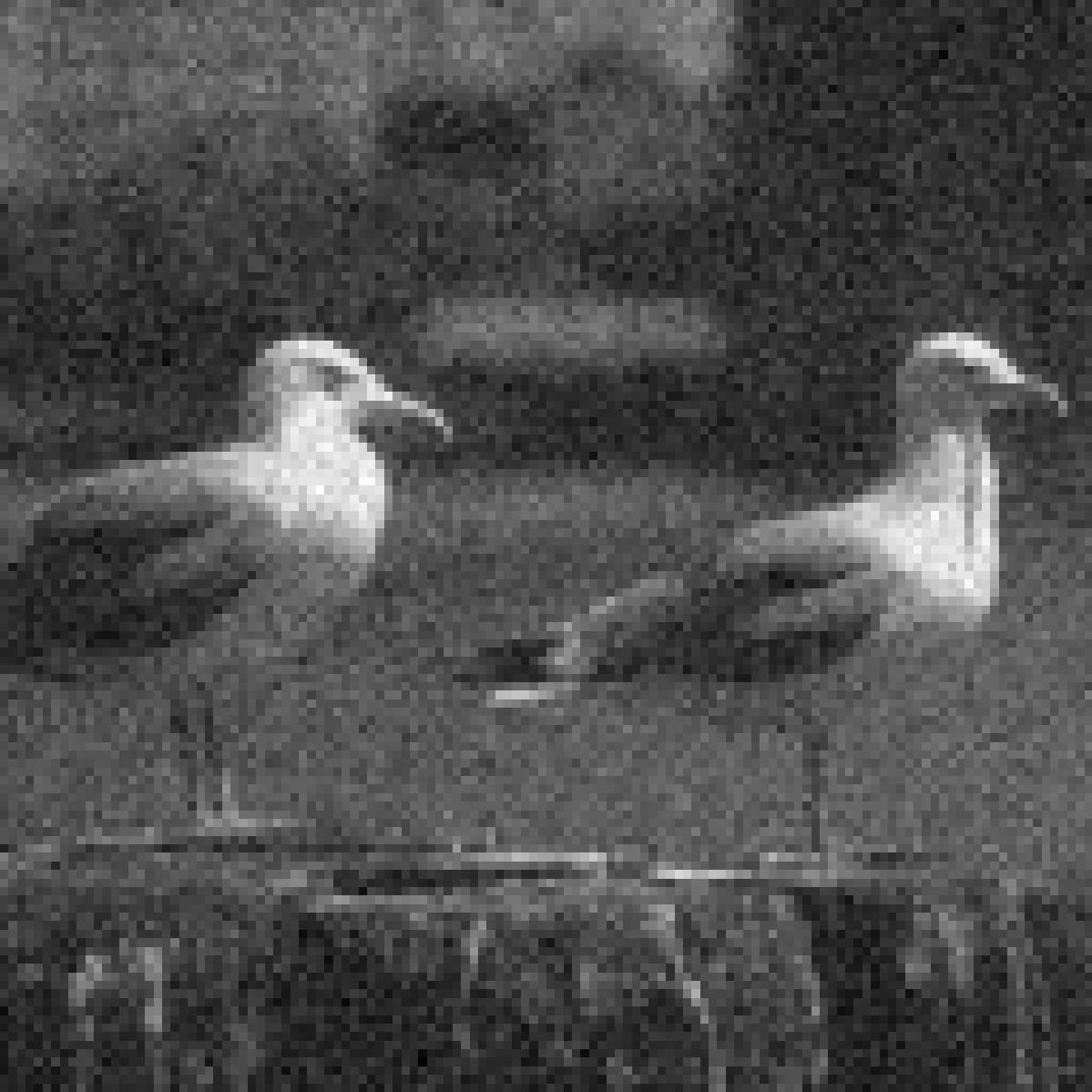}
    \end{subfigure}
    \begin{subfigure}[b]{0.23\textwidth}
    \centering
    \includegraphics[width=.9\textwidth]{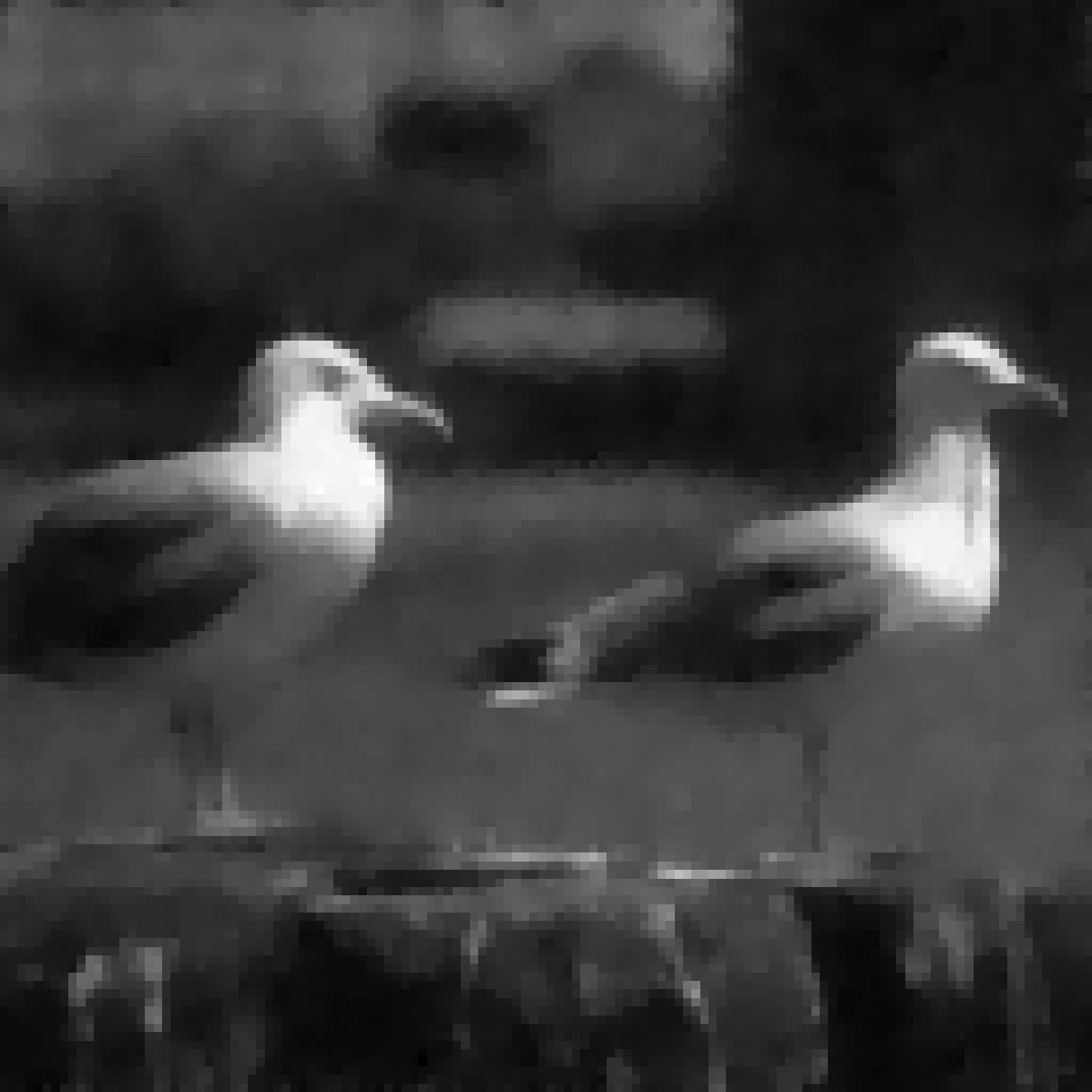}
    \end{subfigure}
    \begin{subfigure}[b]{0.23\textwidth}
    \centering
    \includegraphics[width=.9\textwidth]{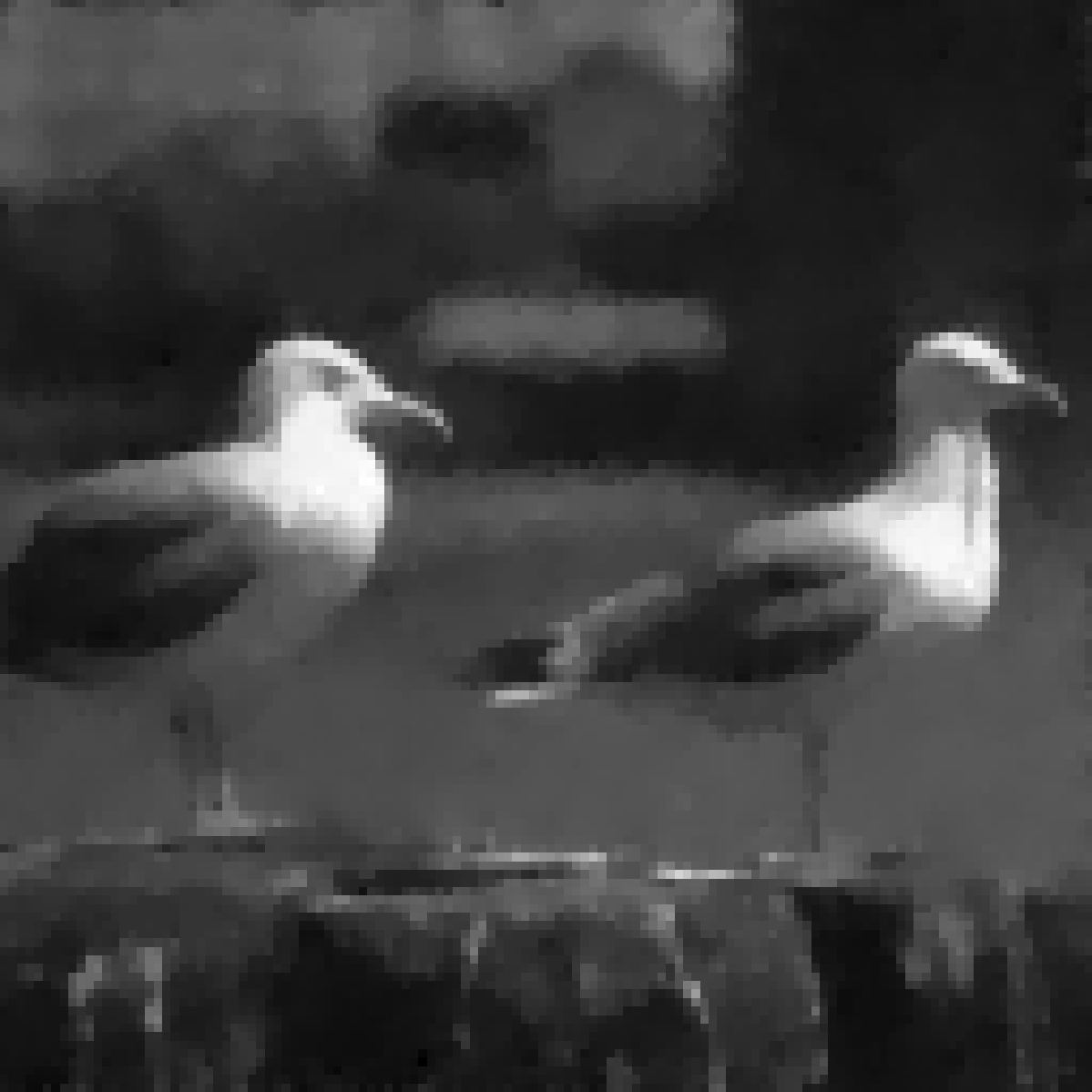}
    \end{subfigure}
    \begin{subfigure}[b]{0.23\textwidth}
    \centering
    \includegraphics[width=0.9\textwidth]{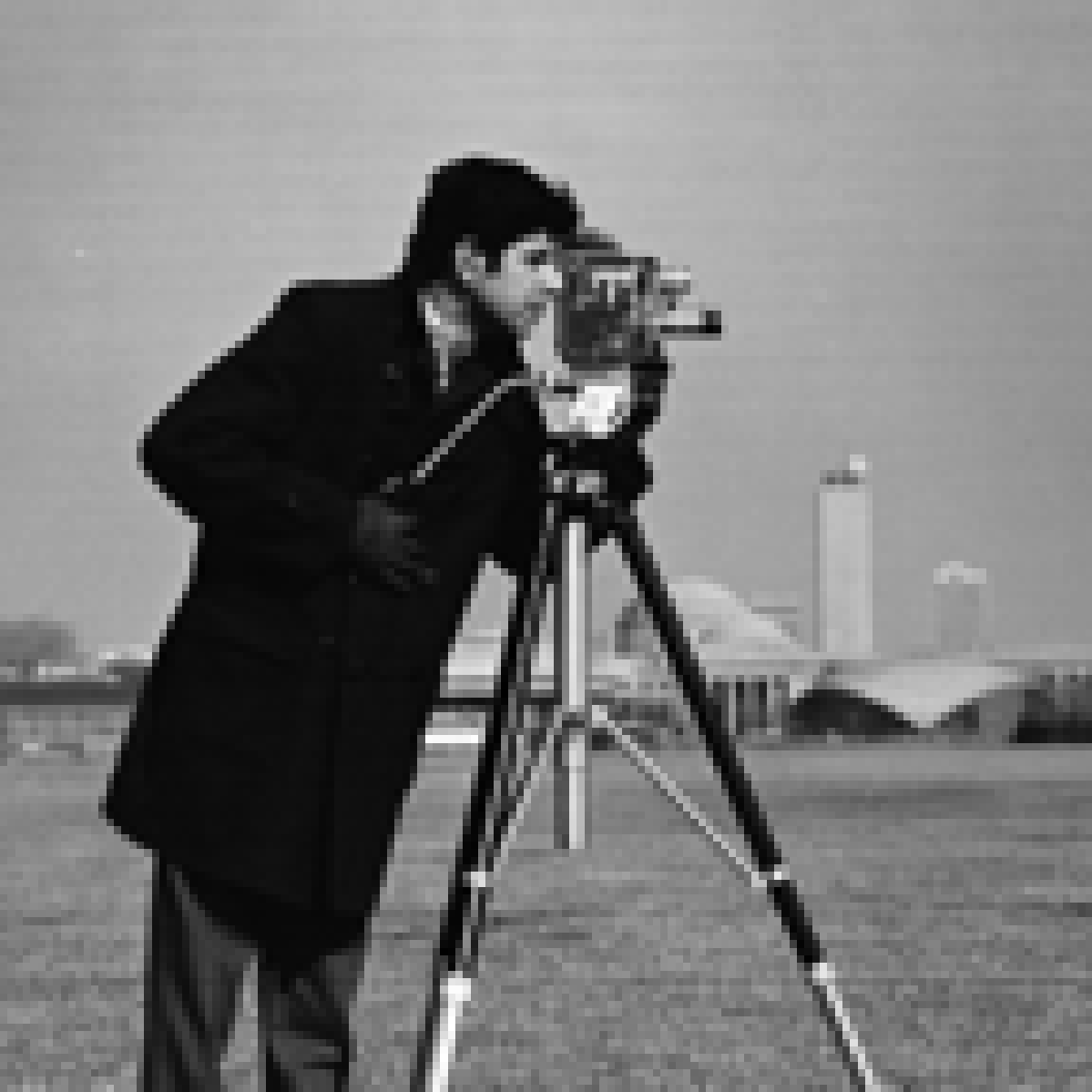}
    \caption{Original}
    \end{subfigure}
    \begin{subfigure}[b]{0.23\textwidth}
    \centering
    \includegraphics[width=0.9\textwidth]{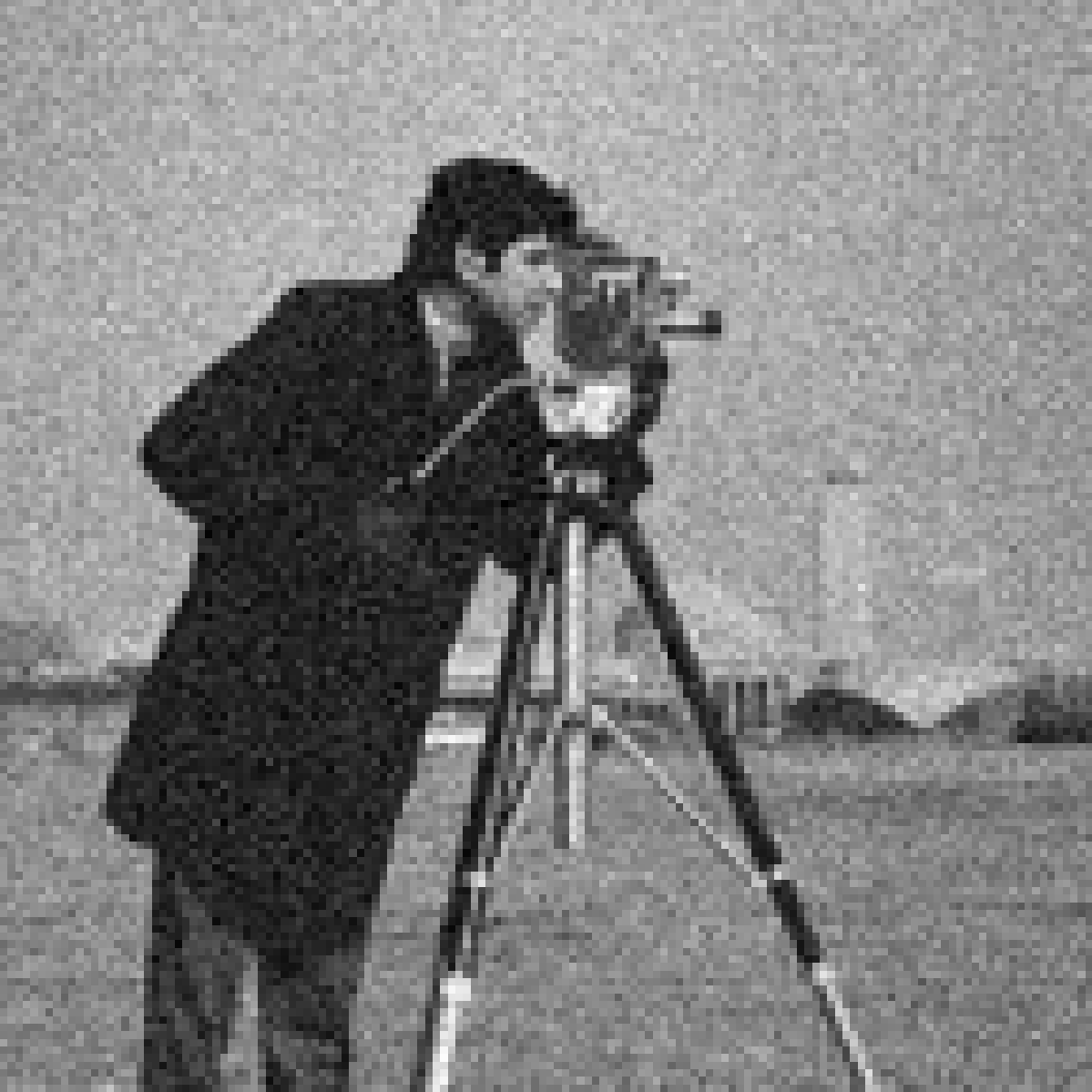}
     \caption{Noisy}
    \end{subfigure}
    \begin{subfigure}[b]{0.23\textwidth}
    \centering
    \includegraphics[width=0.9\textwidth]{images/cam_Denoised_image_la1_0.01.png}
    \caption{Restored (Algorithm~\ref{alg:PD})}
    \end{subfigure}
    \begin{subfigure}[b]{0.23\textwidth}
    \centering
    \includegraphics[width=0.9\textwidth]{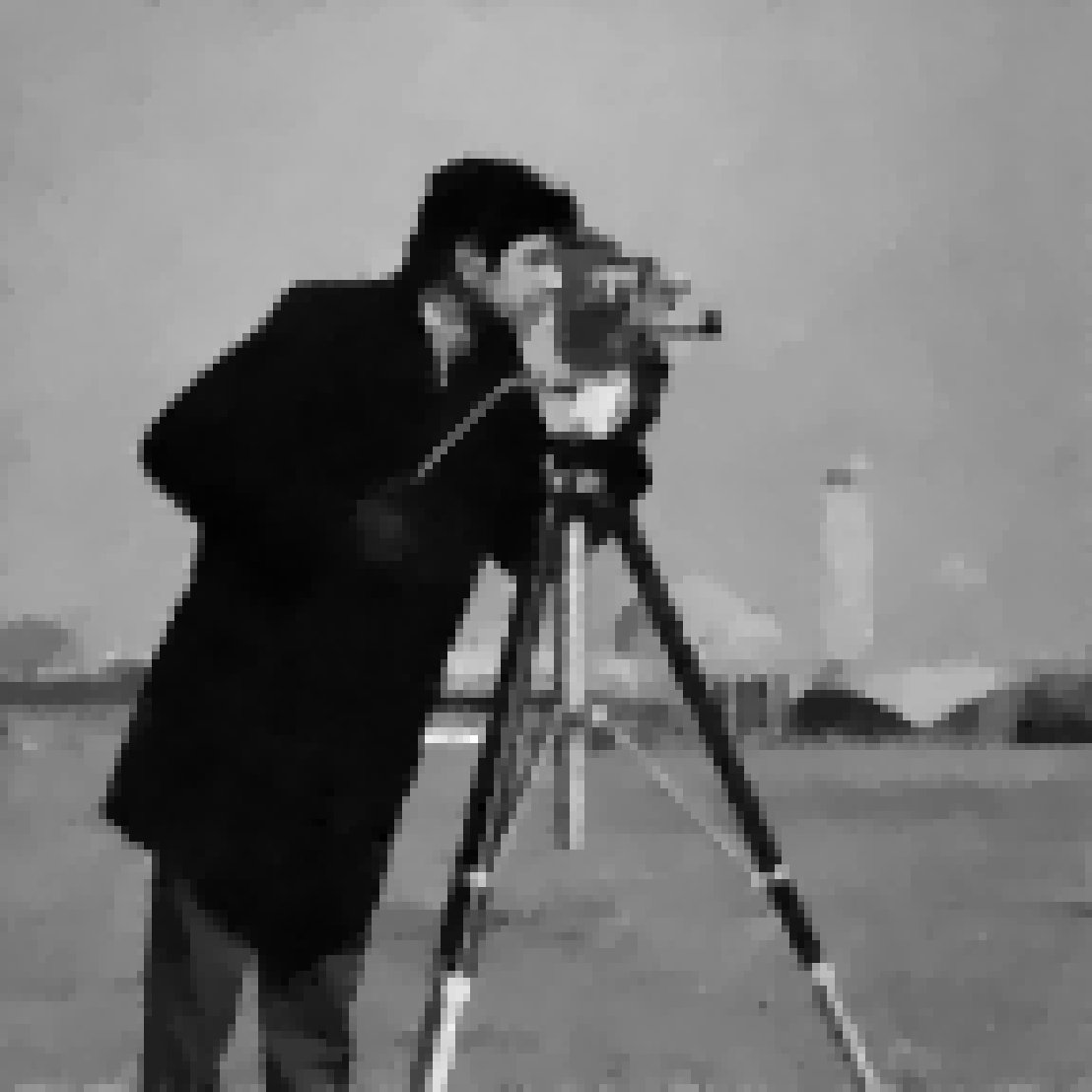}
    \caption{Restored (DR)}
    \end{subfigure}
    \caption{Original, noisy and denoised images for Algorithm~\ref{alg:PD} and Douglas--Rachford algorithm in product space.}
    \label{fig:restored lena shapman}
\end{figure}
\begin{table}[h!]
    \centering
    \renewcommand{\arraystretch}{1.2}
     \caption{Summary of the computational results for ten random instances.}
    \begin{tabular}{ |c|c|c|c|c|c|c|c|c| } 
\hline
 Image& Size ($M$) &\multicolumn{2}{c|}{\textbf{$k$}} & \multicolumn{2}{c|}{\textbf{$\frac{\|(\mathbf{p}^k,\mathbf{q}^k)-(\mathbf{p}^*,\mathbf{q}^*)\|}{m}$}}& \multicolumn{2}{c|}{Time (s)} \\ \cline{3-8}
          &          & Alg~\ref{alg:PD} & DR & Alg~\ref{alg:PD} & DR  &Alg~\ref{alg:PD} & DR\\
\hline
\multirow{4}{0.5em}\\Shepp-Logan phantom& 96 &9&26&$3.83\times 10^{-5}$&$2.71\times 10^{-5}$&5.10&13.98\\ 
Barbara & 112 &10&27& $2.49\times 10^{-5}$&$2.08\times 10^{-5}$&5.84&14.68\\ 
Bird & 128  &8&22& $2.82\times 10^{-5}$&$2.05\times 10^{-5}$&4.81&12.09\\
Cameraman& 144  &10&22&$2.11\times 10^{-5}$&$2.13\times 10^{-5}$&5.88&12.54\\
\hline
\end{tabular}
    \label{tab:comparison table 2}
\end{table}

The normalized distance to the solution $\frac{\|(\mathbf{p}^k,\mathbf{q}^k)-(\mathbf{p}^{*},\mathbf{q}^{*})\|}{m}$ was plotted against $100$ iterations for both algorithms across the four test images showed in Figure \ref{fig:error graph}. For Algorithm~\ref{alg:PD}, Corollary~\ref{corollary primal dual linear convergence} guarantees the linear convergence which have shown in Figure~\ref{fig:error graph}. However, convergence behavior of DR algorithm does not clearly show linear convergence, as seen in the same figures. Also, Algorithm~\ref{alg:PD} performs better than Douglas--Rachford algorithm on these instances in terms of iterations, normalized distance to the solution and time based on Table~\ref{tab:comparison table 2} and Figure~\ref{fig:error graph}.

\begin{figure}[h!]
    \begin{subfigure}[b]{0.5\textwidth}
    
    \includegraphics[width=0.9\textwidth]{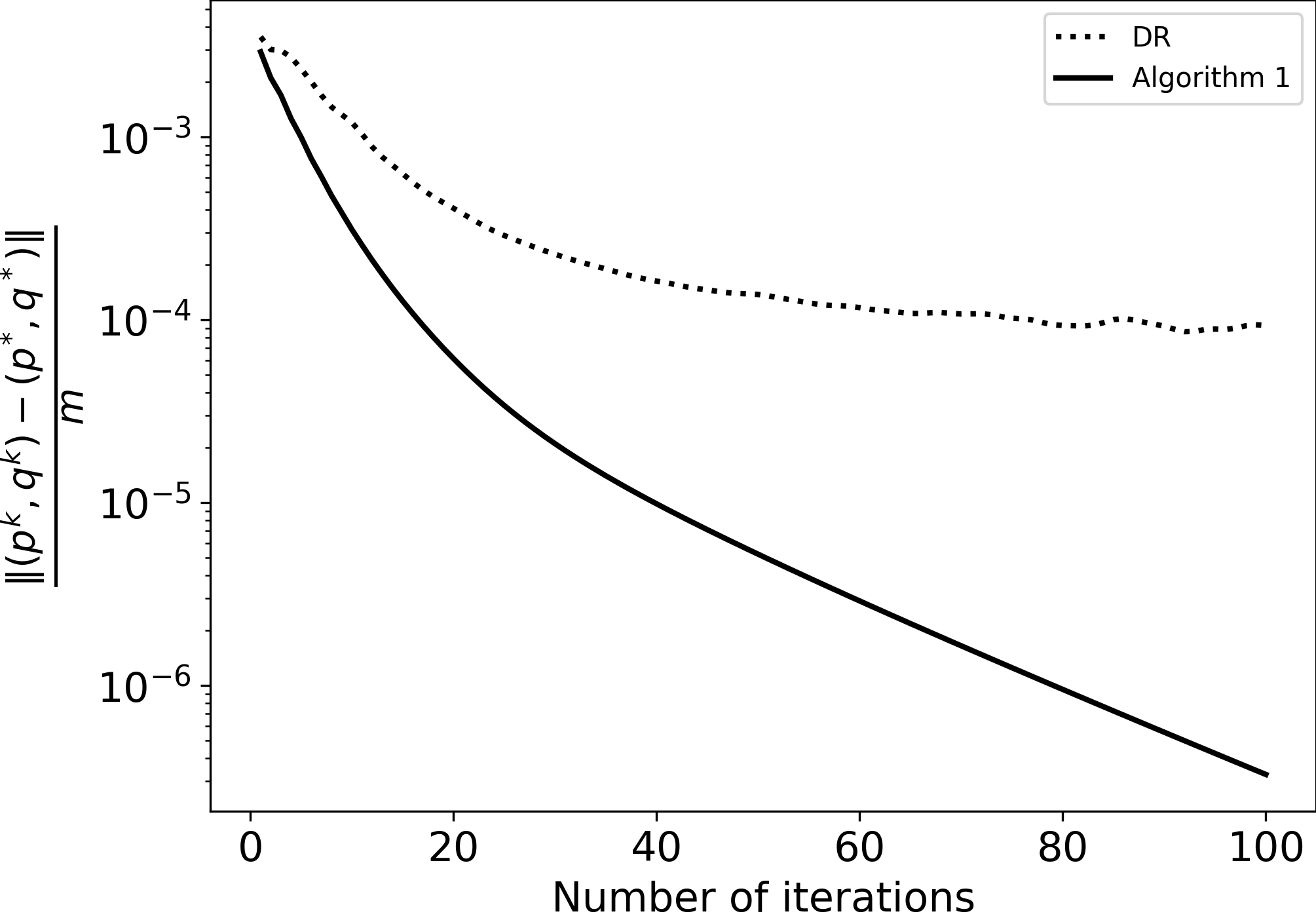}
    \caption{Shepp-Logan phantom}
    \end{subfigure}
    \begin{subfigure}[b]{0.5\textwidth}
    \includegraphics[width=0.9\textwidth]{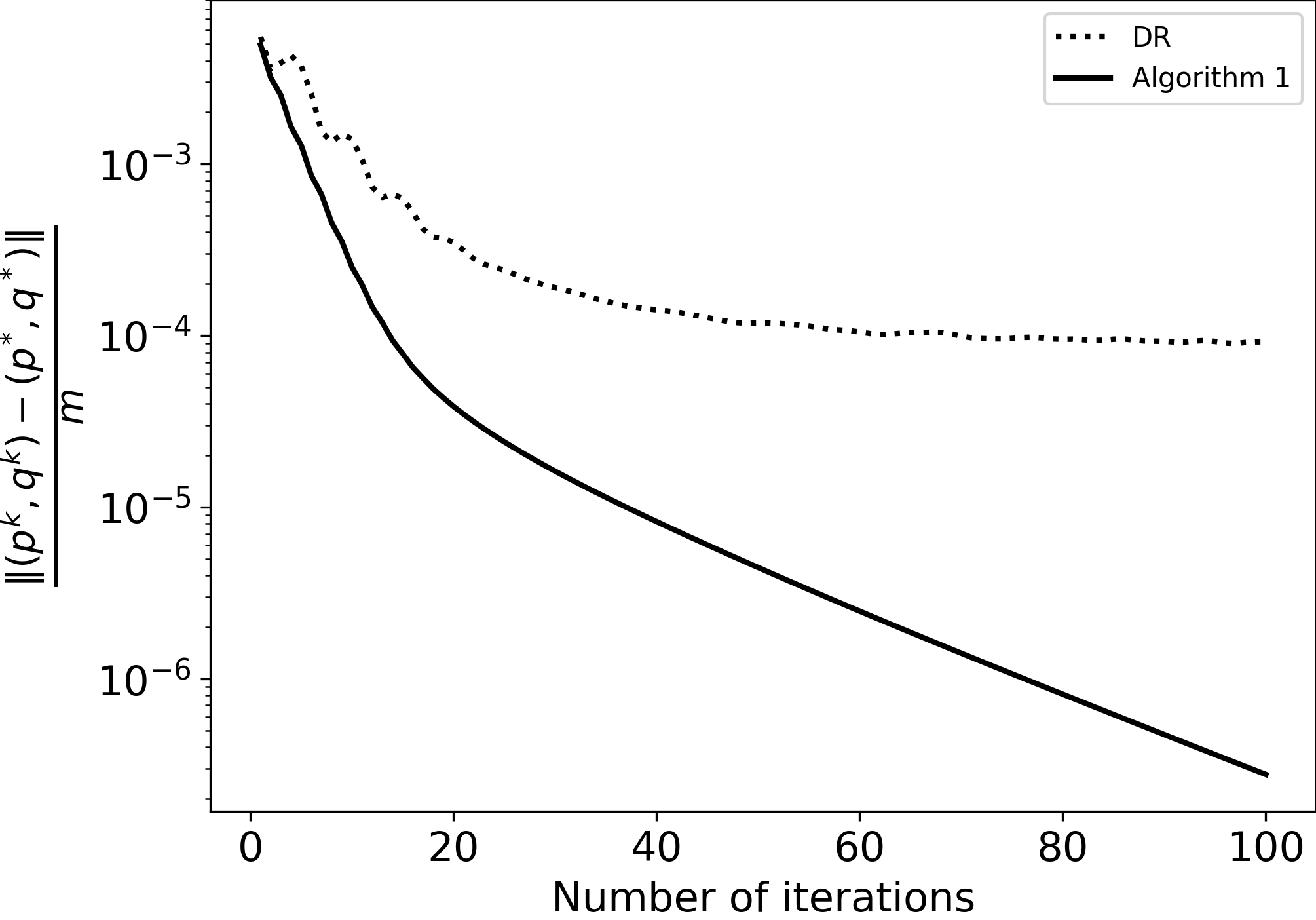}
    \caption{Barbara}
    \end{subfigure}
    \begin{subfigure}[b]{0.5\textwidth}
    \includegraphics[width=0.9\textwidth]{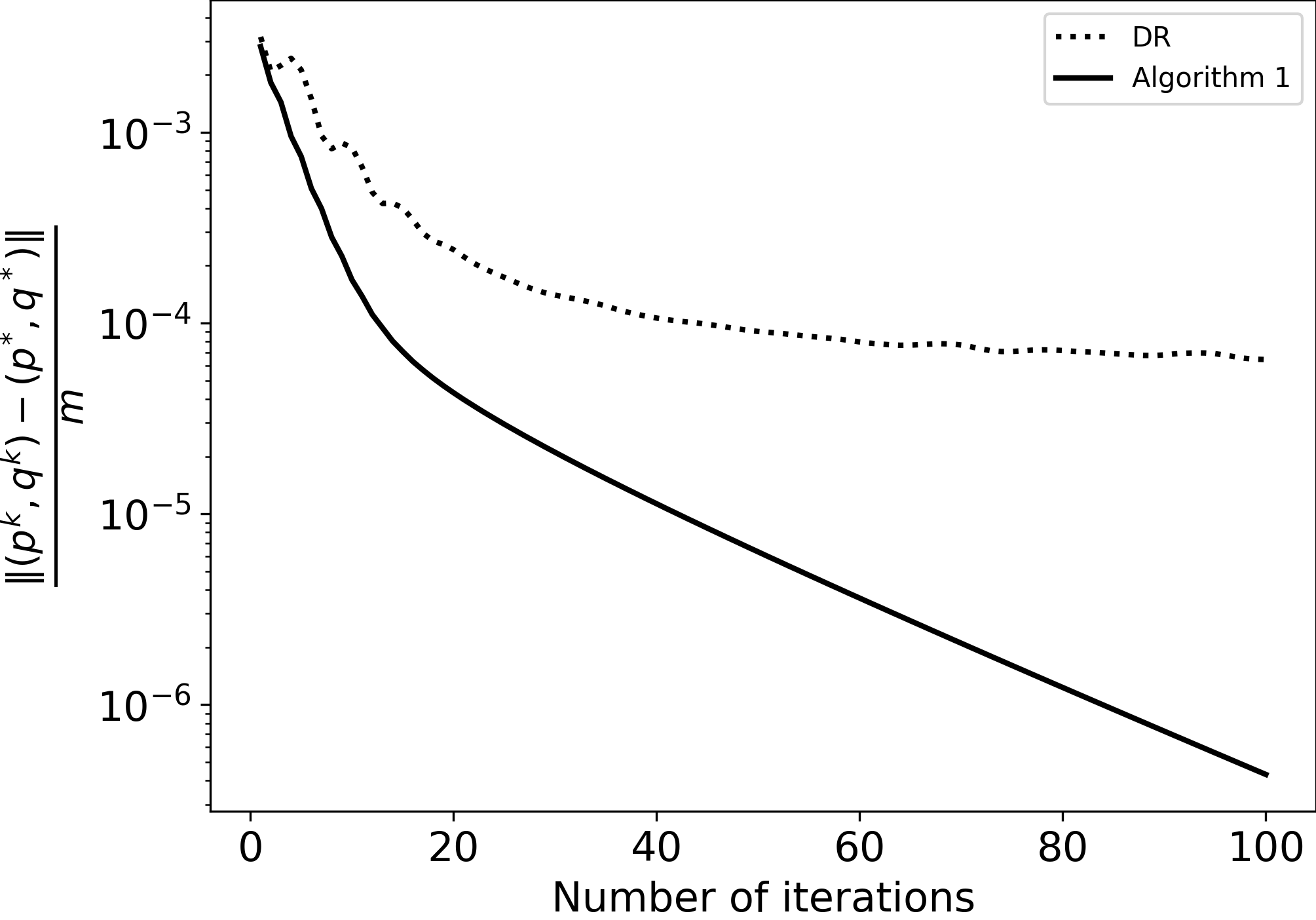}
    \caption{Bird}
    \end{subfigure}
    \begin{subfigure}[b]{0.5\textwidth}
    \includegraphics[width=0.9\textwidth]{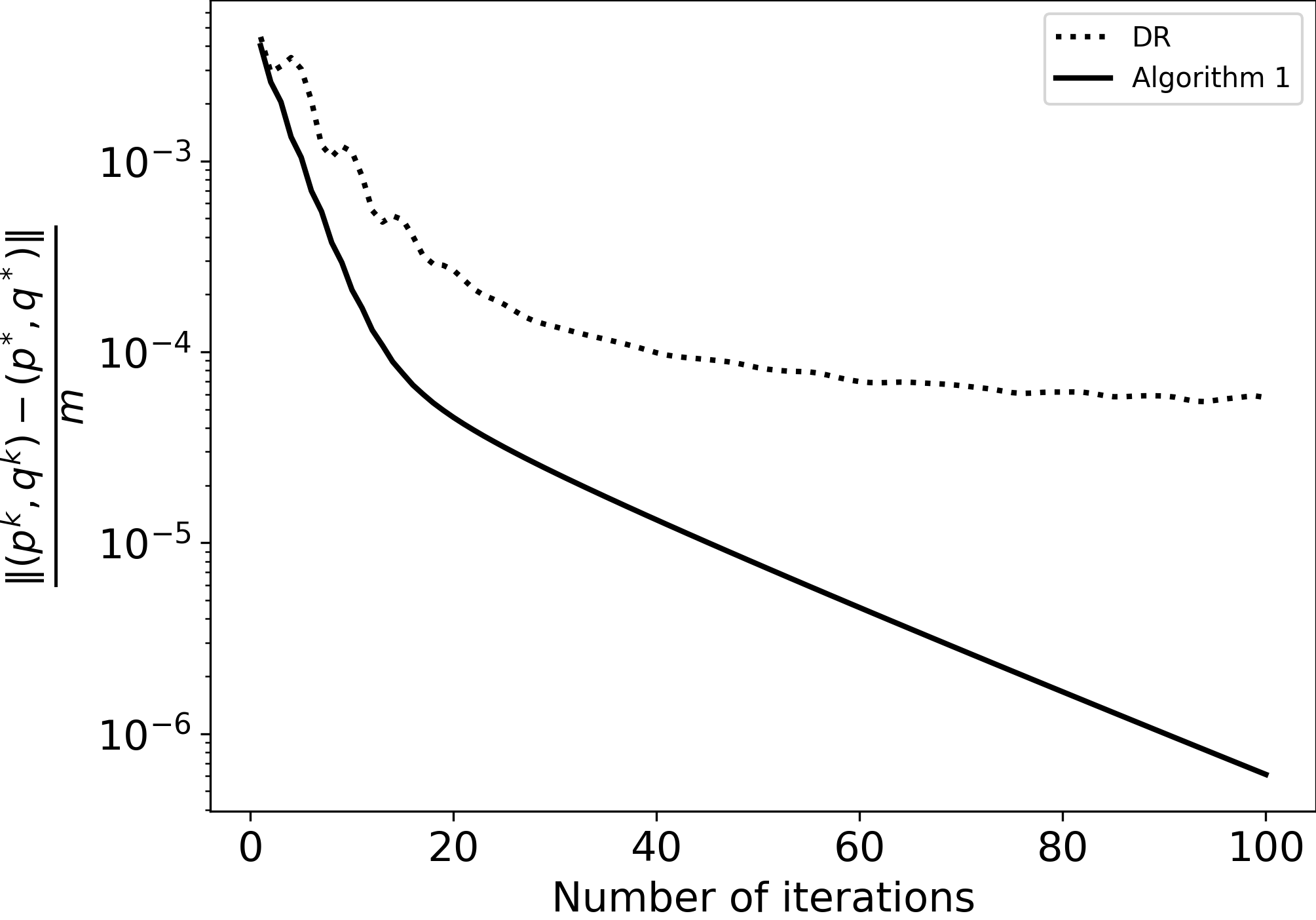}
     \caption{Cameraman}
    \end{subfigure}
    \caption{Relative error vs iterations of Algorithm~\ref{alg:PD} and DR algorithm for (a) ``Shepp-Logan phantom" (b) ``Barbara", (c) ``Bird", and (d) ``Cameraman" images.}
    \label{fig:error graph}
\end{figure}
\section{Conclusions}\label{s: conclusions}
 In this work, we established linear convergence of the resolvent splitting algorithm due to Malitsky and Tam~\cite{malitsky2023resolvent} for finding a zero in the sum of $n$ maximally monotone operators under two different sets of assumptions. In the first setting, the first $(n-1)$ operators are maximally monotone and Lipschitz, and the last operator is maximally strongly monotone. In the other setting, the first $(n-1)$ operators are maximally strongly monotone and Lipschitz, and the last one is maximally monotone. We then applied our result under the first set of assumptions to derive linear convergence of a primal--dual algorithm (Algorithm~\ref{alg:PD}) for a convex minimization problem involving infimal convolution, and presented experimental results in the context of image denoising. Our experiments demonstrate that Algorithm~\ref{alg:PD} achieves linear convergence. Furthermore, we conducted a comparative analysis with the Douglas--Rachford (DR) algorithm applied in the product space. For this problem, our results suggest that Algorithm~\ref{alg:PD} performs better than Douglas--Rachford algorithm in terms of both accuracy and time. 
 
 One possible direction for further research is to investigate tight linear convergence rates of Algorithm~\ref{alg:PD} similar to in \cite{giselsson2017tight}. In our proof of \ref{theorem for linear convergence}, we rely on the inequality \eqref{inequality}, which is unlikely to be tight.

\section*{Acknowledgments}
FAS was supported in part by a Research Training Program Scholarship from the Australian Commonwealth Government and the University of Melbourne.
MKT was supported in part by Australian Research Council grant DP230101749.

\section*{Disclosure of Interest}
The authors report there are no competing interests to declare.

\bibliographystyle{plain}
\bibliography{ref}

\end{document}